\setlist{leftmargin=*}
\setlist[enumerate]{label=(\arabic*)}
\def\@seccntformat#1{%
  \protect\textup{%
    \protect\@secnumfont
    \expandafter\protect\csname format#1\endcsname 
    \csname the#1\endcsname
    \protect\@secnumpunct
  }%
}
\newtheoremstyle{convenientthm}%
  {3pt}
  {3pt}
  {\itshape}
  {}
  {\bfseries}
  {.}
  {.5em}
  {\thmnumber{#2 }\thmname{#1}\thmnote{. #3\addcontentsline{toc}{subsection}{\tocsubsection {}{#2}{#1. #3}}}}
\newtheoremstyle{convenientplain}%
  {3pt}
  {3pt}
  {}
  {}
  {\bfseries}
  {.}
  {.5em}
  {\thmnumber{#2 }\thmname{#1}\thmnote{. #3\addcontentsline{toc}{subsection}{\tocsubsection {}{#2}{#1. #3}}}}
\theoremstyle{convenientthm}
\newtheorem{proposition}[subsection]{Proposition}
\newtheorem*{proposition*}{Proposition}
\newtheorem{theorem}[subsection]{Theorem}
\newtheorem*{theorem*}{Theorem}
\newtheorem{lemma}[subsection]{Lemma}
\newtheorem*{lemma*}{Lemma}
\newtheorem{corollary}[subsection]{Corollary}
\newtheorem*{corollary*}{Corollary}
\newtheorem*{conjecture*}{Conjecture}
\newtheorem{definition}[subsection]{Definition}
\newtheorem{definition*}{Definition}
\theoremstyle{convenientplain}
\newtheorem{example}[subsection]{Example}
\newtheorem{remark}[subsection]{Remark}
\newcommand{\loc}{{\rm loc}}
\newcommand{\comment}[1]{}
\newcommand{\Diff}{\operatorname{Diff}}
\newcommand{\on}{\operatorname}
\newcommand{\Id}{\operatorname{Id}}
\def\o{\,\circ\,}
\def\X{\mathfrak X}
\def\al{\alpha}
\def\be{\beta}
\def\ep{\varepsilon}
\def\et{\eta}
\def\rh{\rho}
\def\si{\sigma}
\def\ta{\tau}
\def\ph{\varphi}
\def\ps{\psi}
\def\om{\omega}
\def\Ga{\Gamma}
\def\Om{\Omega}
\def\i{^{-1}}
\def\x{\times}
\def\p{\partial}
\let\on=\operatorname
\def\L{\mathcal L}
\def\Diff{\operatorname{Diff}}
\subjclass[2020]{58B25, 58D05, 58B20}
\title{Regularity and completeness of half-Lie groups}
\author[M.~Bauer]{Martin Bauer}
\address{Florida State University, Tallahassee and University of Vienna, Vienna}
\email{bauer@math.fsu.edu}
\author[P.~Harms]{Philipp Harms}
\address{Research Industrial Systems Engineering (RISE) Schweiz AG, 3053 Münchenbuchsee,  Switzerland}
\email{philipp.harms@rise-world.com}
\author[P.~W.~Michor]{Peter W.~Michor}
\address{University of Vienna, Vienna}
\email{peter.michor@univie.ac.at}
\begin{document}
\maketitle
\begin{abstract}
Half Lie groups exist only in infinite dimensions: They are smooth manifolds and topological groups such that right translations are smooth, but left translations are merely required to be continuous.
The main examples are groups of $H^s$ or $C^k$ diffeomorphisms and semidirect products of a Lie group with kernel an infinite dimensional representation space. Here, we investigate mainly Banach half-Lie groups, the groups of their $C^k$-elements, extensions, and right invariant strong Riemannian metrics on them: surprisingly the full Hopf--Rinow theorem holds, which is wrong in general even for Hilbert manifolds. 
\end{abstract}

\setcounter{tocdepth}{1}
\tableofcontents

\section{Introduction}

Infinite-dimensional Riemannian geometry can be traced back all the way to the birthplace of Riemannian geometry, Riemann's Habilitationsschrift \cite{riemann2016hypotheses}, in which he already mentioned the potential need of considering infinite-dimensional manifolds. Later on, these proved to be central in several fields, including mathematical hydrodynamics in Arnold's geometric picture \cite{arnold1966, ebin1970groups} and functional data and shape analysis~\cite{younes2019shapes, srivastava2016functional, bauer2014overview}. Motivated by these applications, a significant amount of work has been dedicated to studying theoretical properties of infinite-dimensional geometric spaces. This revealed several astonishing phenomena, where well-known results of finite-dimensional Riemannian geometry cease to hold in infinite dimensions. Some notable examples are the non-existence of Christoffel symbols~\cite{bauer2014homogenous}, vanishing geodesic distance~\cite{eliashberg1993bi, michor2005vanishing, jerrard2019vanishing, bauer2020vanishing}, and the failure of the theorem of Hopf--Rinow~\cite{atkin1975hopf, ekeland1978hopf, atkin1997geodesic}. 

We consider a special class of infinite-dimensional manifolds, namely, half-Lie groups. These are topological groups and smooth manifolds such that all right translations are smooth. Alternatively, by considering the group of inverses, one may require all left translations to be smooth. (Throughout this article, smooth means $C^\infty$.) Banach half-Lie groups are special cases of one-sided differentiable groups, which have been studied in the context of Hilbert's 5th problem  by Birkhoff~\cite{birkhoff1938analytical} and Enflo~\cite{enflo1969topological}; see also~\cite{benyamini1998geometric}. The most important examples of half-Lie groups, also in the context of the above-mentioned applications in fluid dynamics and functional data or shape analysis, are groups of $C^k$ or $H^s$ diffeomorphisms, as first studied by Eells, Eliasson and Palais~\cite{Eells66, Eliasson67, Palais68}. More exotic diffeomorphism groups were studied in \cite{KMR14}, where also the name half-Lie group was coined. Another vast class of examples of half-Lie groups are semidirect products of Lie groups with representation spaces. Recently, Marquis and Neeb~\cite{MarquisNeeb18} studied Lie-theoretic properties of such semidirect products, namely, regularity properties and subspaces of differentiable vectors. We continue their investigations by studying general half-Lie groups, which are not necessarily semidirect products, from both Lie-theoretic and Riemannian perspectives.

The group structure and differentiable structure of a half-Lie group are by definition only partially compatible. On the positive side, the partial compatibility explains why half-Lie groups enjoy better regularity and completeness properties than general infinite-dimensional manifolds, and we show several results in this direction. On the negative side, the partial incompatibility explains some of the problems in infinite-dimensional group and representation theory. For example, the canonical trivialization of the tangent bundle of a half-Lie group may be non-smooth and even discontinuous. Consequently, the Eulerian and Lagrangian coordinates are incompatible, and the Lie bracket is only partially defined. Somewhat surprisingly, many half-Lie groups carry an additional geometric structure, which is compatible with both the group and differentiable structure, examples being right-invariant smooth connections and right-invariant smooth Riemannian metrics. This has far-reaching implications, which we exploit in our study of half-Lie groups.
Our main contributions are as follows:
\begin{itemize}
\item {\bf Differentiable elements in half-Lie groups:} In Section~\ref{sec:differentiable}, we study the subgroup $G^k$ of all $C^k$ elements in $G$, i.e., those elements in $G$ such that left translation is $C^k$. We then show for any $k\in \mathbb N$ that the set $G^k$ is a regular half-Lie group and that the inverse limit $k\to \infty$ is a regular Fr\'echet Lie group. This generalizes previous results of Marquis and Neeb~\cite{MarquisNeeb18}, who studied related questions for the case that $G$ is a semidirect product. In \cref{thm:semidirect} we discuss how our result strengthens theirs even in this special case. 

\item {\bf Regularity of half-Lie groups:} Next, in Section~\ref{sec:regularity}, we discuss smooth regularity of half-Lie groups, i.e., the  question if one can integrate smooth curves in the tangent space at the identity to smooth curves in the group by inverting the right logarithmic derivative. We are unable to show that each Banach half-Lie group $G$ itself is regular, although all known examples are regular. We can, however, show that any group $G^k$ for $k\geq 1$ is regular.

\item {\bf Extension theory for  half-Lie groups:} 
In Section~\ref{sec:extensions}, we show that the extension theory of Lie groups largely carries over to half-Lie groups. We can describe all extensions satisfying some conditions by extension data; see \cref{thm:non-split}. We describe split extensions (semidirect products; see \cref{lem:extensions:semidirect}) and central extensions (\cref{central_extensions}), and we are able to describe the subgroups of $C^k$-elements in an extension as another extension; see \cref{thm:Ek}.  As an application of the developed theory we present in \cref{sec:examples_extensions}  two examples of extensions in the context of diffeomorphism groups of fiber bundles.

\item {\bf Completeness on half-Lie groups:} In Section~\ref{sec:riemannian}, we study Riemannian metrics on half-Lie groups. The main result of this part shows under an additional technical property that strong right-invariant Riemannian metrics on half-Lie groups satisfy all completeness statements of the theorem of Hopf-Rinow, i.e., they are geodesically and metrically complete, and there exists a minimizing geodesic between any two points. Previously, only the geodesic completeness was known \cite{gay2015geometry}.
\end{itemize}

\subsection*{Notations and conventions} For a group $G$, we denote multiplication by $\mu:G\x G\to G$ with $\mu(x,y)=x.y = \mu_x(y) = \mu^y(x)$ indicating left and right translations. The results of this paper are valid for the following two choices of categories: (1)  Lie groups are group objects in the category of convenient smooth manifolds \cite{KrieglMichor97} and topological groups are with respect to the corresponding convenient topologies, or (2) Topological groups are group objects in the category of Hausdorff topological spaces, and Lie groups are groups objects in Bastiani $C^\infty$-manifolds modelled on locally convex spaces \cite{glockner2011infinite}. Note that in general, there are more morphisms and products carry a finer topology, under choice (1) compared to choice (2). For metrizable spaces or Fr\'echet manifolds there is no difference.  

\subsection*{Acknowledgements.}
We gratefully acknowledge support in the form of a Research in Teams stipend of the Erwin Schr\"odinger Institute Vienna. MB was partially supported by NSF grants DMS-1912037 and DMS-1953244 and by FWF grant FWF-P 35813-N. PH was supported by NRF Singapore grant NRF-NRFF13-2021-0012 and by NTU Singapore grant NAP-SUG.

\section{Half-Lie groups}
\label{sec:half_lie}
We start by introducing the central objects of the present article, half-Lie groups:
\begin{definition}[Half-Lie groups]
\label{def:half_lie}
A right (left) \emph{half-Lie group} is a smooth manifold, possibly infinite dimensional, whose underlying topological space is a topological group, such that right (left) translations are smooth.
We shall speak of Hilbert, Banach, Fr\'echet, etc. half-Lie groups to designate the nature of the modeling vector space.
A \emph{homomorphism} of half-Lie groups is a smooth group homomorphism.
\end{definition}

Lie groups are both right half-Lie groups and  left half-Lie groups with smooth multiplication and inversion. 
Every finite-dimensional half-Lie group is a Lie group by a result of Segal~\cite{segal1946topological}.
Every Banach half-Lie group with uniformly continuous multiplication is already a Banach Lie group.
This can be seen as a solution of Hilbert's 5th problem in infinite dimensions, due to Birkhoff~\cite{birkhoff1938analytical} and Enflo~\cite{enflo1969topological}, see also~\cite{benyamini1998geometric}.
Marquis and Neeb \cite{MarquisNeeb18} have collected a long list of examples of half-Lie groups.
We next present two important special cases.

\begin{example}[Diffeomorphism groups]
\label{ex:diffeo}
The main motivating examples for the present investigation of half-Lie groups are diffeomorphism groups with finite regularity.
These appear naturally in shape analysis~\cite{younes2019shapes,srivastava2016functional,bauer2014overview} and mathematical fluid dynamics~\cite{arnold1998topological,ebin1970groups,kolev2017local}.
If $(M,g)$ is a finite-dimensional compact Riemannian manifold or an open Riemannian manifold of bounded geometry, then the diffeomorphism group $\Diff_{H^s}(M)$ of Sobolev regularity $s>\dim(M)/2 +1$ is a half-Lie group.
Likewise, the groups $\Diff_{W^{s,p}}(M)$ for $s>\dim(M)/p +1$ and $\Diff_{C^k}(M)$ for $1\le k <\infty$ are half-Lie groups.
However, they are not Lie groups because left multiplication is non-smooth.
\end{example}

\begin{example}[Group representations]
\label{ex:representation}
Let $\rho: G\to U(H)$ be a representation of a Banach Lie group $G$ on an infinite-dimensional Hilbert space $H$, which is continuous as a mapping $G\times H\ni (g,h)\mapsto \rho(g)h\in H$.
Then the right semidirect product $G\ltimes H$ with operations
\begin{equation}
(g_1,h_1).(g_2,h_2) = (g_1g_2, \rho(g_2^{-1})h_1 +h_2),
\qquad
(g,h)\i=(g^{-1}, -\rho(g)h)
\end{equation}
is a right half-Lie group but not a Lie group.
This class of examples has been studied in detail by Marquis and Neeb~\cite{MarquisNeeb18}.
In their work, the roles of left and right translations interchanged compared to ours, but this makes no difference as one may always pass to the group of inverses.
\end{example}

\begin{remark}[Continuity of left translations]
By definition, left translations on half-Lie groups are continuous.
This is not automatic.
Indeed, there are diffeomorphism groups where right translations are smooth, but left translations are discontinuous.
An example is the group 
\begin{align}
  &\Diff_{\mathcal B^{(M)}_\text{loc}}(\mathbb R^n) =\big\{\Id+f: f\in \mathcal B^{(M)}_{\text{loc}}(\mathbb R^n,\mathbb R^n),
    \inf_{x\in \mathbb R^n} \det(\mathbb I_n + df(x)) >0\big\}
\end{align}
modeled on a space of ultradifferentiable functions
\begin{align}
\mathcal B^{(M)}_{\text{loc}}(\mathbb R^n) := \mathcal E^{(M)}(\mathbb R^n)\cap \mathcal B(\mathbb R^n),
\end{align}
where $(M)$ is a strongly non-quasi\-analytic weight-sequence such that $M_{k+1}/M_k\nearrow \infty$ and $\mathcal E^{(M)}\supseteq \mathcal E^{\{(k!)^{1/2}\}}$, and where $\mathcal B(\mathbb R^n)$ is the Fr\'echet space of smooth functions with bounded derivatives. An example is $M_k:= (k!)^{1/2}$. See  \cite[Theorem 13.3]{KMR14} for more details.
\end{remark}

\section{Differentiable elements in half-Lie groups}
\label{sec:differentiable}

We next consider subsets of half-Lie groups where left multiplication has better differentiability properties than mere continuity.
Specifically, we consider $k$-fold Fr\'echet differentiability, denoted by $C^k$, of left multiplication.\begin{definition}[Differentiable elements]
\label{def:differentiable}
Let $G$ be a Banach right half-Lie group.
Then, $x \in G$ is called a $C^k$ element if the left translations $\mu_x,\mu_x\i:G\to G$ are $C^k$.
The set of all $C^k$ elements of $G$ is denoted by $G^k$.
\end{definition}

We do not know if the $C^k$ property of $\mu_x\i$ follows automatically from the $C^k$ property of $\mu_x$.
By the inverse function theorem, this is the case if $\mu_x$ has an invertible derivative at some (and hence any) point.
But we do not know this, so
we require $\mu_x\i$ to be $C^k$ in \cref{def:differentiable}.

Next, we will show that the set of $C^k$ elements in a Banach half-Lie group $G$ is again a Banach half-Lie group, provided that $G$ carries a right-invariant local addition.

\begin{definition}[Right-invariant local additions]
\label{def:addition}
Let $G$ be a right half-Lie group.
A local addition on $G$ is a smooth map $\ta:TG\supseteq V  \to G$, which is defined on an open neighborhood $V $ of the 0-section in $TG$, such that $\ta(0_x)=x$ for all $x\in G$ and $(\pi_{G},\ta):V \to G\times G$  is a diffeomorphism onto its range.
The local addition $\ta$ is called right-invariant if $T\mu^y(V) = V $ and $\ta \o T\mu^y = \mu^y \o \ta$ holds for all $y \in G$.
\end{definition}

\begin{remark}[Existence of right-invariant local additions]
Right-invariant local additions exist on many important examples of half-Lie groups:
\begin{itemize}
\item Any local addition on a compact manifold $M$ induces a right-invariant local addition on the half-Lie group $\Diff_{C^k}(M)$, $k\geq 1$.
\item Any local addition on a finite-dimensional manifold $M$ with bounded geometry induces a right-invariant local addition on the half-Lie group $\Diff_{H^k}(M)$, $k>\on{dim}(M)/2+1$.
\item The exponential map on a Banach half-Lie group, provided it exists and is smooth, of any right-invariant weak Riemannian metric is a local addition.
\item The semidirect products considered in \cite{MarquisNeeb18} are half-Lie groups with local additions.
\end{itemize}
\end{remark}

Local additions are closely related to linear connections, sprays, and geodesic structures, as explained in \cref{sec:local_additions}. 
The following theorem is the first main result of this article. 

\begin{theorem}[Differentiable elements]
\label{thm:Gk}
For any Banach right half-Lie group $G$ carrying a right-invariant local addition, the following statements hold:
\begin{enumerate}[(a)]
\item For any $k \in \mathbb N$, $G^k$ is a Banach half-Lie group.
\item \label{thm:Gk:b} The tangent space $T_eG^k$ is the set of all $X \in T_eG$ such that the right-invariant vector field $R_X:G\ni x \mapsto T_e\mu^x(X)\in TG$ is $C^k$.
\item The inclusion $G^k \to G$ is smooth.
\item \label{thm:Gk:d}
For any $\ell \in \mathbb N$, $G^{k+\ell}$ is a subset of $(G^k)^\ell$.
\item The smooth right-invariant local addition on $G$ induces a smooth right-invariant local addition on the subgroup $G^k$.
\end{enumerate}
\end{theorem}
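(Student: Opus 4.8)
The plan is to turn the right-invariant local addition $\ta$ into an atlas on $G^k$ whose model space is the candidate tangent space of (b). Write $E:=T_eG$ and $E^k:=\{X\in E: R_X\text{ is }C^k\}$. A direct computation gives $R_X\o\mu^y=T\mu^y\o R_X$, so $R_X$ is right-invariant; since right translations are smooth diffeomorphisms, $R_X$ is $C^k$ on $G$ iff it is $C^k$ on a fixed neighbourhood $U$ of $e$. Reading off $R_X|_U$ in a chart therefore equips $E^k$ with a $C^k$-norm, and I would first check that $E^k$ is a Banach space: the only real point is completeness, which follows from completeness of the ambient space of $C^k$-sections over $U$ together with the fact that the right-invariance constraint cuts out a closed subspace (a Cauchy sequence converges to a section whose value at $e$ is the limiting vector $X$, and right-invariance of the limit forces it to equal $R_X$). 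Evaluation at $e$ exhibits a bounded linear inclusion $E^k\hookrightarrow E$, which will yield (c) once charts are available.

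The geometric heart is the equivalence
\[
\tau_e(\xi)\in G^k \iff \xi\in E^k,\qquad \tau_e:=\ta|_{T_eG},
\]
for $\xi$ near $0$ in $E$; it says that $\ta$ carries the infinitesimal $C^k$-condition of (b) exactly onto the elementwise $C^k$-condition of \cref{def:differentiable} (one sanity-checks it in \cref{ex:representation}, where it reduces to the statement that the $H$-component of a $C^k$-element is a $C^k$-vector of $\rho$). Granting it, $\tau_e$ restricts to a bijection from a neighbourhood of $0$ in $E^k$ onto a neighbourhood of $e$ in $G^k$. Since $G^k$ is a subgroup of $G$ (from $\mu_{xx'}=\mu_x\o\mu_{x'}$ and $\mu_{(xx')\i}=\mu_{x'\i}\o\mu_{x\i}$, all $C^k$ when $x,x'\in G^k$), I would transport this chart by right translations, setting $u_x:=\mu^x\o\tau_e$ for $x\in G^k$. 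Right-invariance of $\ta$ makes the change of charts $u_{x'}\i\o u_x$ equal to $\xi\mapsto\tau_e\i(\mu^y(\tau_e(\xi)))$ with $y=x(x')\i\in G^k$, i.e. the local representative of the smooth right translation $\mu^y$, and the equivalence above shows it preserves $E^k$. The main obstacle of the whole theorem is to upgrade this to smoothness for the finer Banach topology of $E^k$: I would express it through the smooth map $\mu^y$ on $G$ and an exponential law for $C^k$-maps, which lets the $k$ derivatives built into $E^k$ be absorbed. Once the transitions are smooth, (b) holds because $T_0\tau_e=\Id$ identifies $T_eG^k$ with $E^k$, (c) follows from the bounded inclusion $E^k\hookrightarrow E$, and (a) is completed by verifying that multiplication and inversion are continuous and right translations smooth in this topology.

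For (d) the same mechanism is the point: passing from $G$ to $G^k$ trades the $k$ extra derivatives of the model space against $k$ derivatives lost by maps. If $x\in G^{k+\ell}$ then $\mu_x:G\to G$ is $C^{k+\ell}$, so in particular $x\in G^k$; writing the restriction $\mu_x:G^k\to G^k$ in the $E^k$-charts, its local representative comes from that of $\mu_x$ on $G$ but now between the finer spaces, and the same exponential-law estimate turns $C^{k+\ell}$ on $G$ into $C^\ell$ on $G^k$ (and likewise for $\mu_{x\i}$). Hence $x\in(G^k)^\ell$.

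Finally (e) is a restriction argument built on (a)--(c): by the equivalence above $\ta$ maps $V\cap TG^k$ into $G^k$, the pair $(\pi_{G^k},\ta)$ is a diffeomorphism onto its range in $G^k\times G^k$ because it is one on $G$ and all constituents are smooth for the $E^k$-structure, and right-invariance is inherited from $\ta$ using that right translations by $G^k$-elements are smooth on $G^k$. I expect the decisive difficulties to be exactly the equivalence $\tau_e(\xi)\in G^k\Leftrightarrow\xi\in E^k$ and the smoothness of the chart changes in the $E^k$-topology; the loss-of-derivatives estimate behind the latter is what simultaneously delivers (d), and the remaining verifications are routine once these are in hand.
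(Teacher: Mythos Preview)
Your approach is essentially the paper's: the paper makes the identification $x\leftrightarrow\mu_x$, $\xi\leftrightarrow R_\xi$ explicit and works on $\Diff_{C^k}(G)^G$ modeled on $\mathfrak X_{C^k}(G)^G$, but your charts $u_x=\mu^x\o\tau_e$ on $E^k$ are exactly the paper's charts $v_{\mu_x}(R_\xi)=\ta\o R_\xi\o\mu_x$ transported through that identification, and your key equivalence $\tau_e(\xi)\in G^k\Leftrightarrow\xi\in E^k$ is the bijection $v_{\Id}$ (it follows directly from $\mu_{\tau_e(\xi)}=\ta\o R_\xi$ by right-invariance of $\ta$, so is less of a difficulty than you suggest). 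The genuine work---smoothness of the transitions in the $E^k$-topology and (d)---is what the paper isolates as \cref{lem:Ck_calculus} (push-forward by smooth maps is smooth on $C^k$, and by $C^{k+\ell}$ maps is $C^\ell$) together with the jet-bundle argument of \cref{lem:Ckl}; your ``exponential law'' gestures at exactly these $\Omega$-lemma type results, so the plan is correct, though carrying it out cleanly is easiest after passing to the vector-field picture as the paper does.
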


The \lcnamecref{thm:Gk} is proven at the end of \cref{sec:CkGGG}.
The idea is to identify any $x \in G^k$ with the left-multiplication $\mu_x$, which belongs to the space of right-invariant $C^k$ diffeomorphisms on $G$.
This space is a Banach manifold, and the Banach manifold structure is inherited by $G^k$.

\begin{example}[Differentiable elements in diffeomorphism groups]
$(\Diff_{C^k}(M))^\ell=\Diff_{C^{k+\ell}}(M)$, for any closed manifold $M$. 
To see that any $g \in (\Diff_{C^k}(M))^\ell$ is $C^{k+\ell}$, we fix $x \in M$, $y \in T_xM$, and a vector field $h \in \mathfrak X(M)$ which is constant and equal to $y$ locally near $x$ in some chart. 
Then, in this chart, 
\begin{align}
\partial_s^k|_0\partial_t^\ell|_0\
g\o(\Id+th)(x+sy)
=
g^{(k+\ell)}(x)(y,\dots,y),
\end{align}
with continuous dependence on $x$ and $y$. 
Therefore, $g\in \Diff_{C^{k+\ell}}(M)$. 
Conversely, any $g\in \Diff_{C^{k+\ell}}(M)$ is a $C^\ell$ element in $\Diff_{C^k}(M)$ by \cref{lem:Ck_calculus}.
Thus, $G:=\Diff_{C^1}(M)$ satisfies $(G^k)^\ell=G^{k+\ell}$, i.e., equality holds in \cref{thm:Gk}.\ref{thm:Gk:d}.
\end{example}

The charts for $G^k$ restrict to charts for $G^\ell$ for any $\ell\geq k$.
This implies that the intersection $G^\infty=\bigcap_k G^k$ is an inverse limit of Banach (ILB) manifold in a sense similar to Omori~\cite{omori1979infinite}.

\begin{definition}[ILB manifold]
\label{def:ilb}
An ILB manifold is a manifold $M$ modeled on a Fr\'echet space $E$ with the following properties:
\begin{enumerate}[(a)]
\item $E=\bigcap_{k\in\mathbb N}E^k$ is the inverse limit of a chain of Banach spaces $E^k$, such that $E^{k+1}$ is continuously embedded in $E^k$, for each $k\in\mathbb N$.
\item There is a collection, indexed by $\al\in A$, of open sets $U_\al$ covering $M$, open sets $V_\al$ in $E^0$, and homeomorphisms $u_\al:U_\al\to E\cap V_\al$.
\item For any $\al,\be\in A$ with $U_\al\cap U_\be\neq \emptyset$, there are open subsets $V_{\al\be}$ and $V_{\be\al}$ in $E^0$ such that $u_\al\i(U_\al\cap U_\be)=V_{\al\be}\cap E$, $u_\be\i(U_\al\cap U_\be)=V_{\be\al}\cap E$, and $u_\al\i\o u_\be:V_{\be\al}\cap E\to V_{\al\be}\cap E$ extends to a smooth map $V_{\be\al}\cap E^k\to V_{\al\be}\cap E^k$, for all $k \in \mathbb N$.
\end{enumerate}
\end{definition}

Compared to this \lcnamecref{def:ilb}, Omori~\cite{omori1979infinite} additionally requires $E^{k+1}$ to be densely included in $E^k$.
This is not true in the present generality.
Indeed, there are examples where $G^1=\{e\}$ is trivial \cite{neeb2010differentiable}.
Moreover, the further properties in Omori's definition of ILB Lie groups \cite[Definition~III.3.1]{omori1979infinite} may also fail.
Nevertheless, the following statement holds:
\begin{lemma}[Smooth elements in half-Lie groups]
\label{lem:smooth_elements}
For any Banach half-Lie group $G$ carrying a right-invariant local addition, the set $G^\infty=\cap_{k\in\mathbb N} G^k$ of smooth elements in $G$ is an ILB manifold and a Lie group.
\end{lemma}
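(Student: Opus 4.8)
The plan is to derive both conclusions from the Banach half-Lie group structures on the $G^k$ provided by \cref{thm:Gk} and then to pass to the inverse limit $k\to\infty$. Throughout write $G^0=G$, $E^k:=T_eG^k$ for the Banach model space of $G^k$, and $\psi_e:=(\ta|_{T_eG})\i:U_e\to E^0$ for the chart at $e$ cut out by the right-invariant local addition $\ta$ of \cref{def:addition}, so $\psi_e(y)=v$ whenever $\ta(v)=y$ with $v\in T_eG$. For \cref{def:ilb}(a) I would first note that the inclusions $G^{k+1}\hookrightarrow G^k$ are smooth, so their differentials at $e$ realize $E^{k+1}\hookrightarrow E^k$ as continuous linear embeddings; hence $E:=T_eG^\infty=\bigcap_k E^k$ is the Fr\'echet inverse limit. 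For \cref{def:ilb}(b), by \cref{thm:Gk}.\ref{thm:Gk:b} and the induced local addition of \cref{thm:Gk}, the identity chart of $G^k$ is the restriction $\psi_e|_{U_e\cap G^k}$ with image $V\cap E^k$, where $V:=\psi_e(U_e)\subseteq E^0$; intersecting over $k$ yields a homeomorphism $U_e\cap G^\infty\to V\cap E$, and right translating it by elements of $G^\infty$ produces an atlas covering $G^\infty$.

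For \cref{def:ilb}(c) I would use that, by right-invariance of $\ta$, the chart centered at $x\in G^\infty$ equals $\psi_e\o\mu^{x\i}$, so the transition between the charts centered at $x,x'\in G^\infty$ is the conjugated right translation $\psi_e\o\mu^{a}\o\psi_e\i$ with $a:=x'x\i$. Since $G^\infty$ is a subgroup we have $a\in G^k$ for every $k$, and right translation $\mu^a$ is smooth on the half-Lie group $G^k$; composing with the chart $\psi_e$ of $G^k$ shows that the transition extends to a smooth map $V_{\be\al}\cap E^k\to V_{\al\be}\cap E^k$. This completes the ILB manifold structure.

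It remains to exhibit the Lie group structure. Now $G^\infty=\bigcap_k G^k$ is an intersection of subgroups, hence a subgroup carrying the induced topological group structure, and one must show that multiplication and inversion are smooth, equivalently smooth into each $G^k$. Here the limitation of the half-Lie structure enters, since left translations on $G$ are only continuous. The remedy is the regularity trading of \cref{thm:Gk}.\ref{thm:Gk:d}: for $x\in G^\infty\subseteq G^{k+\ell}\subseteq(G^k)^\ell$ the left translation $\mu_x:G^k\to G^k$ is $C^\ell$, while right translations are smooth, so that multiplication $G^{k+\ell}\times G^k\to G^k$ is $C^\ell$. As the factors of $G^\infty\times G^\infty$ lie in $G^{k+\ell}$ for every $\ell$, every derivative of the multiplication into $G^k$ exists, so it is smooth into $G^k$ and hence into $G^\infty$; inversion is handled the same way, using that $\mu_x\i$ is $C^k$ for $x\in G^k$ by \cref{def:differentiable}.

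I expect the crux to be exactly this last step: upgrading the level-wise, regularity-losing estimates into genuine \emph{joint} smoothness of multiplication and inversion on the inverse limit, since $(x,y)\mapsto xy$ is not smooth on $G$ itself. Concretely one must show that composing the right-invariant charts with multiplication yields a map depending smoothly on both arguments at each Banach level $E^k$, with the correct loss of derivatives---this is the half-Lie analogue of the Ebin--Marsden composition lemma for diffeomorphism groups. By comparison, the remaining points (smoothness of the $\ta$-based transition maps and the matching $\psi_e(U_e\cap G^k)=V\cap E^k$ of chart images across levels) are routine.
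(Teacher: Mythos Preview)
Your ILB-manifold argument is essentially the paper's: both use the charts furnished by the right-invariant local addition $\ta$ (equivalently, the charts $u_f$ of \cref{lem:CkGGG_manifold}) and observe that they restrict coherently from level $0$ to every level $k$, with chart changes given by right translations. The paper phrases this via the identification $G^k\cong\Diff_{C^k}(G)^G$, but the content is the same. (Minor point: your $a$ should be $xx'^{-1}$ rather than $x'x^{-1}$; this does not matter since $G^\infty$ is a group.)

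For the Lie-group structure the paper takes a genuinely different route. You propose the Omori/Ebin--Marsden strategy: show that multiplication $G^{k+\ell}\times G^k\to G^k$ is $C^\ell$ jointly, then let $\ell\to\infty$. You correctly flag joint smoothness as the crux, and indeed the ingredients in the paper (\cref{thm:Gk}.\ref{thm:Gk:d} and \cref{lem:Ckl}) give only \emph{separate} $C^\ell$-regularity---$C^\ell$ in the second variable for fixed $x\in G^{k+\ell}$, smooth in the first variable for fixed $y$---so a genuine joint estimate would still need to be proved. The paper bypasses this entirely: it works on the model $\Diff_{C^\infty}(G)^G$ and invokes convenient calculus \cite[Corollary~3.13]{KrieglMichor97} to conclude that composition is smooth because it maps smooth curves to smooth curves (\cref{lem:curves} together with the chain rule handles this). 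This curve-testing argument avoids any level-wise joint Fr\'echet differentiability statement. Your approach can presumably be completed along the lines of the classical diffeomorphism-group argument, but it is longer and requires proving a lemma not present in the paper; the paper's approach is shorter given the convenient-calculus toolkit already in use.
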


The \lcnamecref{lem:smooth_elements} is proven at the end of \cref{sec:CkGGG} using the explicit construction of the manifold charts for $G^k$.

Finally, we investigate Lie algebras of half-Lie groups.
The term Lie algebra is somewhat misleading because the Lie bracket is not globally defined, as shown next.

\begin{lemma}[Lie bracket on half-Lie groups]
\label{lem:lie_bracket}
Let $G$ be a Banach half-Lie group carrying a right-invariant local addition.
For any $k \in \mathbb N$, the Lie bracket
\begin{equation}
[\cdot,\cdot]:T_eG^{k+1}\x T_eG^{k+1}\to T_eG^k
\end{equation}
is well defined in the following three equivalent ways:
\begin{enumerate}[(a)]
\item\label{lem:lie_bracket:1} Any vectors $X,Y \in T_e G^{k+1}$ extend uniquely to right-invariant $C^1$ vector fields $R_X,R_Y$ on $G^k$, and $[X,Y] := dR_Y(e)(X)-dR_X(e)(Y)$, where the right-hand side is interpreted in a chart around $e \in G^k$.
\item\label{lem:lie_bracket:2} The derivation $f\mapsto (R_XR_Yf-R_YR_Xf)(e)$ on smooth real-valued functions defined near $e \in G^k$ is the derivative in the direction of a vector in $T_eG^k$, which is denoted by $[X,Y]$.
\item\label{lem:lie_bracket:3} The vector field $R_X$ has a $C^1$ flow $\on{Fl}^{R_X}:\mathbb R\times G^k \to G^k$, and consequently, the derivative $[X,Y] := -\partial_t|_0 (\on{Fl}^{R_X}_t)^* R_Y(e) \in T_eG^k$ exists.
\end{enumerate}
\end{lemma}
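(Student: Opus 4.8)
The plan is to reduce all three descriptions to one regularity statement and then read them off by standard Banach--manifold calculus. The crux is the lemma: \emph{for $X\in T_eG^{k+1}$ the right-invariant vector field $R_X$ restricts to a $C^1$ vector field on the Banach manifold $G^k$.} First note that $R_X|_{G^k}$ is exactly the right-invariant vector field of the half-Lie group $G^k$ attached to $X$, since right translations on $G^k$ are the restrictions of those on $G$ and the inclusion $G^k\to G$ is smooth (\cref{thm:Gk}); so by \cref{thm:Gk}.\ref{thm:Gk:b} applied to $G^k$ the lemma is equivalent to the tangent-space inclusion $T_eG^{k+1}\subseteq T_e(G^k)^1$, the infinitesimal counterpart of $G^{k+1}\subseteq(G^k)^1$ in \cref{thm:Gk}.\ref{thm:Gk:d}. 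To prove it I would write $R_X$ in the charts of $G^k$ produced in the proof of \cref{thm:Gk}, which arise by right translation from a single chart at $e$; in them the principal part of $R_X$ costs exactly one derivative of the right-multiplication data, and as $X\in T_eG^{k+1}$ makes $R_X$ of class $C^{k+1}$ on $G$, that derivative is available and the principal part is $C^1$ with values in $T_eG^k$. This loss of precisely one order is both the reason the bracket lands in $T_eG^k$ rather than $T_eG^{k+1}$ and the main obstacle of the proof.

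Granting the lemma, $R_X$ and $R_Y$ are $C^1$ vector fields on $G^k$ with smooth transition maps, so the chart expression $DR_Y\cdot R_X-DR_X\cdot R_Y$ of their bracket defines a continuous vector field $[R_X,R_Y]$ on $G^k$: the non-tensorial second-derivative terms are symmetric in $R_X,R_Y$ and cancel under antisymmetrisation, which also makes the expression chart-independent. Its value at $e$ lies in $T_eG^k$ by construction, and since the principal parts of $R_X,R_Y$ take the values $X,Y$ at the base point $e$, in a chart around $e$ this value is precisely $dR_Y(e)(X)-dR_X(e)(Y)$; this is description \ref{lem:lie_bracket:1}, well posed because $X,Y\in T_eG^{k+1}\subseteq T_eG^k$ lie in the domain of these derivatives. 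For \ref{lem:lie_bracket:2}, if $f$ is smooth near $e$ then $R_Yf$ is $C^1$ and hence $R_XR_Yf$ is continuous, the two second-order terms in $R_XR_Yf-R_YR_Xf$ cancel by symmetry of $d^2f$, and the surviving first-order operator is, in a chart, differentiation along $dR_Y(e)(X)-dR_X(e)(Y)$. Evaluating at $e$ therefore identifies \ref{lem:lie_bracket:2} with \ref{lem:lie_bracket:1}.

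For \ref{lem:lie_bracket:3} I would first identify the flow. The integral curve of $R_X$ through $e$ is the one-parameter subgroup $t\mapsto\exp(tX)$, and right-invariance gives $\on{Fl}^{R_X}_t=\mu_{\exp(tX)}$, i.e.\ left translation. Since $R_X$ is $C^{k+1}$ on $G$ its flow on $G$ exists (extended to all $t$ by the group law) and is $C^{k+1}$; in particular each time-$t$ map $\mu_{\exp(tX)}$ is a $C^{k+1}$ diffeomorphism of $G$, so $\exp(tX)\in G^{k+1}$, whence $G^{k+1}\subseteq(G^k)^1$ from \cref{thm:Gk}.\ref{thm:Gk:d} shows $\mu_{\exp(tX)}$ is $C^1$ on $G^k$. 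Hence the flow restricts to a $C^1$ map $\on{Fl}^{R_X}\colon\mathbb R\times G^k\to G^k$, as claimed. The standard relation between the Lie bracket and differentiation along a flow, $[R_X,R_Y]=\mathcal{L}_{R_X}R_Y$ computed as the $t$-derivative at $0$ of the flow pullback, then identifies \ref{lem:lie_bracket:3} with \ref{lem:lie_bracket:1}---the sign in \ref{lem:lie_bracket:3} being that of the chosen pullback convention---and evaluating at $e$ yields the common vector $[X,Y]\in T_eG^k$. The only genuinely new analytic input beyond \cref{thm:Gk} is the lemma of the first paragraph, where I expect the real work to lie.
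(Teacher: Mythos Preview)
Your argument is correct and follows essentially the same route as the paper: both reduce \ref{lem:lie_bracket:1} and \ref{lem:lie_bracket:2} to the inclusion $T_eG^{k+1}\subseteq T_e(G^k)^1$ coming from \cref{thm:Gk}.\ref{thm:Gk:d} together with \cref{thm:Gk}.\ref{thm:Gk:b}, and both use standard Banach-manifold calculus for the chart computations. The only notable difference is in \ref{lem:lie_bracket:3}. The paper argues directly that since $R_X$ is $C^1$ on $G^k$ (already established in \ref{lem:lie_bracket:1}), ODE theory on Banach manifolds gives a global $C^{1,1}$ flow (continuous $\partial_1,\partial_2,\partial_1\partial_2,\partial_2\partial_1$), which is precisely the regularity needed to differentiate the pullback $T(\on{Fl}^{R_X}_{-t})\circ R_Y\circ\on{Fl}^{R_X}_t$ at $t=0$; it then verifies the equivalence with \ref{lem:lie_bracket:2} via an explicit two-parameter auxiliary function $\alpha(t,s)=R_Y(f\circ\on{Fl}^{R_X}_{-s})(\on{Fl}^{R_X}_t(e))$ rather than invoking the Lie-derivative identity as a black box. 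Your detour through $\exp(tX)\in G^{k+1}\subseteq(G^k)^1$ is valid but only shows that each individual time-$t$ map is $C^1$ on $G^k$; for the joint regularity (in particular the $t$-differentiability of $T\on{Fl}^{R_X}_{-t}$) you should simply invoke the flow theorem on $G^k$ using what your first paragraph already proved---this is exactly what the paper does, and it renders the $\exp$ detour redundant.
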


\begin{proof}
\begin{enumerate}[(a),wide]
\item By \cref{thm:Gk}.\ref{thm:Gk:d}, $T_eG^{k+1}$ is included in $T_e(G^k)^1$, and by \cref{thm:Gk}.\ref{thm:Gk:b}, $R_X,R_Y$ are right-invariant $C^1$ vector fields on $G^k$. In particular, the expression $dR_Y(e)(X)-dR_X(e)(Y)$ is well defined in a chart around $e \in G^k$. The definition is independent of the chosen chart and therefore determines a unique element $[X,Y]\in T_eG^k$.
\item The tangent vector $[X,Y] \in T_eG^k$ defined in \ref{lem:lie_bracket:1} acts on functions $f$ as the derivation $f\mapsto R_XR_Yf-R_YR_Xf(e)$. Thus, the definitions in \ref{lem:lie_bracket:1} and \ref{lem:lie_bracket:2} coincide.
\item The vector field $R_X$ is right-invariant $C^1$ on $G^k$ by \ref{lem:lie_bracket:1}.
It  has a global $C^{1}$ flow $\on{Fl}^{R_X}:\mathbb R\times G^k \to G^k$. 
The flow $T\on{Fl^{R_X}_t}: TG^k\to TG^k$ is differentiable in $t$ at 0 since its velocity field $TX$ is Lipschitz 
by \ref{lem:right-invariance}.  
Therefore, the expression
\begin{equation}
(\on{Fl}^{R_X}_t)^* R_Y(e)
=
T(\on{Fl}^{R_X}_{-t}) \o R_Y \o \on{Fl}^{R_X}_t
\end{equation}
is differentiable in $t$ at $t=0$, and we have shown that $[X,Y]$ in \ref{lem:lie_bracket:3} is well defined.
This definition coincides with the one in \ref{lem:lie_bracket:2} by a well-known argument \cite[Lemma~32.15]{KrieglMichor97}:
for any smooth function $f$ defined near $e \in G^k$ and for sufficiently small $s,t\in\mathbb R$,
define
\begin{equation}
\al(t,s)=R_Y(f\o\on{Fl}^{R_X}_{-s})(\on{Fl}^{R_X}_t(e)).
\end{equation}
Then, $\partial_u\al(u,u)=[X,Y]f$ in the sense of \ref{lem:lie_bracket:2} because
\begin{align}
\al(t,0)
&=
R_Y(f)(\on{Fl}^{R_X}_t(e)),
&
\al(0,s)
&=
R_Y(f\o\on{Fl}^{R_X}_{-s})(e),
\\
\partial_t\al(0,0)
&=
R_XR_Yf(e),
&
\partial_s\al(0,0)
&=
-R_YR_Xf(e).
\end{align}
Moreover, one also has $\partial_u|_0\al(u,u)=[X,Y]f$ in the sense of \ref{lem:lie_bracket:3} because
\begin{align}
\al(u,u)
&=
(T\on{Fl}^{R_X}_{-u}R_Y)(f)(\on{Fl}^{R_X}_u(e))
=
((\on{Fl}^{R_X}_u)^*R_Y)(f)(e).
\end{align}
Thus, the definitions in \ref{lem:lie_bracket:2} and \ref{lem:lie_bracket:3} coincide.\qedhere
\end{enumerate}
\end{proof}

\section{Regularity of half-Lie groups}
\label{sec:regularity}

\begin{definition}[Regular half-Lie groups]
Let $G$ be a Banach right half-Lie group, and let $\mathcal F$ be a subset of $L^1_\loc(\mathbb R,T_eG)$.
Then, $G$ is called $\mathcal F$-regular if for all $X\in \mathcal F$, there exists a unique solution 
$g \in W^{1,1}_\loc(\mathbb R,G)$\footnote{The space $W^{1,1}_\loc(\mathbb R,G)$ consists of all $g$ such that $g$  and  $g' \in L^1_\loc$.} of the differential equation
\begin{equation}
\label{eq:evol}
\partial_t g(t)= T_e \mu^{g(t)}X(t),
\qquad
g(0)=e.
\end{equation}
This solution is called the \emph{evolution} of $X$. 
It will be denoted by $\on{Evol}(X)$, and its evaluation at $t=1$ by $\on{evol}(X)$.
In the special case $\mathcal F=C^{\infty}(\mathbb R,T_eG)$, we shall simply speak of regularity of $G$.
\end{definition}

Every Banach Lie group $G$, and in particular every finite dimensional Lie group, is regular.
Indeed, the time-dependent right-invariant vector field
\begin{equation}
R_X:\mathbb R\times G\ni (t,x) \mapsto T_e\mu^x(X(t)) \in T_xG
\end{equation}
is smooth, hence its integral curves exist uniquely for all time, and the evolution of $X$ is the integral curve of $R_X$ started at the identity element.
On half-Lie groups $G$, the situation is more subtle because $R_X$ may be non-smooth, even if $X$ is smooth.
However, if $X$ takes values in $T_eG^k$, then $R_X$ is a time-dependent $C^k$ vector field on $G$.
This is used in the following \lcnamecref{thm:Gkreg} to show regularity of $G^k$ for any $k\geq 1$.

\begin{theorem}[Regularity of $G^k$]
\label{thm:Gkreg}
Let $G$ be a Banach right half-Lie group carrying a right-invariant local addition.
Then, for any $k \in \mathbb N_{\geq 1}\cup{\infty}$, $G^{k}$ is regular.
\end{theorem}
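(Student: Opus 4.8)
The plan is to lift the evolution equation from $G^k$ to the ambient half-Lie group $G$, to solve it there using that the relevant right-invariant vector field is $C^k$, and then to show that the resulting integral curve in fact takes values in the smaller group $G^k$. Fix first a finite $k\ge 1$ and a smooth curve $X\in C^\infty(\mathbb R,T_eG^k)$. The associated time-dependent right-invariant vector field $R_X:\mathbb R\times G\ni(t,x)\mapsto T_e\mu^x(X(t))\in TG$ is, by \cref{thm:Gk}.\ref{thm:Gk:b}, of class $C^k$ in the spatial variable for each fixed $t$ (precisely because $X(t)\in T_eG^k$), with smooth dependence on $t$. Since $k\ge 1$ it is locally Lipschitz in space, so the Banach-space theory of Carath\'eodory differential equations provides a unique local absolutely continuous integral curve $g$ through $e$; this is the candidate evolution $\on{Evol}(X)$.

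I would next promote this local solution to a global one, using that right translations on $G$ are smooth. If $g$ solves \eqref{eq:evol} on $[0,a]$, one solves the time-shifted problem $\partial_t h(t)=R_X(t+a,h(t))$, $h(0)=e$, locally near $e$, and sets $g(t):=\mu^{g(a)}(h(t-a))=h(t-a)\cdot g(a)$ for $t$ slightly beyond $a$; right-invariance of $R_X$ together with smoothness of $\mu^{g(a)}$ makes this an extension of $g$ as a solution. As $\norm{X(t)}$ and the local Lipschitz constants of $R_X(t,\cdot)$ near $e$ are bounded on every compact time interval, the length of each extension step is bounded below, so $g$ extends to all of $\mathbb R$. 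Uniqueness of $g$ as a $W^{1,1}_\loc$ curve into $G$ is the standard uniqueness for locally Lipschitz Carath\'eodory equations.

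The crux is to show that $g$ stays in $G^k$. Here I would use that, because $R_X$ is right-invariant, its flow is given by \emph{left} translation: the solution through an arbitrary $x_0$ is $t\mapsto g(t)\cdot x_0$, so the time-$t$ flow (from time $0$) equals $\on{Fl}^{R_X}_{t}=\mu_{g(t)}$. By differentiable dependence of solutions on initial conditions, the time-$t$ flow of a $C^k$ vector field is a $C^k$ diffeomorphism whose inverse, the backward flow $\mu_{g(t)}\i=\mu_{g(t)^{-1}}$, is again $C^k$. Thus both $\mu_{g(t)}$ and $\mu_{g(t)}\i$ are $C^k$, which is exactly the statement $g(t)\in G^k$ for every $t$. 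Moreover, differentiating the flow relation gives $\partial_t\mu_{g(t)}=R_X(t,\cdot)\o\mu_{g(t)}$, a composition of $C^k$ maps and hence $C^k$; under the identification of $G^k$ with right-invariant $C^k$ diffeomorphisms that underlies its Banach-manifold structure (the correspondence $g(t)\leftrightarrow\mu_{g(t)}$), this shows $t\mapsto g(t)$ is a $C^1$ curve into $G^k$, so in particular $g\in W^{1,1}_\loc(\mathbb R,G^k)$. Since the inclusion $G^k\hookrightarrow G$ is smooth (\cref{thm:Gk}), the identity $\partial_t g=T_e\mu^{g}X$ that holds in $G$ also holds in $G^k$, and uniqueness in $G^k$ follows from uniqueness in $G$. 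This identification of the flow with left translation, combined with $C^k$-dependence on initial conditions, is the one genuinely delicate point: it is what prevents the curve solving the lifted equation in $G$ from escaping the much smaller set $G^k$.

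Finally, for $k=\infty$ I would feed a smooth $X\in C^\infty(\mathbb R,T_eG^\infty)$, with $T_eG^\infty=\bigcap_kT_eG^k$, into the construction above for every finite $k$. By uniqueness all resulting curves coincide with a single $g$ satisfying $g(t)\in\bigcap_kG^k=G^\infty$, and smoothness of $g$ into each $G^k$ yields smoothness into the Fr\'echet Lie group $G^\infty$ of \cref{lem:smooth_elements}, which is the asserted regularity.
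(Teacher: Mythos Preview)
Your proof is correct and follows essentially the same strategy as the paper: solve the evolution equation in $G$ via the flow of the time-dependent $C^k$ right-invariant vector field $R_X$, identify that flow with left translation $\mu_{g(t)}$ so that $g(t)\in G^k$, and then verify differentiability of $t\mapsto g(t)$ into $G^k$ via the identification $G^k\cong\Diff_{C^k}(G)^G$. The paper is slightly more careful at the last step, rewriting the flow equation in integral form in a chart for $\Diff_{C^k}(G)^G$ and invoking \cref{lem:Ck_calculus}.\ref{lem:Ck_calculus:4} to see that the integrand is a continuous curve in the Banach space $\mathfrak X_{C^k}(G)^G$; this is precisely what makes rigorous your assertion that $\partial_t\mu_{g(t)}=R_X(t,\cdot)\o\mu_{g(t)}$ being $C^k$ for each $t$ yields a $C^1$ curve into $G^k$.
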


\begin{proof}
We use superscripts $G$ to denote invariance with respect to the right-action of $G$. 
Let $X\in C^{\infty}(\mathbb R, T_e G^{k})$ for $k\in \mathbb N_{\geq 1}$.
Then, $X$ extends uniquely to a time-dependent right-invariant $C^k$ vector field $R_X \in C^{\infty}(\mathbb R, \mathfrak X_{C^k}(G)^G)$.
Consequently, $R_X$ has a global flow $\on{Fl}^{R_X}:\mathbb R\times G\to G$.
As $R_X$ is right-invariant and $C^k$, it follows for each $t \in \mathbb R$ that $\on{Fl}^{R_X}_t$ is right-invariant and $C^k$.
The curve $\mathbb R \ni t\mapsto \on{Fl}^{R_X}_t \in \Diff_{C^k}(G)^G$ is differentiable.
To see this, one works locally in a chart for $\Diff_{C^k}(G)^G$ as described in \cref{lem:CkGGG_manifold}, rewrites the flow equation in integral form, verifies using \cref{lem:Ck_calculus}.\ref{lem:Ck_calculus:4} that the integrand is a continuous function of time with values in the function space $C^k(G,G)$, and concludes that the left-hand side of the integral equation is continuously differentiable in $C^k(G,G)$.
Thus, the evolution of $R_X$ in $\Diff_{C^k}(G)^G$ exists and is given by $\on{Fl}^{R_X}$.
Using the diffeomorphism $\on{ev}_e:\Diff_{C^k}(G)^G\to G^k$ described in the proof of \cref{thm:Gk}, one obtains that the evolution of $X$ in $G^k$ exists and is given by $\on{Fl}^{R_X}(e)$.
\end{proof}

\section{Extensions}
\label{sec:extensions}

In this part we study extension theory in the category of right half-Lie groups, with morphisms defined as smooth group homomorphisms. 
For Lie groups, the (cohomological) extensions have been worked out by  O.\ Schreier \cite{Schreier26, Schreier26a},
R.\ Baer \cite{Baer34},
S.\ Eilenberg and S.\ MacLane \cite{EilenbergMacLane47},
G.\ Hochschild \cite{Hochschild51, Hochschild54},
and G.\ Hochschild and J.-P\ Serre \cite{HochschildSerre53a}. This theory has been extended to infinite dimensional Lie groups by Neeb \cite{Neeb02,Neeb04,Neeb07}. 
A comprehensive presentation of this theory for (finite dimensional) Lie groups can be found in \cite[Sections 15.11--15.27]{Michor08}, and we shall refer to this, but we translate everything to right actions, which are better suited for our goal. We refrain from pushing the extension theory to its cohomological description; this can done similarly as in Neeb \cite{Neeb07} by considering appropriate local $C^{0,\infty}$-cocycles.

\begin{definition}[Smooth extensions]
\label{def:smooth_extensions}
Let $N$ and $G$ be right half-Lie groups. A right half-Lie group $E$ is called an \emph{extension} of $G$ over $N$ if there is a short exact sequence of smooth group homomorphisms:
\begin{equation}
\xymatrix@C=4em{ e  \ar[r]  & N  \ar[r]^{{i}}  
   & E  \ar[r]^{{p}}   & G  \ar[r]    & e}\,.
\end{equation}
The extension is said to admit:
\begin{enumerate}[(a)]
\item \label{def:smooth_extensions:a} \emph{local sections}  if $p$ admits a local smooth section $s$ near $e$ (equivalently near
any point), and $i$ is initial \cite[Section 27.11]{KrieglMichor97}.
\item \label{def:smooth_extensions:b} \emph{local retractions} if $i$ admits a local smooth retraction $r$ near $e$ (equivalently
near any point), and $p$ is final \cite[Section 27.15]{KrieglMichor97}.
\end{enumerate}
The extension is called a \emph{smooth extension} if both \ref{def:smooth_extensions:a} and \ref{def:smooth_extensions:b} are valid.
\end{definition}

If $E$ is a Lie group, then the two conditions \ref{def:smooth_extensions:a} and \ref{def:smooth_extensions:b} are equivalent because $s(p(x)).i(r(x))=x \in E$. They imply that $E$ is locally diffeomorphic to $N\x G$ via $(r,p)$ with local inverse $(i\o\on{pr}_1).(s\o\on{pr}_2)$. 
Under  conditions \ref{def:smooth_extensions:a} and \ref{def:smooth_extensions:b} $E$ is locally diffeomorphic to $N\x G$ via $(r,p)$ with local inverse $(i\o\on{pr}_1).(s\o\on{pr}_2)$.
Not every smooth exact sequence of even Lie groups admits local sections as required above. Let, for example, $N$ be a closed linear subspace in a convenient vector space $E$ which is not a direct summand, and let $G$ be $E/N$. Then the tangent mapping at 0 of a local smooth splitting would make $N$ a direct summand.

For extensions of Lie groups conjugation by elements in $E$ is automatically smooth on $N$. This can fail in the context of extensions of half-Lie groups, which  leads us to the following definition:
\begin{definition}[Smooth conjugation]
\label{def:smooth_conjugation}
An extension of half-Lie groups $e\to N \xrightarrow{i} E \xrightarrow{p} G\to e$ is said to admit a smooth conjugation
if for any $x \in E$, conjugation $n\mapsto x\, i(n)\, x\i$ induces a smooth automorphisms of $N$.
\end{definition}

\begin{definition}[Equivalent extensions]
\label{def:extension:equivalent}
Two extensions are {\it topologically (smoothly) equivalent} if there exists a continuous (diffeomorphic) isomorphism $\ph$ fitting commutatively into the diagram
\begin{equation}
\xymatrix@C=4em{ e  \ar[r]  & N  \ar[r]^{{i}}  \ar@{=}[d]
   & E  \ar[r]^{{p}}  \ar[d]_{{\ph}}  & G  \ar[r]  \ar@{=}[d]   & e\\
 e  \ar[r]  & N  \ar[r]^{{i'}} & E'  \ar[r]^{{p'}} & G  \ar[r] & e. }
\end{equation}
\end{definition}
Note that if a  homomorphism  $\ph$  exists, then it is an isomorphism. 
We first consider the important special case of split extensions.
  
\begin{definition}[Split extensions]
An extension is called \emph{split} if there exists a smooth group homomorphism $s:G\to E$ which is a section of $p$. 
\end{definition}

A seemingly weaker condition is that the section $s$ is a group homomorphism and smooth near $e$; then $s$ is automatically smooth on all of $G$.
By the following \lcnamecref{lem:extensions:semidirect}, split extensions are equivalent to semidirect products.

\begin{lemma}[Split extensions and semidirect products]
\label{lem:extensions:semidirect}
Consider an extension of  half-Lie groups $e\to N \xrightarrow{i} E \xrightarrow{p} G\to e$ with smooth conjugation.
\begin{enumerate}[(a)]
\item Any smooth group homomorphism $s$, which is a section of $p$, determines a continuous right action $\rh:N\times G\to N$ with $n\mapsto \rh(n,x)$ smooth for all $x\in G$, via
$$
\rh:N\x G\to N, \qquad i\rh(m,x) = s(x^{-1}).i(m).s(x)
$$
\item Any such right action $\rh$ defines a half-Lie group and semidirect product $G\ltimes N$, which is the product manifold $G\times N$ with the group operations
$$
(x,m).(y,n)
=
(xy,\rho(m,y)n),
\qquad
(x,m)\i
=
(x\i,\rho(m\i,x\i))
$$
\item \label{lem:extensions:semidirect:a} The extension $E$ is topologically equivalent to the smooth split extension
\begin{equation}
\xymatrix@C=4em{
 e  \ar[r]  & N  \ar[r]^{(e,\Id_N)} & G\ltimes N  \ar[r]^{\on{pr}_1} & G  \ar[r] & e }
\end{equation}
via the continuous isomorphism $G\ltimes N\ni (x,n)\mapsto s(x).i(n)\in E$.
\end{enumerate}
\end{lemma}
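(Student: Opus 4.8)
The plan is to verify the three items (a)--(c) in turn, throughout identifying $N$ with the normal subgroup $i(N)=\ker p$ of $E$ and using that $i$ is injective with continuous inverse on its image. All three parts are direct computations, so I will indicate the mechanism of each rather than grind through the full manipulations; the only conceptual inputs are the smooth-conjugation hypothesis and the merely continuous multiplication of the half-Lie group $E$.

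For (a), I would first check that $s(x\i).i(m).s(x)$ lies in $\ker p$: applying $p$ and using $p\o s=\Id_G$ and $p(i(m))=e$ gives $p\big(s(x\i).i(m).s(x)\big)=x\i\,e\,x=e$, so by injectivity of $i$ the formula defines $\rho(m,x)\in N$ uniquely. The action axioms $\rho(m,e)=m$ and $\rho(\rho(m,x),y)=\rho(m,xy)$ then follow by applying $i$ and collapsing the products $s(x).s(y)=s(xy)$, using only that $s$ is a homomorphism and $i$ is injective. The key point is the smoothness of $m\mapsto\rho(m,x)$: since $s(x\i)\i=s(x)$, this map is conjugation in $E$ by $s(x\i)$ restricted to $N$, hence a smooth automorphism of $N$ by \cref{def:smooth_conjugation}. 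Joint continuity of $\rho$ follows because $(m,x)\mapsto s(x\i).i(m).s(x)$ is continuous into $E$ (continuity of multiplication and inversion in $E$ and of the maps $s,i$) and $i\i$ is continuous on $\ker p$.

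For (b), I would verify the group axioms directly: associativity and the inverse formula reduce, after expanding, to the facts that each $\rho(\cdot,y)$ is a homomorphism of $N$ and that $\rho$ is a right action, while the neutral element is $(e,e_N)$ by $\rho(m,e)=m$ and $\rho(e_N,y)=e_N$. Continuity of the product and of inversion on $G\times N$ is then immediate from the joint continuity of $\rho$ in (a) together with continuity of the operations on $G$ and $N$, so $G\ltimes N$ is a topological group. To see it is a \emph{right} half-Lie group, I check that right translation $(x,m)\mapsto(xy,\rho(m,y)n)$ is smooth for fixed $(y,n)$: the first slot is the smooth right translation $x\mapsto xy$ of $G$, and the second is the composite $m\mapsto\rho(m,y)\mapsto\rho(m,y).n$ of the smooth automorphism $\rho(\cdot,y)$ from (a) with the smooth right translation by $n$ in the half-Lie group $N$.

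For (c), I set $\ph(x,n)=s(x).i(n)$ and first check it is a homomorphism: expanding $\ph\big((x,m).(y,n)\big)=s(xy).i(\rho(m,y))\,i(n)$ and substituting $i(\rho(m,y))=s(y\i).i(m).s(y)$ produces the cancellation $s(y).s(y\i)=e$, so the result equals $s(x).i(m).s(y).i(n)=\ph(x,m).\ph(y,n)$. Bijectivity is routine: injectivity follows by applying $p$ (forcing $x=e$) and then injectivity of $i$, and surjectivity by writing $z=s(p(z)).i(n)$ with $n=i\i\big(s(p(z))\i.z\big)$, which is legitimate since $p\big(s(p(z))\i.z\big)=e$. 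Continuity of $\ph$ and of $\ph\i(z)=\big(p(z),\,i\i(s(p(z))\i.z)\big)$ both follow from continuity of the operations in $E$, of $p,s,i$, and of $i\i$ on $\ker p$, and the relations $\ph\o(e,\Id_N)=i$ and $p\o\ph=\on{pr}_1$ are immediate from $s(e)=e$ and $p\o s=\Id_G$, so the diagram of \cref{def:extension:equivalent} commutes. The step I expect to be the crux --- and the reason only \emph{topological} equivalence is asserted --- is that $\ph$ need not be smooth: it is assembled from the multiplication of the half-Lie group $E$, whose left translations are only continuous, so $(x,n)\mapsto s(x).i(n)$ is continuous but in general not jointly smooth. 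No hard estimate is involved; this failure of smoothness is the essential feature of the statement.
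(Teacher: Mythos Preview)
Your argument is correct and follows essentially the same route as the paper: verify the action axioms and smoothness via the smooth-conjugation hypothesis, check the half-Lie group structure on $G\ltimes N$ from the properties of $\rho$, and exhibit $\ph(x,n)=s(x).i(n)$ as a continuous group isomorphism making the diagram commute.

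One refinement worth noting: the paper observes that $\ph$ is in fact \emph{separately} smooth in each variable. Smoothness in $x$ for fixed $n$ is clear since $\ph(x,n)=\mu^{i(n)}(s(x))$ is a right translation composed with the smooth $s$; smoothness in $n$ for fixed $x$ follows from rewriting $s(x).i(n)=i(\rho(n,x\i)).s(x)$, where $\rho(\cdot,x\i)$ is smooth by part (a) and right translation by $s(x)$ is smooth. Your explanation that $\ph$ ``is assembled from the multiplication of $E$, whose left translations are only continuous'' is therefore slightly imprecise: the obstruction is to \emph{joint} smoothness, not to smoothness in $n$. You should also briefly note that $G\ltimes N$ is itself a \emph{smooth} extension in the sense of \cref{def:smooth_extensions}, with obvious local section $(\Id_G,e)$ and retraction $\on{pr}_N$; this is part of what the statement asserts.
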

Note that this implies that every semidirect product is a \emph{smooth} (split) extension.
\begin{proof}
\begin{enumerate}[(a), wide]
\item One easily verifies that $\rh$ is a right action, and the claimed continuity and smoothness properties follow from conditions \ref{def:smooth_extensions:a}--\ref{def:smooth_extensions:b} of \cref{def:smooth_extensions} and the smoothness of the conjugation.
\item One easily verifies that the group operations are well defined, and the continuity and smoothness properties follow from those of $\rh$.
\item One easily verifies that $G\ltimes N$ is a smooth extension of $G$ over $N$ where the group homomorphism is given by $s(x)=(x,e)$ and where $\on{pr}_N:G\ltimes N\to N$ plays the role of the local smooth retraction required by \cref{def:extension:equivalent}. The equivalence of extensions is described by the continuous isomorphism $G\ltimes N\ni (x,n)\mapsto s(x).i(n)\in E$ 
which is smooth in $x$ for fixed $n$, and is also smooth in $n$ for fixed $x$ since 
$n\mapsto ( \rh(n,x\i)).s(x) = s(x).i(n)$ is smooth. So $G\ltimes N\ni (x,n)\mapsto s(x).i(n)\in E$ is separately smooth and a group homomorphism, but we do not know that is smooth. It has the continuous inverse $(p,r):E\to G\times N$, where $r:E\to N$ is given by $i(r(x))=s(p(x)\i).x$. 
\qedhere
\end{enumerate}
\end{proof}

The following theorem explicitly characterizes differentiable elements in semidirect products. 
We have no corresponding characterization for general split extensions because to our knowledge, these are only topologically and not smoothly equivalent to semidirect products. 

\begin{theorem}[Differentiable elements in semidirect products]
\label{thm:semidirect}
Consider the semidirect product of Banach half-Lie groups
\begin{equation}
\xymatrix@C=3em{
 e  \ar[r]  & N  \ar[r]^{i} & G\ltimes N  \ar[r]^{p} & G  \ar[r] & e, }
\end{equation} 
for some right action $\rh$ as in \cref{lem:extensions:semidirect}  with right-invariant local additions, where $i=(e,\Id_N)$ and $p=\on{pr}_1$. 
Then, for any $k \in \mathbb N$, the set of $C^k$ elements in $G\ltimes N$ is a semidirect product of Banach half-Lie groups
\begin{equation}
\xymatrix@C=4em{
 e  \ar[r]  & N^{k,\rho}  \ar[r]^(.3){i} & (G\ltimes N)^k =G^k\ltimes N^{k,\rh} \ar[r]^(.7){p} & G^k  \ar[r] & e, }
\end{equation}
where
\begin{align}
N^{k,\rho}:=i\i\big((G\ltimes N)^k\big)=\left\{m\in N^k: \rho(m,y)n \text{ is } C^k\text{ in } (y,n)\in  G\ltimes N \right\} \subseteq N^k.
\end{align} 
\end{theorem}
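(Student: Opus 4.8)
The plan is to deduce everything from \cref{thm:Gk} applied to the Banach half-Lie group $E:=G\ltimes N$, which carries a right-invariant local addition by hypothesis, so that $E^k:=(G\ltimes N)^k$ is a Banach half-Lie group. First I would record the formula for left translation, $\mu_{(x,m)}(y,n)=(xy,\rho(m,y)n)$, whose two components are the left translation $\mu_x^G$ on $G$ and the map $(y,n)\mapsto\rho(m,y)n$. Since $p=\on{pr}_1$ is a smooth homomorphism with $p\o\mu_{(x,m)}=\mu_x^G\o p$ and $p$ admits local smooth sections, a $C^k$ left translation $\mu_{(x,m)}$ forces $\mu_x^G$ to be $C^k$; applying this also to the inverse shows $p(E^k)\subseteq G^k$. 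Conversely, the section $s(x)=(x,e)$ satisfies $\mu_{s(x)}=(\mu_x^G,\Id_N)$, hence $s(G^k)\subseteq E^k$. As $E^k$ is a group, for $(x,m)\in E^k$ with $x\in G^k$ we get $i(m)=s(x)\i.(x,m)\in E^k$, i.e. $m\in i\i(E^k)=N^{k,\rho}$; and conversely $s(x).i(m)=(x,m)\in E^k$ whenever $x\in G^k$ and $m\in N^{k,\rho}$. This establishes the set- and group-level identity $E^k=G^k\ltimes N^{k,\rho}$ with $N^{k,\rho}=i\i(E^k)$.

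Next I would unwind the explicit description of $N^{k,\rho}$. Membership $m\in i\i(E^k)$ means $(e,m)\in E^k$, i.e. both $\mu_{(e,m)}$ and $\mu_{(e,m)\i}=\mu_{(e,m\i)}$ are $C^k$; since $\mu_{(e,m)}(y,n)=(y,\rho(m,y)n)$, this is exactly the requirement that $\rho(m,y)n$ and $\rho(m\i,y)n$ be $C^k$ in $(y,n)$. Evaluating at $y=e$ exhibits $mn$ and $m\i n$ as $C^k$ maps, so $m\in N^k$. The identity $\rho(m\i,y)=\rho(m,y)\i$ (as $\rho(\cdot,y)$ is an automorphism of $N$), together with the fact that $N^{k,\rho}=i\i(E^k)$ is a subgroup of $N$ and hence closed under inversion, is what I would use to reconcile this two-sided condition with the one-sided condition $\rho(m,y)n\in C^k$ displayed in the statement. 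The same conjugation computation shows the restricted action lands in $N^{k,\rho}$: for $m\in N^{k,\rho}$ and $x\in G^k$ one has $i(\rho(m,x))=s(x)\i.i(m).s(x)\in E^k$, so $\rho(m,x)\in N^{k,\rho}$, and $\rho$ restricts to $N^{k,\rho}\times G^k\to N^{k,\rho}$, continuous and smooth in the first variable.

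It remains to upgrade this to an identity of Banach half-Lie groups and a genuine semidirect product. Here I would give $N^{k,\rho}$ its manifold structure as the kernel of the smooth homomorphism $p|_{E^k}:E^k\to G^k$, split by the smooth section $s|_{G^k}$. The local retraction $r$ with $i(r(x))=s(p(x)\i).x$ restricts to $E^k\to N^{k,\rho}$, and $(p,r)$ restricts to a bijection $E^k\to G^k\times N^{k,\rho}$; to see that $N^{k,\rho}$ is a split Banach submanifold I would analyze the right-invariant field $R_{(\xi,\eta)}(x,m)=(R^G_\xi(x),R^N_{\rho_*(x)\eta}(m))$, where $\rho_*(x)\eta=\partial_t|_0\rho(c(t),x)$ for a curve $c$ with $c(0)=e$, $\dot c(0)=\eta$. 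This field has no cross terms between $\xi$ and $\eta$, so by \cref{thm:Gk}.\ref{thm:Gk:b} the model space of $E^k$ splits as $T_eE^k=T_eG^k\oplus T_eN^{k,\rho}$ compatibly with $T_es$ and $T_ei$; transporting the right-invariant local addition furnished by \cref{thm:Gk} along this splitting endows $N^{k,\rho}$ with a right-invariant local addition and makes it a Banach half-Lie group. With $p$, $s$, $r$, and the conjugation all smooth at the $C^k$ level, \cref{lem:extensions:semidirect} then identifies the split extension $e\to N^{k,\rho}\to E^k\to G^k\to e$ with the semidirect product $G^k\ltimes N^{k,\rho}$ for the restricted action $\rho$, which is the claim.

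The main obstacle I anticipate is precisely this last, differential-geometric, step: showing that $N^{k,\rho}$ is a split Banach submanifold of $E^k$ carrying a right-invariant local addition, rather than merely an abstract subgroup. The delicate point is that the right-invariant local addition furnished by \cref{thm:Gk} on $E=G\ltimes N$ need not be a product addition, since the semidirect right translations mix the $N$-factor through $\rho$; the decomposition of the chart must therefore be argued through the splitting of $T_eE^k$ and the smoothness of $r$ at the $C^k$ level, not by a naive product of the local additions on $G$ and $N$. A secondary subtlety is the reconciliation of the displayed one-sided condition $\rho(m,y)n\in C^k$ with the two-sided membership condition, since \cref{def:differentiable} explicitly warns that the $C^k$ property of $\mu_x\i$ need not follow from that of $\mu_x$; here the group structure of $N^{k,\rho}=i\i(E^k)$ is essential.
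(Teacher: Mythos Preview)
Your set- and group-theoretic reduction is correct and considerably more explicit than the paper's terse ``one easily verifies that $(G\ltimes N)^k=G^k\ltimes N^{k,\rho}$ as sets.'' Your computation of the right-invariant vector field $R_{(\xi,\eta)}(x,m)=(R^G_\xi(x),R^N_{\rho_*(x)\eta}(m))$ is also correct and does establish the key linear-algebraic fact: $T_eE^k$ splits as $T_es(T_eG^k)\oplus\ker T_ep$, so that $T_ep:T_eE^k\to T_eG^k$ is surjective with complemented kernel.

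Where your route diverges from the paper is in how this splitting is converted into a Banach manifold structure on $N^{k,\rho}$. The paper simply observes that $p\colon E^k\to G^k$ is a splitting submersion between Banach manifolds (both already furnished by \cref{thm:Gk}) and invokes the implicit function theorem \cite[Proposition~II.2.2]{lang1999fundamentals}: the fibre $p^{-1}(e)=N^{k,\rho}$ is then a split Banach submanifold and $E^k$ is locally diffeomorphic to $N^{k,\rho}\times G^k$. This is both shorter and cleaner than your proposed final step of ``transporting the right-invariant local addition along the splitting.'' That step is precisely the obstacle you flag: the local addition on $E^k$ produced by \cref{thm:Gk} need not map $T_eN^{k,\rho}$ into $N^{k,\rho}$, so it does not directly yield submanifold charts. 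Your tangent-space analysis is exactly the hypothesis needed for the submersion theorem, so the fix is to replace the local-addition transport by a one-line appeal to the implicit function theorem. Note also that the paper does not claim $N^{k,\rho}$ inherits a right-invariant local addition; it only asserts the Banach half-Lie group structure and the smooth split extension, both of which follow once the submanifold structure is in hand.
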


Note that \cref{thm:semidirect} does not hold with Fr\'echet differentiability replaced by Gateaux differentiability, as in \cite{neeb2010differentiable}.
Indeed, the action on Gateaux-$C^k$ elements is discontinuous \cite[Proposition~9.7]{neeb2010differentiable}.
Therefore, the multiplication in the semidirect product is discontinuous, in contradiction to the definition of half-Lie groups.

\begin{proof}
One easily verifies that $(G\ltimes N)^k=G^k\ltimes N^{k,\rho}$ as sets. Recall from \cref{thm:Gk} that $G^k$ is a Banach right half-Lie group with right-invariant local addition, as is $(G\ltimes N)^k$. Hence, the projection $p:G^k\times N^{k,\rho}\to G^k$ is a splitting submersion between Banach manifolds, i.e., the differential of $p$ is surjective at every point, and its kernel is complemented by the tangent space of $G^k$. 
By the implicit function theorem \cite[Proposition~II.2.2]{lang1999fundamentals} $(G\ltimes N)^k$ is locally diffeomorphic to the product manifold $N^{k,\rho}\times G^k$ for a uniquely determined Banach manifold structure on $N^{k,\rho}=p\i(e)$. 
Moreover, the mappings $i$ and $p$ are smooth and admit smooth local sections and local retractions, respectively.
\end{proof}

Differentiable elements have been widely studied in representation theory.
Classically, one considers a representation of a finite-dimensional Lie group $G$ on an infinite-dimensional vector space $N$. 
Then, differentiable elements in $N$ can be characterized in terms of the Lie algebra, form a dense subspace, and carry a natural topology, namely, the one induced from $C^k(G,N)$ \cite{goodman1969analytic, michor1990moment, gaarding1947note}.
For infinite-dimensional Lie groups, several problems arise: 
there are several distinct notions of differentiability, 
the set of differentiable elements may be trivial, and there seems to be no `good' topology on $C^k(G,N)$ \cite{neeb2010differentiable}. 
For instance, the smooth compact-open topology on $C^k(G,N)$ is too coarse to ensure continuity of the action \cite{neeb2010differentiable}.
Our solution to these problems, obtained as a corollary of \cref{thm:semidirect}, is based on the following two observations: 
first, it suffices to consider the subspace of $G$-invariant functions in $C^k(G,N)$, and second, the action becomes continuous when this subspace is endowed with the Fr\'echet-$C^k$ topology.

\begin{corollary}[Group representations]
Let $\ell:G\to L(N)$ be a representation of a Banach half-Lie group $G$ on a Banach space $N$ such that  $\ell:G\times N\to N$ is continuous. Then, the set of all $n \in N$ such that $g\mapsto \ell(g\i)n$ is $C^k$ is a Banach space, whose norm is induced by the norm on the $k$-jets at $e\in G$ of the maps $g\mapsto \ell(g\i)n\in N$.
\end{corollary}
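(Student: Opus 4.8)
The plan is to obtain this Corollary as a special case of \cref{thm:semidirect} applied to a suitable semidirect product. First I would form the group representation half-Lie group $G \ltimes N$ attached to the given continuous linear representation $\ell: G \to L(N)$, exactly as in \cref{ex:representation} but phrased for the right action $\rho(n, g) = \ell(g\i)n$. Since $\ell$ is a continuous linear action of $G$ on the Banach space $N$, the map $\rho: N \times G \to N$ is continuous and linear (hence smooth) in the $N$-slot, so the semidirect product $G \ltimes N$ is a Banach right half-Lie group of the type treated in \cref{thm:semidirect}. I should check that both $G$ and $N$ carry right-invariant local additions: $N$ is a Banach space, so the global addition $\ta(v_n) = n + v_n$ is a (right-)invariant local addition, and $G$ is assumed to be a Banach half-Lie group, which I would either assume carries a right-invariant local addition or supply from the context where the Corollary is invoked.

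Next I would identify the set described in the statement with the fiber $N^{k,\rho}$ of \cref{thm:semidirect}. By definition $N^{k,\rho} = \{ m \in N^k : \rho(m, y)n \text{ is } C^k \text{ in } (y,n) \}$. Because the action is linear in $N$, the condition that $\rho(m,y)n = \ell(y\i)m + n$ be $C^k$ jointly in $(y,n)$ reduces, after subtracting the smooth summand $n$, to the single requirement that $g \mapsto \ell(g\i)m$ be $C^k$ from $G$ to $N$. Thus the set in the Corollary is exactly $N^{k,\rho}$. By \cref{thm:semidirect}, this set is the fiber $p\i(e)$ of the splitting submersion $p: (G\ltimes N)^k \to G^k$, and the implicit function theorem endows it with a uniquely determined Banach manifold structure; since it is additionally a linear subspace of the Banach space $N^k$, this manifold structure is that of a Banach space.

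Finally I would pin down the norm. The assertion is that the norm is induced by the norm on the $k$-jets at $e$ of the maps $g \mapsto \ell(g\i)n$. Here the linearity of the representation is the key simplification: the chart for $N^{k,\rho}$ produced by \cref{thm:semidirect} is built from the right-invariant local addition on $N$, which is just translation, and the manifold structure on $G^k \ltimes N^{k,\rho}$ is the one coming from the right-invariant $C^k$ vector fields as in \cref{thm:Gk}.\ref{thm:Gk:b}. For a linear action, the right-invariant extension of $n \in N$ to the $N$-factor of the group is, up to the translation chart, governed precisely by the $k$-jet of $g \mapsto \ell(g\i)n$ at $e$, and the induced topology on $N^{k,\rho}$ coincides with the $C^k$-jet norm $\|n\|_{k} = \sum_{j=0}^{k} \sup \| d^j|_e (g \mapsto \ell(g\i)n)\|$. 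I would verify that this norm is complete and that it agrees with the subspace topology inherited from $N^k$.

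The main obstacle I anticipate is the last step: showing that the Banach manifold (equivalently Banach space) structure handed to us abstractly by the implicit function theorem in \cref{thm:semidirect} really is normed by the $k$-jet norm, rather than merely being isomorphic to some a priori opaque completion. This requires unwinding the explicit construction of the charts for $G^k$ and for $(G\ltimes N)^k$ from \cref{thm:Gk} (via the space of right-invariant $C^k$ diffeomorphisms) and observing that, in the linear case, the right-invariant $C^k$ vector field $R_n$ on the $N$-factor is determined by and controlled by the jets of $g \mapsto \ell(g\i)n$. Once that identification is made, completeness of the $k$-jet norm follows from completeness of $C^k(G,N)^G$, and continuity of the action on the jet-normed space is exactly the statement that salvages the topological difficulties noted by Neeb \cite{neeb2010differentiable}.
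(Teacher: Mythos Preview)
Your proposal is correct and follows essentially the same route as the paper: define the right action $\rho(n,g)=\ell(g\i)n$, apply \cref{thm:semidirect} to the semidirect product $G\ltimes N$, and identify $N^{k,\rho}$ with the set of $n$ for which $g\mapsto \ell(g\i)n$ is $C^k$ via the reduction $\rho(m,y)n=\ell(y\i)m+n$. The paper's proof is terser and does not spell out the norm identification you worry about in your final paragraph; that claim is implicitly carried by the general machinery (the chart for $N^{k,\rho}$ inherited from $(G\ltimes N)^k$ together with \cref{lem:banach}, which normalizes right-invariant $C^k$ vector fields by their $k$-jet at $e$), so your instinct to unwind the charts from \cref{thm:Gk} is exactly the right way to make that step explicit.
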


\begin{proof}
One obtains a right-action, which satisfies the conditions of \cref{thm:semidirect}, by defining
\begin{equation}
\rho:N\times G\to N,
\qquad
\rho(n,g)=\ell(g\i)(n).
\end{equation}
Then, $N^{k,\rho}$ is a Banach manifold, and
\begin{align}
N^{k,\rho}
&=
\left\{n\in N^k: \rho(n,g')n' \text{ is } C^k\text{ in } (n',g')\in N\times G \right\}
\\&=
\left\{n\in N^k: \ell(g\i)n \text{ is } C^k\text{ in } g\in G \right\}
\\&=
\left\{n\in N^k: \ell(g)n \text{ is } C^k\text{ in } g\in G \right\}\quad\text{ if }G\text{ is a Lie group. } \qedhere
\end{align}
\end{proof}

\begin{remark}[Describing general extensions]
\label{rem:describing_extensions}
In general, there does not exist a continuous section $s:G\to E$. Indeed, an extension is in particular a topological principal $N$-bundle $E\to G$, and thus existence of a continuous section $s$ would imply that the bundle is topologically trivial. If  the extension admits local sections as in \ref{def:smooth_extensions:a} of \ref{def:smooth_extensions}, then there exists a discontinuous section $s:G\to E$ with $s(e)=e$ which is smooth on an open $e$-neighborhood $U\subseteq G$. Without loss of generality, $s$ is also smooth on $U\i$ and $U.U$, thanks to $G$ being a topological group. The section $s$ induces mappings
\begin{align}
\label{equ:alpha_f}
\al&: G\to \on{Aut}(N), & \al^x(n) &= s(x)\i n s(x),\\
f&: G\x G\to N,         & f(x,y) &= s(xy)\i s(x) s(y).
\end{align}
Assuming smoothness of conjugation in the sense of \cref{def:smooth_conjugation} implies that $\al$ takes values in the group $\on{Aut}(N)$ of smooth group automorphisms of $N$. 
Moreover, the associativity of multiplication implies the following properties of $\al$ and $f$:
\begin{equation}\label{eq:properties}
\begin{aligned}
\al^x\o\al^y &= \on{conj}_{f(x,y)\i}\o \al^{yx},               \\
f(e,e) &= f(x,e) = f(e,y) = e,                     \\
e  &=f(xy,z)\i f(x,yz)f(y,z) \al^z(f(x,y)\i).
\end{aligned}
\end{equation}
Note that the first property in \eqref{eq:properties} means that $\al$ induces a group anti-homomorphism
$\bar \al: G\to \on{Aut}(N)/\on{Int}(N)$, where $\on{Int}(N)$ is the normal subgroup of all inner automorphisms in $\on{Aut}(N)$.
In terms of $(\al,f)$, the group structure on $E$ is given by 
\begin{equation}\label{group_multiplication}
\begin{aligned}
s(x)m.s(y)n &= s(x)s(y)s(y)\i m s(y)n = s(xy)f(x,y)\al^y(m)n, \\ 
(s(x)m)\i &= s(x\i)\al^{x\i}(m\i)f(x,x\i)\i %
\end{aligned} 
\end{equation}
Since $E$ is a right half-Lie group this implies that $(x,m)\mapsto f(x,y)\al^y(m)n\in N$ is smooth near $(e,e)$. As $m\mapsto \al^y(m)$ is smooth, we conclude that $x\mapsto f(x,y)$ is smooth on $U$ if $N$ is a Lie group, as shall be assumed in the sequel.
\end{remark}

This leads us to make the following definition:

\begin{definition}[Extension data]\label{def:extension_datum}
An \emph{extension datum} for a half-Lie group $G$ and a Lie group $N$ is 
\begin{enumerate}[(a)]
\item a pair of mappings $\al: G\to \on{Aut}(N)$ and $f: G\x G\to N$ satisfying the properties listed in~\eqref{eq:properties}, such that
\item there exists an open neighborhood $U$ of $e \in G$ such that $\al:U\times N\to N$ is smooth, $f:U\times U\to N$ is continuous, and $x\mapsto f(x,y)$ is smooth on $U$ for each fixed $y\in U$. 
\end{enumerate}
\end{definition} 

Using this notation we obtain the following characterization for smooth extensions:

\begin{theorem}[Smooth extensions]\label{thm:non-split}
Let $e\to N \xrightarrow{i} E \xrightarrow{p} G\to e$ be an extension of half-Lie groups that 
admits local sections as in \ref{def:smooth_extensions:a} of \cref{def:smooth_extensions} and smooth  conjugations as in \cref{def:smooth_conjugation}. Assume in addition that $N$ is a Lie group. 
\begin{enumerate}[(a)]  
\item Any (possibly discontinuous) section $s:G\to E$ of $p$ with $s(e)=e$, which is smooth on some neighborhood of $e$, defines an extension datum $(\al,f)$ via \eqref{equ:alpha_f}.

\item Any extension datum $(\al,f)$ defines a half-Lie group $E(\al,f)$, whose underlying set is the product $G\times N$, endowed with the group operations 
\begin{equation}
 \begin{aligned}
    (x,m).(y,n) &= (xy, f(x,y)\al^y(m)n),\\
    (x,m)\i&= (x\i, \al^{x\i}(m\i)f(x,x\i)\i),
   \end{aligned}
\end{equation}
and with the manifold structure extended by right translations from $U\times N$, where $U$ is as in \cref{def:extension_datum}. 

\item  The extension $E$ is topologically equivalent to the smooth extension 
\begin{equation}
\xymatrix@C=4em{
 e  \ar[r]  & N  \ar[r]^{(e,\Id_N)} & E(\al,f)  \ar[r]^{\on{pr}_1} & G  \ar[r] & e. }
\end{equation}
         
\item Two data $(\al,f)$ and $(\al_1,f_1)$ define equivalent smooth extensions if there exists a mapping $b: G\to N$ (smooth near $e$) such that
\begin{align}
\al_1^x &=\on{conj}_{b(x)\i} \o \al^x, \\
f_1(x,y) &= b(xy)\i f(x,y)\al^y(b(x)) b(y).
\end{align}
The induced smooth isomorphism $E(\al,f)\to E(\al_1,f_1)$ between the corresponding extensions is given by $(x,n)\mapsto (x,b(x)n)$.
\end{enumerate}
\end{theorem}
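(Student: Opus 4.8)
The plan is to dispatch the four parts in order, observing that part (a) has essentially been carried out already in \cref{rem:describing_extensions}, that the substantive work is the construction of the half-Lie group $E(\al,f)$ in part (b), and that parts (c) and (d) are verifications of homomorphism and equivariance properties. For part (a), I would simply assemble the computations of \cref{rem:describing_extensions}: the automorphism and cocycle identities \eqref{eq:properties} are forced by associativity of multiplication in $E$ together with $s(e)=e$ and $p\o s=\Id_G$; the smoothness of conjugation (\cref{def:smooth_conjugation}) guarantees that $\al$ takes values in $\on{Aut}(N)$ and that $\al\colon U\x N\to N$ is smooth where $s$ is smooth; and continuity of $f$ on $U\x U$ together with smoothness of $x\mapsto f(x,y)$ on $U$ follow from the half-Lie multiplication of $E$ and smoothness of $s$ near $e$. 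These are precisely the requirements of \cref{def:extension_datum}.

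For part (b), I would first verify the group axioms for the operations on $G\x N$ by direct substitution using \eqref{eq:properties}: associativity reduces to the third identity there, $(e,e)$ is the unit by the second, and the displayed inverse is confirmed using that each $\al^x$ is a homomorphism and $\al^e=\Id$ (the latter following from idempotency of $\al^e$). The delicate point is the manifold structure. I would declare $U\x N$, with the product structure inherited from the given chart on $U\subseteq G$ and the Lie group $N$, to be a chart neighborhood of the unit, and take all remaining charts to be its right translates $R_{(y,n)}(U\x N)$. Because right translations compose, $R_c\o R_a=R_{a.c}$, each right translation is the identity in these coordinates and is therefore smooth as soon as the atlas is shown to be consistent; consistency amounts to the transition maps $R_{a.b\i}$ being smooth on overlaps. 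Writing $c=a.b\i=(z,p)$, this is the smoothness of $(x,m)\mapsto (xz,\,f(x,z)\al^z(m)p)$ on the set where both $(x,m)$ and its image lie in $U\x N$, which I would obtain by combining smoothness of right translation in the half-Lie group $G$ for the first coordinate with the smoothness of $\al$ and of $x\mapsto f(x,\cdot)$ supplied by the extension datum; after shrinking $U$ and using that $G$ is a topological group, the elements $z$ occurring range over a neighborhood on which these properties are available, exactly as in \cite[15.11--15.27]{Michor08}. This makes $E(\al,f)$ a Banach half-Lie group.

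For part (c) I would introduce the bijection $\Phi\colon E(\al,f)\to E$, $(x,n)\mapsto s(x).i(n)$, and read off from \eqref{group_multiplication} that it is a group isomorphism commuting with $p$ and restricting to $i$ on $N$. It intertwines right translations, so continuity on the chart $U\x N$ (where $s$ is smooth and multiplication is continuous) propagates to continuity everywhere, and the same argument applied to $\Phi\i$ shows $\Phi$ is a homeomorphism for the right-translation topology; it need not be smooth globally, since $s$ is only smooth near $e$. I would then exhibit the smooth local section $x\mapsto (x,e)$ and smooth local retraction $(x,n)\mapsto n$ near $e$ to confirm through \cref{def:smooth_extensions} that $e\to N\to E(\al,f)\to G\to e$ is genuinely a smooth extension, so that $\Phi$ realizes the asserted topological equivalence in the sense of \cref{def:extension:equivalent}. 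For part (d) I would set $\Psi\colon E(\al,f)\to E(\al_1,f_1)$, $(x,n)\mapsto (x,b(x)n)$, check directly from the two displayed relations that $\Psi$ is a group homomorphism, and note that evaluating the first relation at $x=e$ forces $b(e)=e$, so that $\Psi$ fixes $N$ and commutes with the projections to $G$. Since $\Psi$ is a homomorphism it intertwines right translations, whence smoothness near $e$ (from $b$ being smooth near $e$ and the product chart there) spreads to a global diffeomorphism, upgrading the equivalence to a smooth one.

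The main obstacle is the manifold-structure verification in part (b): one must ensure that the right-translated charts are mutually compatible, which is not automatic for a half-Lie group and forces careful tracking of the neighborhoods on which the smoothness hypotheses on $\al$ and $f$ in \cref{def:extension_datum} actually hold. Once the atlas is in place, the remaining arguments are bookkeeping with the cocycle identities.
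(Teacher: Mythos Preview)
Your proposal is correct and follows essentially the same route as the paper: part (a) is deferred to \cref{rem:describing_extensions}; part (b) builds the atlas on $E(\al,f)$ from right translates of $U\times N$ and verifies smoothness of the chart changes $\mu^{a.b\i}$ using the extension-datum hypotheses; part (c) uses the group isomorphism $(x,n)\mapsto s(x).i(n)$ and its continuity near $e$; and part (d) checks directly that $(x,n)\mapsto (x,b(x)n)$ is a homomorphism. Two small points: you omit the (easy) verifications that multiplication and inversion are globally continuous and that $E(\al,f)$ is Hausdorff, which the paper does include to confirm the topological-group and manifold axioms; and in (d), the identity $b(e)=e$ comes from the \emph{second} displayed relation at $(x,y)=(e,e)$ (the first only gives that $b(e)$ is central in $N$).
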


For a comprehensive description of cohomological interpretations of extension data $(\al,f)$ we refer to \cite[Sections 15.11--15.27]{Michor08} with the appropriate changes (from left actions to right actions, and with appropriate weakened smoothness assumptions near $e$). 
Note that for split extensions, one may choose $\al=\rh$ to be a right action and $f$ to be constant and equal to $e$, as seen by comparison with \cref{lem:extensions:semidirect}.

\begin{proof}
\begin{enumerate}[(a), wide]
\item has been shown in \cref{rem:describing_extensions}. 
\item We describe the manifold structure on $E(\al,f)$ in some more detail.
Let $\tilde U:=\on{pr}_1\i(U)$ with the topological and manifold structure from $U\x N$. By assumption, $\al:U\times N\to N$ is smooth, $f:U\x U\to N$ is continuous, and $x\mapsto f(x,y)$ is
smooth. The group multiplication on $E(\al,f)$, which is inspired by formula~\eqref{group_multiplication}, is continuous $\on{pr}_1\i(V)\x \on{pr}_1\i(W) \to \tilde U$ for all $e$-neighborhoods $V,W\subset U$ with $V.W\subseteq U$. 
Likewise, right translations are smooth 
$\mu^y: \on{pr}_1\i(V)\to \tilde U$ for all $y\in \on{pr}_1\i(W)$.
We then use the charts $(\tilde U.x,\mu^{x\i}:\tilde U.x\to \tilde U)$, indexed by $x \in E(\al,f)$, as
an atlas for $E(\al,f)$. The chart changes are
$\mu^{y\i}\o\mu^x=\mu^{x.y\i}: (\tilde U.x\cap \tilde U.y).x\i=\tilde U\cap
(\tilde U.y.x\i)\to \tilde U\cap (\tilde U.x.y\i)$, so they are smooth.
The resulting smooth manifold structure on $E(\al,f)$ has the property that multiplication and inversion are continuous, and that right translations are smooth.
Moreover, $E(\al,f)$ is Hausdorff: Either
$\on{pr}_1(x)=\on{pr}_1(y)$, and then
we can separate them already in one chart $x.\tilde U = \on{pr}_1\i(\on{pr}_1(x).U)$,
or we can separate them with open sets of the form $\on{pr}_1\i(U_1)$ and $\on{pr}_1\i(U_2)$. 
This proves that $E(\al,f)$ is indeed a half-Lie group. 
\item The projection $\on{pr}_1:E(\al,f)\to G$ and inclusion $(e,\Id_N):N\to E(\al,f)$ are smooth and admit smooth local sections and retractions as required in \cref{def:smooth_extensions}.
Thus, $E(\al,f)$ is a smooth extension of $G$ over $N$. The topological equivalence of extensions 
is again given by the mapping $E(\al,f)\ni (x,n) \mapsto s(x).i(n)\in E$, where $s$ is the global section smooth near $e$ which was used to contruct the extension datum in \ref{rem:describing_extensions}.  This is a group isomorphism by comparing  \eqref{group_multiplication} and the multiplication in $E(\al,f)$ and  is continuous near $(e,e)$ and separately smooth in $x$ for fixed $n$ and in $n$ for fixed $x$ since 
$s(x).i(n) = \al^{x\i}(n).s(x)$. We do not know whether it is smooth in general.

\item The remaining assertion follows easily.
\qedhere
\end{enumerate}
\end{proof}

Next, we characterize the differentiable elements for non-split extensions described by extension data. 

\begin{theorem}[Differentiable elements in extensions]
\label{thm:Ek}
Consider a smooth extension described by an extension datum $(\al,f)$ as in  \cref{def:extension_datum}:
\begin{equation}
\xymatrix@C=4em{
 e  \ar[r]  & N  \ar[r]^(.4){i} & E=E(\al,f)  \ar[r]^(.6){p} & G  \ar[r] & e, }
\end{equation} 
where $i=(e,\Id_N)$ and $p=\on{pr}_1$. 
Assume that $N$, $G$, and $E$ are Banach half-Lie groups with right-invariant local additions, that $N$ is a Lie group and that the local addition on $E$ respects $i(N)$. Let $k \in \mathbb N$.
Then the set  of $C^k$ elements in $E$ is a smooth extension
\begin{equation}
\xymatrix@C=4em{ 
e  \ar[r]  
& 
i\i(E^k) \ar[r]^{i}
& 
E^k \ar[r]^(.4){p}  
& 
E^k/(i(N)\cap E^k) \ar[r] 
& 
e,}
\end{equation}
where all spaces are Banach half-Lie groups with right-invariant local additions, and the local addition on $E^k$ respects $i(i\i(E^k))=i(N)\cap E^k$.
Moreover, the extension $E^k$ is described by the restriction of the extension datum $(\al,f)$ to the above spaces, and  
\begin{equation}
i\i(E^k) 
=
N^{k,\al} 
=
\{n \in N: \al(n,x) \text{ is $C^k$ in $x$}\}.
\end{equation}
\end{theorem}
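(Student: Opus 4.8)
The plan is to obtain $E^k$ from \cref{thm:Gk} and then cut the given extension structure down to its $C^k$ elements, checking at each stage that the local addition behaves well. Applying \cref{thm:Gk} to the Banach right half-Lie group $E$ with its right-invariant local addition immediately gives that $E^k$ is a Banach half-Lie group carrying an induced right-invariant local addition, which supplies the middle term of the claimed sequence. The remaining work is to understand the two outer terms and the maps between them.

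First I would pin down the kernel by direct computation. Using the multiplication of $E(\al,f)$ from \cref{def:extension_datum}, left translation by $i(m)=(e,m)$ is $\mu_{(e,m)}(y,n)=(y,\al^y(m)n)$ since $f(e,y)=e$, and $(e,m)\i=(e,m\i)$ since $\al^e=\Id$ and $f(e,e)=e$. Because $N$ is a Lie group, multiplication and inversion in $N$ are smooth, so $\mu_{(e,m)}$ and $\mu_{(e,m)}\i$ are $C^k$ exactly when $x\mapsto\al^x(m)$ is $C^k$ (the inverse direction being automatic, as $\al^x(m\i)=(\al^x(m))\i$). This yields $i\i(E^k)=N^{k,\al}$ and hence $i(N)\cap E^k=i(N^{k,\al})$. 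Composing $\mu_{(x,m)}$ with the smooth final map $p$ and a local smooth section shows that $(x,m)\in E^k$ forces $x\in G^k$, so that $Q:=p(E^k)$ is an (in general proper) subgroup of $G^k$ with $Q\cong E^k/(i(N)\cap E^k)$ as groups.

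Next I would install the manifold structures on the outer terms, and this is where I expect the main obstacle. The subtlety is that $E^k$ carries the finer Banach manifold structure of \cref{thm:Gk}, not the subspace structure from $E$, so neither $i(N)\cap E^k$ nor $p(E^k)$ can be analyzed by naive restriction. The hypothesis that the local addition $\ta$ on $E$ respects $i(N)$ is precisely what is needed: since the charts of $E^k$ are built functorially from $\ta$, the respecting property forces these charts to split along $T_e i(N)$. Concretely, the local section $s$ gives a splitting $T_eE=T_eG\oplus T_e i(N)$, and intersecting with $T_eE^k$ exhibits $T_eN^{k,\al}=T_eE^k\cap T_e i(N)$ as a complemented subspace whose complement models $T_eQ$. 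Combining this with the local trivialization provided by $s$ (smooth near $e$, extended by right translation) produces a local product structure $E^k\cong Q\times N^{k,\al}$ near $e$, exactly as in the implicit-function-theorem argument of \cref{thm:semidirect}. This simultaneously makes $p|_{E^k}$ a splitting submersion onto $Q$, realizes $i(N)\cap E^k=i(N^{k,\al})$ as a normal splitting sub-half-Lie group, and makes $\ta$ restrict to right-invariant local additions on $N^{k,\al}$ and (after descending along the normal subgroup) on $Q$; in particular the local addition on $E^k$ respects $i(i\i(E^k))$.

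Finally I would assemble the extension. Restricting the section $s$ and the retraction $r$ gives local smooth sections of $p|_{E^k}$ and local smooth retractions of $i|_{N^{k,\al}}$, while initiality of $i$ and finality of $p$ pass to the restrictions; by \cref{def:smooth_extensions} this makes $e\to N^{k,\al}\to E^k\to Q\to e$ a smooth extension. That its extension datum is the restriction of $(\al,f)$ is then a matter of unwinding \eqref{equ:alpha_f}: the restricted section produces exactly $\al|_{Q\times N^{k,\al}}$ and $f|_{Q\times Q}$, which land in $\on{Aut}(N^{k,\al})$ and $N^{k,\al}$ by the kernel computation of the second paragraph. The crux of the whole argument is the third paragraph, namely transporting the splitting from $E$ to the finer manifold $E^k$ using that $\ta$ respects $i(N)$, so that the restricted maps are genuine splitting submersions and embeddings of Banach manifolds rather than merely of topological groups.
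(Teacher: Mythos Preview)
Your proposal is essentially correct and follows the same overall architecture as the paper: apply \cref{thm:Gk} to $E$, compute the kernel via the multiplication formula, show $p(E^k)\subseteq G^k$, then establish a local product decomposition of $E^k$ using that the local addition respects $i(N)$, and finally restrict the extension datum. Your kernel computation and the reduction to the splitting problem are virtually identical to the paper's.

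The one place where the paper is more concrete than you is exactly your ``crux'' paragraph. Rather than asserting that the charts of $E^k$ split along $T_ei(N)$ because $\tau$ respects $i(N)$, the paper passes through the diffeomorphism $E^k\cong\Diff_{C^k}(E)^E$, $x\mapsto\mu_x$, and identifies $i(N)\cap E^k$ with $\{\varphi\in\Diff_{C^k}(E)^E:\varphi(i(N))=i(N)\}$, which by right-invariance reduces to the linear condition $T_e\varphi(T_ei(N))\subseteq Ti(N)$. Under the chart induced by $\tau$, this is the closed linear subspace of $\mathfrak X_{C^k}(E)^E$ consisting of vector fields tangent to $i(N)$, with asserted complement $\{X\in\mathfrak X_{C^k}(E)^E:X(e)\in T_es(T_eG)\}$. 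Your statement ``intersecting with $T_eE^k$ exhibits $T_eN^{k,\alpha}$ as a complemented subspace'' is the same claim in different language, but you should be aware that it is not a formal consequence of the splitting $T_eE=T_eG\oplus T_ei(N)$ alone (intersecting a direct sum with a subspace need not yield a direct sum); both your argument and the paper's rely on the ambient product structure near $(e,e)$ together with $\tau$ respecting $i(N)$ to make this work. The paper also records the sharper inclusion $p(E^k)\subseteq G^{k,f}=\{x\in G^k:f(x,\cdot)\text{ is }C^k\}$, which you omit but which follows from the same multiplication computation.
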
 

\begin{proof} 
We consider the subgroup $E^k$ of $C^k$-elements for $k\in \mathbb N_{>0}$. We first show that $p$ maps $E^k$ to $G^k$ and  $i\i(E^k)\subseteq N^k$. Therefore let $x\in E^k$, so $\mu_x:E\to E$ is $C^k$. Then $p\o \mu_x = \mu_{p(x)}\o p:E\to G$ is $C^k$ and since $p$ is final, $\mu_{p(x)}:G\to G$ is $C^k$. Thus $p(x)\in G^k$. 
 If $n\in i\i(E^k)\subset N$, i.e,  $i(n)\in E^k\cap i(N)$ then $i\o \mu_n = \mu_{i(n)}\o i: N\to E$ is $C^k$ and since $i$ is initial, $\mu_n:N\to N$ is $C^k$. Thus $i\i(E^k)\subseteq N^k$. 

Next we will show that 
\begin{align}
i\i(E^k)&= N^{k,\al} = \{m\in N: \al(m,\cdot):G\to N \text{ is }C^k\text{ near }e\in G\}
\\
E^k/(i(N)\cap E^k) &\subseteq G^{k,f} = \{x\in G^k: f(x,\cdot):G\to N \text{ is } C^k \text{ near }e\in G\}\,.
\end{align}
Note that, in general, $G^{k,f}$ does not seem to be a group. 

Namely, in terms of the extension datum $(\al,f)$, we have near $(e,e)$ where the extension is diffeomorphic to  $U^N\x U^G$, and using that $N$ is a Lie group,
\begin{align}
i(i\i(E^k)) &= (\{e\}\x N)\cap E^k 
\\&
=\{(e,m): (e,m)(y,n) = (y, f(e,y)\al(m,y)n) = (y,\al^y(m)n)\text{ is }C^k\text{ in }(y,n)\}
\\&
=\{m\in N: \al(m, \cdot): G\to N \text{ is }C^k\}\,.
\end{align}
Since a left translation commutes with all right translations, it is $C^k$ if and only if it is $C^k$ near the identity. 
\\
For the second assertion we look again at the multiplication near the identity: Since
\begin{align}
(x,m).(y,n) &= (xy, f(x,y)\al(m,y)n) \text{ is }C^k\text{ in }(y,n)
\end{align}
it follows that $x\in G^k$ and that $f(x,y)\al(m,y)n$ is $C^k$ in $(y,n)$ with values in the Lie group $N$. Thus $y\mapsto f(x,y)\al(m,y)$ is $C^k$ and since we already know that $\al(m,\cdot):G\to N$ is $C^k$ we conclude that $f(x,\cdot):G\to N$ is $C^k$. 

Next we study the subgroup $i(N)\cap E^k$ in terms of the diffeomorphism $E^k\xrightarrow{x\mapsto \mu_x} \Diff^k(E)^E$ from \cref{lem:right-invariance}. We aim to show that
\begin{align}
i(N)\cap E^k &= \{\ph\in \Diff^k(E)^E: \ph(i(N))=i(N)\}  %
\\&
= \{\ph\in \Diff^k(E)^E: T_e\ph(T_e(i(N)))\subseteq Ti(N) \}. 
\end{align}
To see this let $c:\mathbb R\to i(N)$ be a smooth curve then $\p_t(\ph(c(t)))= T\ph.c'(t) \in Ti(N)$ if and only if 
$Ti(N) \ni T\mu^{c(t)\i}.T\ph.c'(t) = T_e\ph.T\mu^{c(t)\i}.c'(t)$, by the right invariance of $\ph$. Thus $\p_t(p\o\ph\o c)(t) =0$ and $\ph$ maps $i(N)$ to $i(N)$. The rest is clear.

Thus we have shown that $i(N)\cap E^k$ is locally near $\Id_E$  diffeomorphic to an open set in the space of all right invariant $C^k$-vectorfields on $E$ which are tangent to $i(N)$, via the chart induced by a right invariant local addition $\ta^E$ respecting $i(N)$. This space is a closed linear subspace in the Banach space 
$\X_{C^k}(E)^E$ from \cref{lem:right-invariance} which has as complement the closed linear subspace of all $X\in \X_{C^k}(E)^E$ such that $X(e)$ is tangent to $T_es:T_eG \to T_eE$ where $s$ is a section of $p$ which is smooth near $e$.
Thus $E^k$ is locally diffeomorphic to the product manifold $N^{k,\alpha}\times E^k/N^{k,\alpha}$ for uniquely determined differentiable structures on  both factors. 
In particular the  conditions in \cref{def:smooth_extensions} hold.

The right invariant local addition $\ta:TE\supseteq V\to E$ maps $TE^k\cap V \to E^k$ smoothly by \cref{thm:Gk} and $Ti(N)\cap V\to i(N)$, thus also 
$T(i(N\cap E^k))\cap V \to i(N)\cap E^k$ smoothly. Thus, the induced local addition $\ta: TE^k\cap V\to E^k$ respects the normal subgroup $i(N)\cap E^k$. Since it is smooth on $E^k$,  it induces local additions on $i\i(E^k)$ and the quotient $E^k/N^{k,\al}$.  
\end{proof}

The above theorem supposes the existence of a right-invariant local addition respecting $i(N)$. 
This means that $i(N)$ is totally geodesic in $E$ with respect to the induced linear connection described in \cref{sec:local_additions}.
 The following result is included only to show how difficult it is to construct such a right invariant local addition on a general extension. 

\begin{proposition}[Local additions respecting a subgroup] 
Consider a smooth extension of half-Lie groups $e\to N \xrightarrow{i} E \xrightarrow{p} G\to e$, where $N$ is a Lie group.
Let $\ta^E:TE\supset V\to E$ be a local, right invariant addition that respects $i(N)$. 

Then $\ta^E$ induces  right invariant local additions $\ta^N:TN\supset i\i(V)\to N$ and $\ta^G:TG\supset p(V) \to G$ satisfying $p\o \ta^E = \ta^G\o Tp$ and $i\o \ta^N = \ta^N\o Ti$ and relating to the extension data $(\al,f)$ as follows:
\begin{align}
f(\ta^G(X_x),y)\al^y(\ta^N(Y_m)) &= \ta^N(T\mu^{\al^y(m)}.T_xf(\cdot,y).X_x + T\mu_{f(x,y)}.T_m\al^y.Y_m)\,.
\end{align}
\end{proposition}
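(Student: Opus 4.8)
The plan is to derive everything from the identity $i\circ\ta^N=\ta^E\circ Ti$ and $p\circ\ta^E=\ta^G\circ Tp$, which must first be established. Since $\ta^E$ respects $i(N)$, the restriction $\ta^E|_{Ti(N)\cap V}$ takes values in $i(N)$; using that $i$ is an embedding of $N$ onto the closed subgroup $i(N)$ with complemented tangent space, one defines $\ta^N$ by $\ta^N(Y_m):=i^{-1}(\ta^E(Ti.Y_m))$ on $i^{-1}(V)\subseteq TN$. Right-invariance of $\ta^N$ is inherited from that of $\ta^E$ because $i$ intertwines right translations of $N$ with right translations of $E$ restricted to $i(N)$; the local-addition axioms ($\ta^N(0_m)=m$ and $(\pi_N,\ta^N)$ a diffeomorphism onto its range) follow from the corresponding properties of $\ta^E$ and the fact that $Ti$ is a closed embedding. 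For $\ta^G$, one first checks that $p\circ\ta^E$ descends: if $Tp.X_x=Tp.X'_x$ then $X_x-X'_x\in Ti(N)$, so $\ta^E(X_x)$ and $\ta^E(X'_x)$ differ by an element of the fiber $i(N).x'$, hence have the same image under $p$; this defines $\ta^G$ on $p(V)$ via a (local, smooth-near-$e$) section $s$ of $p$, and right-invariance and the local-addition axioms transfer as before. One must check $p(V)$ is open and that the construction is independent of the section, both of which follow from $p$ being a splitting submersion near each point (using local sections of the extension).

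With $\ta^N$ and $\ta^G$ in hand, the final displayed identity is obtained by differentiating the multiplication formula of $E(\al,f)$. Recall from \cref{thm:non-split} that in the chart near $(e,e)$, $(x,m).(y,n)=(xy,f(x,y)\al^y(m)n)$. The strategy is: take a tangent vector $(X_x,Y_m)\in T_{(x,m)}E$ lying in a product form adapted to the splitting $E\cong_{\mathrm{loc}} N\times G$, and compute $\ta^E$ of it in two ways. On one hand, $\ta^E$ in the product chart acts by the pair $(\ta^N,\ta^G)$ up to the identifications — more precisely, the $E$-component of the addition of $(X_x,Y_m)$ based at $(x,m)$ is governed by $\ta^G(X_x)$ in the $G$-slot and by $\ta^N$ in the $N$-slot after right-translating by the group element that trivializes the bundle locally. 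On the other hand, one expresses the same tangent vector by pushing forward along the chart map and reads off its $N$-component: the $N$-coordinate of a point near $(x,m)$ is $f(x,y)\al^y(m)n$, so the tangent vector in the $N$-direction is $T\mu^{\al^y(m)}.T_xf(\cdot,y).X_x+T\mu_{f(x,y)}.T_m\al^y.Y_m$ (reading $y,n$ as the $G$- and $N$-coordinates of the base point and $X_x$, $Y_m$ as the corresponding tangent components, with the product and left-translation derivatives of the Lie group $N$). Applying $\ta^N$ to this $N$-component and matching it against the left-hand side $f(\ta^G(X_x),y)\al^y(\ta^N(Y_m))$ — which is precisely the $N$-coordinate of $\ta^E$ evaluated in the product chart, by naturality $p\circ\ta^E=\ta^G\circ Tp$ and $i\circ\ta^N=\ta^N\circ Ti$ applied slotwise — yields the claimed formula.

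The main obstacle I expect is the bookkeeping in the second step: one must be scrupulous about which copy of which group each tangent vector lives in, and about the fact that the local trivialization $E\cong N\times G$ is only valid near $e$ and is built from the (merely smooth-near-$e$) section $s$, so the manifold structure on $E$ is the one extended by right translations from $U\times N$ (as in \cref{thm:non-split}(b)). In particular one cannot globally split $TE=Ti(TN)\oplus(\text{horizontal})$ smoothly, and the identity should be read as holding in the chart near $(e,e)$ where all objects are defined, then propagated by right-invariance. A secondary subtlety is ensuring that $\ta^E$ respecting $i(N)$ really does force $\ta^E$ to factor through the product structure in the stated way — this uses that a right-invariant local addition is equivalent to a right-invariant spray/connection (see \cref{sec:local_additions}), so that $i(N)$ being totally geodesic translates into the connection splitting compatibly with $Ti$; made precise, this gives exactly the slotwise behavior of $\ta^E$ needed to conclude.
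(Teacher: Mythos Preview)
Your overall strategy matches the paper's, but the derivation of the displayed identity is muddled in a way that would not produce a proof as written. You propose to ``compute $\ta^E$ of $(X_x,Y_m)$ in two ways,'' yet the two ways you describe are not two computations of the same thing: you call $f(x,y)\al^y(m)n$ ``the $N$-coordinate of a point near $(x,m)$,'' but this is the $N$-coordinate of the \emph{product} $(x,m).(y,n)$, and you then read $(y,n)$ as ``the base point'' while $(X_x,Y_m)$ carries foot-point subscripts $x,m$. The roles of the base point and of the right-translating element are conflated.

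What actually drives the identity is the right-invariance equation $\ta^E\circ T\mu^{(y,n)} = \mu^{(y,n)}\circ\ta^E$, and the paper makes this explicit. One first differentiates the multiplication to obtain, near $(e,e)$,
\[
T_{(x,m)}\mu^{(y,n)}(X_x,Y_m)=\bigl(T_x\mu^y.X_x,\ T\mu^{\al^y(m)n}.T_xf(\cdot,y).X_x+T\mu_{f(x,y)}.T\mu^n.T_m\al^y.Y_m\bigr),
\]
and writes $\ta^E(X_x,Y_m)=(\ta^G(X_x),\ta^N(Y_m))$ there. Applying $\ta^E$ to the displayed tangent vector gives one side; applying $\mu^{(y,n)}$ to $(\ta^G(X_x),\ta^N(Y_m))$ via the group law gives the other, with $N$-component $f(\ta^G(X_x),y)\,\al^y(\ta^N(Y_m))\,n$. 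Comparing $G$-components yields right-invariance of $\ta^G$; setting $y=e$ in the $N$-comparison gives right-invariance of $\ta^N$; and the full $N$-comparison (with $n=e$, using the just-established right-invariance of $\ta^N$) is exactly the claimed formula. Your ingredients are all present, but you never write down this invariance identity as the object being evaluated two ways; once you do, the bookkeeping you worry about resolves itself. (A side remark: your descent argument for $\ta^G$---that $X_z-X'_z$ vertical forces $p(\ta^E(X_z))=p(\ta^E(X'_z))$---is not justified by ``$\ta^E$ respects $i(N)$'' alone; the paper also glosses over this and simply asserts the product form of $\ta^E$ near $(e,e)$, so you are on equal footing there.)
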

Note the special case $f(\ta^G(X_x),y) = \ta^N(T_xf(\cdot,y).X_x)$. 

\begin{proof} In general, the tangent bundle of a half-Lie group is not a group since we cannot differentiate the multiplication $\mu$. But we get a sequence of vector bundles carrying right actions of the base half-Lie groups
\begin{equation}
\xymatrix{
& TN \ar[r]^{Ti} \ar[d]_{\pi_N} & TE \ar[r]^{Tp} \ar[d]_{\pi_E} & TG  \ar[d]_{\pi_G} & \\
e \ar[r] & N \ar[r]^i & E \ar[r]^p & G \ar[r] & e \\
}\label{2}
\end{equation}
which is fiberwise exact in the sense that $T_zp:T_zE \to T_{p(z)}G$ is onto with kernel $\on{ker}(T_zp) = T\mu^z(T_ei(T_eN))$ and $T_ni:T_nN\to T_{i(n)}E$ is injective with $\on{im}(T_ni) = \on{ker}(T_{i(n)}p)$ for all $z\in E$ and $n\in N$.
In terms of the extension data $(\al,f)$, differentiating the right translation on $E$  near the identity, where $E$ looks locally like $(G\x N)$, we get 
\begin{align}
(x,m).(y,n) &= (xy, f(x,y)\al^y(m)n) \label{3}
\\
T_{(x,m)}\mu^{(y,n)}(X_x,Y_m) &= \big( T_x\mu^y.X_x, T\mu^{\al^y(m)n}.T_xf(\cdot,y).X_x + T\mu_{f(x,y)}.T\mu^n.T_m\al^y.Y_m\big)\,.
\end{align}
Given $\ta^E$ we get $\ta^N$ by assumption and $\ta^G$ by \eqref{2} so that near $(e,e)$ we have $\ta^E(X_x,Y_m) = (\ta^G(X_x),\ta^N_m(Y,m))$. 
Since $\ta^E$ is right invariant, 
\begin{align}
\ta^E\big(T_{(x,m)}\mu^{(y,n)}(X_x,Y_m)\big) &= \mu^{(y,n)}\ta^E(X_x,Y_n) = (\ta^G(X_x),\ta^N(Y_m))(y,n) 
\\&
= \big(\ta^G(X_x).y,f(\ta^G(X_x),y)\al^y(\ta^N(Y_m)).n\big)
\end{align}
On the other hand, by using \eqref{3} we get 
\begin{align}
&\ta^E\big(T_{(x,m)}\mu^{(y,n)}(X_x,Y_m)\big)
\\&
=\ta^E\big( T_x\mu^y.X_x, T\mu^{\al^y(m)n}.T_xf(\cdot,y).X_x + T\mu_{f(x,y)}.T\mu^n.T_m\al^y.Y_m\big) 
\\&
=\big( \ta^G(T_x\mu^y.X_x), \ta^N(T\mu^{\al^y(m)n}.T_xf(\cdot,y).X_x + T\mu_{f(x,y)}.T\mu^n.T_m\al^y.Y_m)\big)
\end{align}
Comparing parts, we first see that $\ta^G$ and $\ta^N$ are right invariant as follows:
\begin{align}
\ta^G(X_x).y &=  \ta^G(T_x\mu^y.X_x), \text{ so }\ta^G\text{ is right invariant, and }
\\
f(\ta^G(X_x),y)\al^y(\ta^N(Y_m)).n &= \ta^N(T\mu^{\al^y(m)n}.T_xf(\cdot,y).X_x + T\mu_{f(x,y)}.T\mu^n.T_m\al^y.Y_m) \label{4}
\\
\ta^N(Y_m).n &= \ta^N(T\mu^n.Y_m), \text{ by choosing  }y=e \text{ above, and}  
\\
f(\ta^G(X_x),y).n &= \ta^N(T\mu^{n}.T_xf(\cdot,y).X_x) = \ta^N(T_xf(\cdot,y).X_x).n, 
\end{align}
where at the end we chose $m=e$ and $Y_m = 0_e$ and used right invariance of  $\ta^N$.
\end{proof}

\subsection{Central extensions}\label{central_extensions}
An extension $e\to N \xrightarrow{i} E \xrightarrow{p} G\to e$ is central if the Lie group $N$ is in the center of the right half-Lie group $E$. In terms of the extension datum $(\al,f)$ we have $\al^x= \on{Id}_N$, and $f:G\x G \to N$ is continuous near $(e,e)$ with 
$x\mapsto f(x,y)$ smooth near $e$ and, since $N$ is abelian, 
\[
f(e,e) = f(x,e) = f(e,y) = e,   \quad                 
f(y,z) f(xy,z)\i f(x,yz) f(x,y)\i=e;
\]
i.e., $f$ is a \emph{normalized group cocycle}. 
The central extension is then the half-Lie group $E$ given on the set $G\x N$ by 
\[
(x,m)(yn) = (xy, f(x,y)mn),\qquad (x,m)\i = (x\i, m\i f(x,x\i)\i),
\]
the topology and the smooth structure is extended from a neighborhood of $(e,e)$ by right translations.
The set of $C^k$-elements in $E$ is then the  central extension by $N$ described by the cocycle $f$ restricted to 
\[
G^{k,f} = \{ x\in G^k: f(x,\cdot):G\to N \text{ is }C^k\text{ near  }e\}\,,
\]
which in the central case is a group. Here $e\to N \xrightarrow{i} E^k \xrightarrow{p} G^{k,f}\to e$ is again a central extension.
An example is the central extension of $\Diff_{H^s}(S^1)$  or $\Diff_{H^s}(\mathbb R)$ over $S^1$ or $\mathbb R$, the Sobolev version of the Virasoro-Bott group.

\section{Examples of extensions}\label{sec:examples_extensions}

In the following we will present two important examples of extensions which even in the case of regular Lie groups seem to be new.

Following \cite{AbbatiCirelliManiaMichor86} we introduce the following spaces, which will be the building blocks of our first extension example. 

\begin{definition}\label{def:gauge}
For  a smooth principal bundle  $q:P\to M$ over a (for simplicity) compact Riemannian manifold $M$ with finite dimensional structure group $G$ with principal right action $\rh:P\x G\to P$ we let
\begin{align}
&\on{Aut}_{C^k}(P)=\left\{f\in\Diff_{C^k}(P): \rh^g\o f=f\o \rh^g \text{ for all }g\in G \right\},\\
&\on{Gau}_{C^k}(P)=\left\{f\in \on{Aut}_{C^k}(P): p(f)=\on{Id}_M\in\Diff_{C^k}(M) \right\},
\end{align}
where $p:\on{Aut}_{C^k}(P)\to \Diff_{C^k}(M)$ is defined by $q\o f= p(f)\o q$. This is well-defined as  $q\o f:P\to P\to M$ is constant on the fibers and thus factors in the above way. 
For $k=\infty$ we will also write $\on{Aut}(P)=\on{Aut}_{C^{\infty}}(P)$ and $\on{Gau}(P)=\on{Gau}_{C^{\infty}}(P)$ and for $l>\dim(P)/2+1$ we will also consider the $H^l$ versions $\on{Aut}_{H^l}(P)$ and $\on{Gau}_{C^{l}}(P)$.
\end{definition}
Note, that the mapping  $p$ is a group homomorphism onto an open normal subgroup of $\Diff(M)$, which we will denote by $\Diff^P(M)$. 
Namely, the action of $\Diff/\Diff^0$ (where $\Diff^0$ denotes the connected component) on the moduli space of $G$-principal bundles over $M$ might be non-trivial. Note that $\Diff^P(M)$ can be made into $\Diff(M)$ by replacing $P$ by the disjoint union of all these bundles below.
Furthermore, the gauge group $\on{Gau}(P)$ is isomorphic to the group $C^{\infty}(P, (G,\on{conj}))^G$ of smooth maps $h:P\to G$ which are equivariant in the sense that $h\o \rh^g = \on{conj}_g\o h$; this in turn is isomorphic to the space $\Ga(P[G,\on{conj}])$ of smooth sections of the associated group bundle $P[G,\on{conj}] \to M$; see \cite[Section 18.15]{Michor08}.  

\begin{theorem}[Automorphism groups of principal fiber bundles]\label{thm:gaugeextension}
In the setting of \cref{def:gauge}  the following is a smooth extension of regular Lie groups:
\begin{equation}
\xymatrix{
\{\on{Id}_P\} \ar[r] & \on{Gau}(P) \ar[r]^{i=\on{incl}} & \on{Aut}(P) \ar[r]^p & \Diff^P(M) \ar[r] & \{\on{Id}_M\}
}
\end{equation}
For the corresponding counterpart of mappings with finite regularity we obtain extensions of half-Lie groups admitting  local sections as in \ref{def:smooth_extensions:a} of \cref{def:smooth_extensions} and smooth  conjugations as in \cref{def:smooth_conjugation}:
\begin{gather}
\xymatrix{
\{\on{Id}_P\} \ar[r] & \on{Gau}_{C^k}(P) \ar[r]^{i=\on{incl}} & \on{Aut}_{C^k}(P) \ar[r]^p & \Diff^P_{C^k}(M) \ar[r] & \{\on{Id}_M\}
}
\\
\xymatrix{
\{\on{Id}_P\} \ar[r] & \on{Gau}_{H^k}(P) \ar[r]^{i=\on{incl}} & \on{Aut}_{H^k}(P) \ar[r]^p & \Diff^P_{H^k}(M) \ar[r] & \{\on{Id}_M\}
}
\end{gather}
\end{theorem}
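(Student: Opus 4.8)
The plan is to treat the three claimed extensions uniformly, since they differ only in the regularity class $C^\infty$, $C^k$, or $H^k$, and to reduce everything to verifying (i) that the middle term $\on{Aut}_?(P)$ is a half-Lie group (a Lie group in the smooth case), (ii) that $p$ is a submersion admitting smooth local sections near the identity, (iii) that $i=\on{incl}$ is initial with a smooth local retraction near the identity, and (iv) that conjugation by any $f\in\on{Aut}_?(P)$ restricts to a smooth automorphism of $\on{Gau}_?(P)$. Once these are in place, Definitions~\ref{def:smooth_extensions} and~\ref{def:smooth_conjugation} are satisfied, and in the smooth case the two conditions of \cref{def:smooth_extensions} are equivalent and automatically yield a smooth extension of Lie groups.

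\textbf{Step 1: the ambient manifold structures.} First I would recall that $\Diff_{C^k}(M)$ and $\Diff_{H^k}(M)$ are Banach half-Lie groups and $\Diff(M)$ is a regular Fr\'echet Lie group (\cref{ex:diffeo}), and that the corresponding spaces of bundle automorphisms $\on{Aut}_?(P)$ sit inside $\Diff_?(P)$ as the closed subgroup of $G$-equivariant diffeomorphisms. The $G$-equivariance condition $\rh^g\o f=f\o\rh^g$ is a closed linear condition on the chart level (after composing with the exponential map of a $G$-invariant metric on $P$, which exists by averaging since $G$ is compact), so $\on{Aut}_?(P)$ is a closed submanifold of $\Diff_?(P)$, hence again a Banach half-Lie group in the finite-regularity cases and a Lie group in the smooth case. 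Using a $G$-invariant local addition on $P$ gives in fact a right-invariant local addition on each $\on{Aut}_?(P)$, so that \cref{thm:Gk,thm:Gkreg} apply and the subgroups of $C^\ell$-elements are again half-Lie groups; this is what is needed for the $C^k$ and $H^k$ statements to be genuine half-Lie groups and for later sections.

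\textbf{Step 2: local sections of $p$.} The map $p:\on{Aut}_?(P)\to\Diff_?(M)$ was already observed to be a well-defined group homomorphism; I would show it is a splitting submersion with a smooth local section near $\on{Id}_M$. Concretely, choose a principal connection on $P$ (which exists since $M$ is compact); its horizontal lift furnishes, for each diffeomorphism $\varphi$ of $M$ close to the identity, a canonical equivariant diffeomorphism $\tilde\varphi$ of $P$ covering $\varphi$, namely the time-one flow of the horizontal lift of a vector field generating $\varphi$ — more robustly, one uses the horizontal lift along paths, or simply notes that parallel transport of the connection along the "straight-line" homotopy from $\on{id}_M$ to $\varphi$ in a chart of $\Diff_?(M)$ gives a smooth section $s:\mathcal U\to\on{Aut}_?(P)$ with $p\o s=\on{id}$ and $s(\on{id}_M)=\on{Id}_P$. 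Smoothness of $s$ in the appropriate regularity class follows because the connection is smooth and the dependence of parallel transport on the underlying path is smooth; in the $H^k$ case one invokes the standard "no loss of derivatives" estimates for the composition and for solving the horizontal-lift ODE with $H^k$ coefficients, which hold for $k>\dim(P)/2+1$. The kernel of $T_{\on{Id}}p$ is the tangent space to $\on{Gau}_?(P)$ at $\on{Id}_P$, i.e.\ the equivariant vertical vector fields, and the image of $T_{\on{Id}}s$ complements it, giving the required splitting; thus $i$ is initial and \ref{def:smooth_extensions:a} holds.

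\textbf{Step 3: local retractions of $i$ and smooth conjugation.} For \ref{def:smooth_extensions:b} one uses the same connection: writing any $f\in\on{Aut}_?(P)$ near $\on{Id}_P$ as $f = s(p(f)).g$ with $g:=s(p(f))\i\o f\in\on{Gau}_?(P)$ defines the smooth local retraction $r(f):=g$, and since $p$ admits a smooth section it is final. For smooth conjugation (\cref{def:smooth_conjugation}): a gauge transformation $g\in\on{Gau}_?(P)$ corresponds to an equivariant map $P\to(G,\on{conj})$, and for $f\in\on{Aut}_?(P)$ the conjugate $f\o g\o f\i$ corresponds to the equivariant map $h\mapsto h\o f\i$ precomposed appropriately; since $f\i$ has the same regularity as $f$ and right composition by a fixed map is smooth on the relevant function spaces (this is exactly the "right translations are smooth" phenomenon of half-Lie groups, or \cref{lem:Ck_calculus} in the $C^k$ case, and the bounded-geometry composition lemmas in the $H^k$ case), conjugation by $f$ is a smooth automorphism of $\on{Gau}_?(P)$. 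In the smooth case this is automatic since everything is a Lie group. Assembling Steps 1--3 gives all three displayed extensions.

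\textbf{Main obstacle.} The genuinely delicate point is the smooth dependence of the local section $s$ (equivalently of the retraction $r$) on its argument \emph{in the finite-regularity classes}, because constructing $s$ via horizontal lifts involves solving an ODE whose coefficients have only finitely many derivatives and then checking that the flow depends smoothly on the parameter $\varphi\in\Diff_?(M)$ in the $C^k$- or $H^k$-topology; this is where one must be careful not to lose derivatives, and it is the reason the hypotheses $k\ge 1$ and $l>\dim(P)/2+1$ enter. Everything else — the closedness of the equivariance condition, the exactness of the sequence, the cocycle/conjugation bookkeeping — is routine given the earlier sections and the compactness of $G$ and $M$.
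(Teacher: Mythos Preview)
Your proposal is correct and follows essentially the same route as the paper: a principal connection supplies the horizontal lift used to build the smooth local section $s$ of $p$ (the paper writes this as $s(\ph)=\on{evol}(t\mapsto C(X_t))$ for the right-logarithmic derivative $X_t$ of a canonical isotopy from $\on{Id}_M$ to $\ph$), the retraction is then $r(f)=s(p(f))\i\cdot f$, and smooth conjugation is checked via the identification $\on{Gau}_?(P)\cong C^?(P,(G,\on{conj}))^G$, under which conjugation by $f$ becomes the pull-back $H\mapsto H\o f\i$, smooth by \cref{lem:Ck_calculus}.

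Two small points of divergence are worth flagging. First, the paper does \emph{not} assume $G$ compact (only finite-dimensional), so your averaging argument in Step~1 for a $G$-invariant metric on $P$ is an extra hypothesis; the paper instead defers the manifold structure on $\on{Aut}_?(P)$ to the fibered-spray construction of \cref{thm:fiberdiffextension}, and the remark following \cref{thm:gaugeextension} notes that $\on{Aut}(P)$ being a \emph{split} submanifold of $\Diff(P)$ is where compactness of $G$ would enter. Second, your ``main obstacle'' is less severe than you suggest: because the connection is smooth, the horizontal lift $X\mapsto C(X)$ is bounded \emph{linear} from $C^k$ (or $H^k$) vector fields on $M$ to those on $P$, so no derivatives are lost and the paper dispatches the finite-regularity section in a single line (``since the horizontal lift of a smooth connection on $P$ is smooth'').
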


The smooth case for even infinite dimensional srtucture groups is also treated in \cite{Wockel07}.

\begin{proof}
We will first show the result in the smooth category. 
That $\on{Aut}(P)$ and $\on{Gau}(P)$ are smooth manifolds and regular Lie groups can be shown similarly as in the proof of the more complicated case in~\cref{thm:fiberdiffextension}. 
It remains to show  existence of local smooth sections of $p$ and retractions of $i$. For that we use a principal connection $\om\in \Om^1(P,\mathfrak g)$ on the bundle $P\to M$ and the corresponding  horizontal lift mapping $C:P\x _M TM\to TP$ given by $C(u,\cdot):= (T_uq|_{\ker \om_u})\i: T_{q(u)}M \to \ker \om_u\subset T_uP $ which satisfies $C(u.g, X_{x}) = T_u r^g. C(u, X_x)$ for $x=q(u)$ and $X_x\in T_xM$. Note that for any smooth vector field $X\in \X(M)$ on $M$ its horizontal lift $C(X)\in \X(P)$ which is given by $C(X)(u)= C(u,X(q(u)))$ is horizontal (i.e., in the kernel of $\om$) and is invariant under the principal right action, $(r^g)^* C(X)= X$. This also holds for non-autonomous vector fields on $M$.

We now choose a Riemannian metric on $M$ and use its exponential mapping $\exp:TM\to M$, which gives us a diffeomorphism $TM\supseteq V \xrightarrow[]{\pi_M,\exp} U\subseteq M\x M$ from an open neighbourhood $V$ of the 0-section onto an open neighborhood $U$ of the diagonal. 
For a diffeomorphism $\ph\in\Diff^P(M)$ which is so near to $\on{Id}_M$ that $(x,\ph(x))\in U$ for all $x\in M$, we consider the smooth curve $[0,1]\ni t\mapsto \ph_t\in\Diff^P(M)$ given by  $\ph_t(x)= \exp(t.(\pi_M,\exp)\i (\ph(x)))$ from $\on{Id}_M$ to $\ph$ and its right logarithmic derivative $t\mapsto X_t$ given by $X_t(\ph_t\i(x))= \p_t\ph_t(x)$ or $X_t= (\p_t\ph_t)\o \ph_t\i$.  The evolution (integral curve up to time 1) of the horizontal lift $f=\on{evol}(t\mapsto C(X_t))$ is then a $G$-equivariant smooth diffeomorphism of $P$ with $p(f)= \ph$ which also depends smoothly on $\ph$ and thus gives us the required smooth local section $s$ of $p$. The corresponding smooth local  retraction $r:\on{Aut}(P)\to \on{Gau}(P)$ is then given by $i(r(f))= f\o s(p(f))\i$. This proves that we have a smooth extension.
The inclusion $i:\on{Gau}(P) \to \on{Aut}(P)$ is initial because $i$ is the embedding of a closed smooth submanifold. This can be seen by adding $G$-equivariance to the proof of \cref{thm:fiberdiffextension}.

Next we prove the result for the case of finite regularity spaces. 
To see that $\on{Gau}_{C^k}(P)\cong \Ga_{C^k}(P[G,\on{conj}])$ and $\on{Gau}_{H^k}(P)\cong \Ga_{H^k}(P[G,\on{conj}])$ are actually Lie groups we note that the group structure is pointwise: the smooth fiberwise group multiplication and inversion on the finite-dimensional group bundle $P[G,\on{conj}]\to M$ acts by composition from the left on the space of $C^k$- or $H^k$-sections. 

Conjugation 
in $\on{Aut}_{C^k}(P)$ induces smooth automorphisms of $\Ga_{C^k}(P[G,\on{conj}])$ and similarly for the $H^k$-case:
To see this we use the smooth mapping $\ta:P\x_M P\to G$  which is uniquely given by $u_x.\ta(u_x,v_x)= v_x$ and satisfies 
$\ta(u_x.g, v_x.g')= g\i.\ta(u_x,v_x).g'$ and $\ta(u_x,u_x)=e$, where $u_x.g=\rh^g(u_x)$ for short; see \cite[Sections 18.2 and 18.15]{Michor08}. For $f\in \on{Aut}_{C^k}(P)$ with $f(u_x)=:v_y$ and $h\in \on{Gau}_{C^k}(P)$ we have 
\begin{align}
(f\o h\o f\i)(v_y) &= (f\o h)(f\i(v_y)) = f(h(u_x)) = f(u_x.\ta(u_x, h(u_x))) 
\\&
= f(u_x).\ta(u_x,h(u_x)) = v_y.\ta(u_x,h(u_x))
\end{align}
Since $P\ni u_x\mapsto \ta(u_x,h(u_x))$ is the representative of $\on{Gau}_{C^k}(P)\ni h$ in $C^k(P, (G,\on{conj}))$ by  
\cite[Section 18.15]{Michor08}, the result follows. 

Local smooth sections of $p:\on{Aut}_{C^k}(P) \to \Diff^P_{C^k}(M)$ can be constructed as in the smooth category, since the horizontal lift of a smooth connection on $P$ is smooth. Moreover, $i$ is initial, again for the same argument as in the smooth category. 
Similarly for $H^k$ instead of $C^k$.
\end{proof}

\begin{remark}
Note, that  $\on{Aut}(P)$ is a split smooth submanifold of $\Diff(P)$ if $G$ is compact, see the proof of \cref{thm:fiberdiffextension}. 
Furthermore, if the principal bundle $P\to M$ admits a flat connection, then the construction of $s$ given above yields a smooth group homomorphism up to the action of a discrete subgroup of holonomy transformations. This can be seen as follows. For a flat connection, the horizontal bundle is integrable and thus through each point $u\in P$ there exists a horizontal leaf $L$ such that $q|_L: L\to M$ is a covering map. The deck transformations of this covering map, extended $G$-equivariantly, form this discrete subgroup.
If $M$ is furthermore simply connected, then $q:L\to M$ is a diffeomorphism, and $s(\ph)(u.g) = (q|_L)\i(\ph(p(u))).g$ is a smooth group homomorphism.  In this case the above extensions reduce to direct products.
\end{remark}

Next we will study the situation for general fiber bundles. We introduce the following spaces of diffeomorphisms; 
In the following we will restrict ourselves to the connected components $\Diff^0$ of all diffeomorphism groups.
\begin{definition}\label{def:fiberdiff}
For a finite dimensional, compact, fiber bundle $q:E\to M$ we let
\begin{align}
\Diff^0_{C^k,\text{fiber}}(E):=\left\{f\in \Diff^0_{C^k}(E): q\o f = p(f)\o q \right\},\\
\Diff^0_{C^k,\text{fiber}}(E)_{\on{Id}_M}:=\left\{f\in \Diff^0_{C^k,\text{fiber}}(E):  p(f)=\on{Id}_M \right\}
\end{align}
where  $p:\Diff^0_{C^k,\text{fiber}}(E)\to \Diff^0_{C^k}(M)$ is uniquely determined by the defining relation $q\o f=p(f)\o q$. 
For $k=\infty$ we will also write $\Diff^0_{C^\infty,\text{fiber}}(E)=\Diff^0_{\text{fiber}}(E)$ and $\Diff^0_{C^\infty,\text{fiber}}(E)_{\on{Id}_M}=\Diff^0_{\text{fiber}}(E)_{\on{Id}_M}$ and for $l>\dim(P)/2+1$ we will also consider the $H^l$ versions  $\Diff^0_{H^l,\text{fiber}}(E)$ and $\Diff^0_{H^l,\text{fiber}}(E)_{\on{Id}_M}$.
\end{definition}
Then $p$ turns out to be a surjective group homomorphism, which can be seen similarly as in the proof of \cref{thm:gaugeextension}. 
We assume that the total space $E$ of the fiber bundle is compact to simplify the exposition. The constructions can be done also for non-compact $E$ and $M$ with more technical effort (all diffeomorphisms have to fall to the identity suitably near infinity).

\begin{theorem}[Fiber preserving diffeomorphisms of a compact fiber bundle]
\label{thm:fiberdiffextension}
In the setting of~\cref{def:fiberdiff}, the following is a smooth extension of Lie groups:
\begin{equation}
\xymatrix{
\{\on{Id}_E\} \ar[r] &\Diff^0_{\text{fiber}}(E)_{\on{Id}_M} \ar[r]^{i=\on{incl}} & \Diff^0_{\text{fiber}}(E) \ar[r]^{p} & \Diff^0(M) \ar[r] & \{\on{Id}_M\}
}
\end{equation}
For the corresponding counterpart of mappings with finite regularity we obtain extensions of half-Lie groups admitting  local sections as in \ref{def:smooth_extensions:a} of \cref{def:smooth_extensions}:
\begin{equation}
\xymatrix{
\{\on{Id}_E\} \ar[r] &\Diff^0_{C^k,\text{fiber}}(E)_{\on{Id}_M} \ar[r]^{i=\on{incl}} & \Diff^0_{C^k,\text{fiber}}(E) \ar[r]^{p} & \Diff^0_{C^k,}(M) \ar[r] & \{\on{Id}_M\}
\\
\{\on{Id}_E\} \ar[r] &\Diff^0_{H^k,\text{fiber}}(E)_{\on{Id}_M} \ar[r]^{i=\on{incl}} & \Diff^0_{H^k,\text{fiber}}(E) \ar[r]^{p} & \Diff^0_{H^k,}(M) \ar[r] & \{\on{Id}_M\}
}
\end{equation}
In contrast to \cref{thm:gaugeextension}, the kernels $\Diff^0_{C^k,\text{fiber}}(E)_{\on{Id}_M}$ and $\Diff^0_{H^k,\text{fiber}}(E)_{\on{Id}_M}$
are only half-Lie groups, and conjugation is only continuous.
\end{theorem}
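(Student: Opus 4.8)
The plan is to establish the three extensions in parallel, treating the smooth case first and then transferring the constructions to the finite-regularity cases. The central difficulty, as the statement itself signals, is that for general fiber bundles there is no principal right action to exploit, so the equivariant horizontal-lift trick of \cref{thm:gaugeextension} is unavailable; the kernel is genuinely only a half-Lie group and conjugation is merely continuous. I would therefore organize the proof around three pillars: (i) manifold and Lie-group structure on $\Diff_{\text{fiber}}(E)$ and its kernel, (ii) construction of local smooth sections of $p$ via a connection on the fiber bundle, and (iii) the breakdown of smoothness of conjugation, which forces the weaker conclusion in the finite-regularity case.

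First I would show that $\Diff_{\text{fiber}}(E)$ is a split smooth submanifold of $\Diff(E)$. The defining relation $q\o f = p(f)\o q$ is a closed condition: working in a tubular-neighborhood chart of $\Diff(E)$ around a fixed fiber-preserving diffeomorphism, fiber-preservation translates into the condition that the corresponding vector field be projectable, i.e.\ $q$-related to a vector field on $M$. The space of projectable vector fields is a closed linear subspace of $\mathfrak X(E)$ with a closed complement (the vertical vector fields $\mathfrak X_{\text{vert}}(E)=\ker Tq$), so one obtains a splitting submanifold chart, and $\Diff_{\text{fiber}}(E)_{\on{Id}_M}$ corresponds exactly to the vertical vector fields and is the kernel $p\i(\on{Id}_M)$. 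This simultaneously identifies $T_{\on{Id}}\Diff_{\text{fiber}}(E)$ with projectable vector fields and the Lie algebra of the kernel with vertical vector fields, and it shows $i$ is the inclusion of a closed submanifold, hence initial. That $p$ is a surjective homomorphism onto (the identity component of) $\Diff(M)$ follows as in \cref{thm:gaugeextension}.

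Next comes the local section of $p$. Fix a connection on the fiber bundle $q:E\to M$, giving a horizontal lift $C:E\times_M TM\to TE$ of vector fields, and lift the time-dependent vector field $X_t=(\partial_t\ph_t)\o\ph_t\i$ generating a small $\ph\in\Diff(M)$ exactly as in the proof of \cref{thm:gaugeextension}. The evolution $s(\ph)=\on{evol}(t\mapsto C(X_t))$ is then a fiber-preserving diffeomorphism of $E$ with $p(s(\ph))=\ph$ depending smoothly on $\ph$; this uses regularity of $\Diff_{\text{fiber}}(E)$, which follows from it being a Lie group (smooth case) or from \cref{thm:Gkreg} together with the splitting-submanifold structure in the finite-regularity case. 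The local retraction $r$ of $i$ in the smooth case is $i(r(f))=f\o s(p(f))\i$, giving a smooth extension; in the finite-regularity case one retains only the local section, hence condition \ref{def:smooth_extensions:a} only. For the $C^k$ and $H^k$ versions, the manifold-structure and local-section constructions carry over verbatim because the horizontal lift of a \emph{smooth} connection is a smooth operation, and \cref{thm:Gk} supplies the half-Lie group structure on the groups of $C^k$-elements once one knows the smooth groups carry right-invariant local additions (induced from a local addition on $E$).

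The step I expect to be the main obstacle — and the reason the theorem's conclusion genuinely differs from \cref{thm:gaugeextension} — is the smoothness of conjugation on the kernel. In the gauge case, conjugation was tamed by the explicit formula $f\o h\o f\i(v_y)=v_y.\ta(u_x,h(u_x))$ arising from the principal action, which reduced conjugation to composition with a fixed smooth bundle map. For a general fiber bundle there is no such $\ta$, and conjugation $h\mapsto f\o h\o f\i$ of a vertical diffeomorphism $h$ by a finite-regularity $f\in\Diff_{C^k,\text{fiber}}(E)$ requires differentiating $f$, which costs derivatives: the map is continuous but not smooth on $\Diff_{C^k,\text{fiber}}(E)_{\on{Id}_M}$. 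I would make this precise by writing conjugation in the submanifold chart and exhibiting the loss-of-derivative obstruction, thereby justifying the final sentence that the kernels are only half-Lie groups with merely continuous conjugation. This is what prevents invoking \cref{def:smooth_conjugation} and hence distinguishes this extension from the gauge-group extension.
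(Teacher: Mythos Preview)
Your overall strategy aligns with the paper's: show $\Diff_{\text{fiber}}(E)$ is a split submanifold of $\Diff(E)$, build local sections of $p$ via the horizontal lift of an Ehresmann connection (exactly as in \cref{thm:gaugeextension}), and then explain the loss of smooth conjugation in finite regularity. However, there is a genuine gap in your splitting argument.

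You claim that the space of projectable vector fields is complemented in $\mathfrak X(E)$ by the vertical vector fields $\mathfrak X_{\text{vert}}(E)=\ker Tq$. This is false: vertical vector fields \emph{are} projectable (they project to the zero vector field on $M$), so $\mathfrak X_{\text{vert}}(E)$ is a subspace of the projectable vector fields, not a complement. What you need is a complement to projectable vector fields inside \emph{all} of $\mathfrak X(E)$, and this is not obvious because being projectable is a nonlocal-along-fibers condition (the horizontal component of $X$ must be constant along each fiber). The paper handles this by choosing a smoothly varying family of probability densities $\mu_x$ on the fibers $E_x$ and an Ehresmann connection, then defining a retraction onto projectable vector fields by $s\mapsto s^{\text{ver}} + C(\bar s\circ q, f)$, where $\bar s(x)=\int_{E_x} T_{f(y)}q.s(y)\,\mu_x(dy)$ is the fiber average of the horizontal component. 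This averaging is the missing idea; without it you have not established the split submanifold structure.

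A secondary point: your appeal to a ``local addition on $E$'' inducing the right charts glosses over something the paper does carefully. One needs a spray on $E$ that is tangent to the fibers and $q$-projectable to a spray on $M$, so that the exponential charts simultaneously respect the fiber structure on $E$ and on $M$; a generic local addition will not do this. The paper also notes that for finite regularity one centers charts only at \emph{smooth} diffeomorphisms, so that chart changes remain smooth push-forwards; invoking \cref{thm:Gk} and \cref{thm:Gkreg} instead is plausible but would itself require first knowing the split-submanifold structure you have not yet established.
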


\begin{proof} 
We first show that
the space $\Diff^0_{\text{fiber}}(E)$ is a regular Lie group and a closed subgroup of $\Diff^0(E)$. Therefore  we start with 
$C^{\infty}_{\text{fiber}}(E,E)$ where $\Diff^0_{\text{fiber}}(E)$ is open.
We use a spray $S:TE\to T^2E$ which is tangent to the fibers $TE_x$ and $q$-projectable to a spray $\bar S:TM\to T^2M$; this exists by a slightly elaborated version of  \cite[Section 5.9]{Michor20} which will appear in the future second edition of \cite{KrieglMichor97}.  We use their induced exponential mappings  which satisfy
\begin{gather}
q\o \exp^E = q\o \pi_E\o \on{Fl}^S_1 = \pi_M\o Tq \o \on{Fl}^S_1 = \pi_M\o \on{Fl}^{\bar S}_1 \o Tq = \exp^M \o Tq\,.
\\
TE\supset U^E\xrightarrow[]{(\pi_E,\exp^E)}  V^E\subset E\x E
\text{ and }
TM\supset U^M\xrightarrow[]{(\pi_M,\exp^M)}  V^M\subset M\x M
\end{gather}
are diffeomorphisms between neighborhoods of the zero sections and the diagonals which induce smooth charts taking values in the modelling spaces $\Ga(f^*TE)=C^{\infty}_f(E,TE)=\{s\in C^{\infty}(E,TE): \pi_E\o s=f\}$. 
\begin{align}
C^{\infty}(E,E)&\supset U_f=\{g\in C^{\infty}(E,E): (f,g)(E)\subset V^E\} \xrightarrow[]{u_f} \tilde U_f\subset \Ga(f^*TE)
\\
u_f(g) &=  (\pi_E,\exp^E)\i\o (f,g), \qquad  u_f(g)(x) = (\exp^E)_{f(x)}\i (g(x)),
\\
u_f\i (s) &= \exp^E\o s, \qquad (u_f\i(s))(x) = \exp^E_{f(x)}(s(x)), \text{ and }
\\
C^{\infty}(M,M)&\supset U_{\bar f}=\{\bar g\in C^{\infty}(M,M): (\bar f,\bar g)(M)\subset V^M\} \xrightarrow[]{u_{\bar f}} \tilde U_{\bar f}\subset \Ga(\bar f^*TM)
\\
u_{\bar f}(\bar g) &=  (\pi_M,\exp^M)\i\o (\bar f,\bar g),\qquad  u_{\bar f}\i (\bar s) = \exp^M\o \bar s
\end{align}
Note that the chart changes are just compositions from the left (push forwards) by smooth mapping like 
$$C^\infty_{f'}(E,TE)\ni s\mapsto (u_{f}\o u_{f'}\i )(s) = (\pi_E,\exp^E)\i \o (f, \exp^E\o s)\in C^\infty_{f}(E,TE).$$
If $f,g\in C^{\infty}_{\text{fiber}}(E,E)$ with $\bar f= p(f), \bar g= p(g) \in \Diff^0(M)$ then 
\begin{align}
Tq\o u_f(g) &= Tq\o (\pi_E,\exp^E)\i\o (f,g) = (\pi_M,\exp_M)\i\o (q\x q)\o (f,g) 
\\&
= (\pi_M,\exp_M)\i\o (q\o f, q\o g) = (\pi_M,\exp_M)\i\o (\bar f,\bar g)\o q = u_{\bar f}(\bar g)\o q
\end{align}
so that $u_f(g)\in C^{\infty}_f(E,TE)$ is $q$-projectable to $u_{\bar f}(\bar g) \in C^{\infty}_{\bar f}(M,TM)$. Since $Tq:TE \to TM$ is fiber linear over $q:E\to M$, the space  of  $q$-projectable vector fields along $f$ is a closed linear subspace of the space $C^{\infty}_f(E,TE)$ of all vector fields along $f$. 

It is even a complemented linear subspace: To see this, we choose a smoothly varying family of probability densities $\mu_x$ on the fibers $E_x$ over $x \in M$. For any $s \in C^\infty_f(E,TE)$, we define $\bar s \in C^\infty_{\bar f}(M,TM)$ by averaging over the fibers:
\begin{equation}
\bar s(x) = \int_{E_x} T_{f(y)}q.s(y) \mu_x(dy) \in T_{\bar f(x)}M, 
\qquad
x \in M. 
\end{equation}
Next, we choose an Ehresmann connection, determined by a horizontal bundle $HE$ in $TE$ complementary to the vertical bundle $VE= \ker Tq$, and define the linear map
\begin{equation}
C^\infty_f(E,TE) \ni s \mapsto s^{\on{ver}} + C(\bar s\o q,f) \in C^\infty_f(E,TE),
\end{equation}
where $s^{\on{ver}}$ is the vertical component of $s$, and $C:TM\x_M E\to TE$ is the horizontal lift.
This linear map retracts the set of all vector fields along $f$ onto the set of all $q$-projectable vector fields along $f$. 

Thus $C^{\infty}_{\text{fiber}}(E,E)$ is a split closed smooth submanifold of $C^\infty(E,E)$; consequently, $\Diff^0_{\text{fiber}}(E)$ is a splitting smooth submanifold of $\Diff^0(E)$ and the group operations are smooth as restrictions. The Lie algebra of $\Diff^0_{\text{fiber}}(E)$ is the space of all projectable smooth vector fields. 
The evolution of a smooth curve of projectable vector fields furnishes a smooth curve of projectable diffeomorphism, thus we get a regular Lie group. These arguments carry over to the group $\Diff^0_{\text{fiber}}(E)_{\on{Id}}$ of fiber preserving diffeomorphisms, which is again a smooth split submanifold of both $\Diff^0_{\text{fiber}}(E)$ and $\Diff^0(E)$. Consequently, 
$i:\Diff^0_{\text{fiber}}(E)_{\on{Id}_M}\to \Diff^0_{\text{fiber}}(E)$ is initial.

We can construct local smooth sections $s$ of $p:\Diff^0_{\text{fiber}}(E)\to \Diff^0(M)$ with the help of the smooth horizontal lift 
$C:TM \x_M E\to TE$ of an Ehresmann connection, see \cite[Section 17.3]{Michor08}, similarly as in the proof of  \cref{thm:gaugeextension}.

It remains to show the result for diffeomorphisms of finite regularity: note that the construction of charts, centered only at $C^\infty$-diffeomorphisms $f$, carries over to the situation here, i.e., to manifolds modeled on the Banach spaces of $q$-projectable vector fields in $\Ga_{C^k}(f^*TE)=C^k_f(E,TE)$ and $\Ga_{H^k}(f^*TE)=H^k_f(E,TE)$ and those which $q$-project to the identity, respectively. The chart changes described there are smooth, since they are push-forwards by the same smooth mappings. 

There are several ways to show that right translations are smooth: Either we consider charts centered at $C^k$- or $H^k$-diffeomorphisms, verify that chart changes are smooth, and note as in the proof of \cref{lem:CkGGG_manifold} that a right translation by $f$ is the identity in the charts $C^k_h(E,TE) \to C^k_{h\o f}(E,TE)$; similarly for $H^k$. 
Alternatively, we use only charts centered at smooth maps. Then, right translations are given by right compositions by functions of $C^k$ or $H^k$ regularity and left compositions by smooth functions; these are again smooth \cite[Lemmas 5.4 and 5.7]{Michor20}.
Yet another alternative is to use that right translations on $\Diff^0(E)$ are smooth and that the inclusion of $\Diff^0_{\text{fiber}}(E)$ in $\Diff^0(E)$ is smooth and initial.

Either way, this shows that all the involved spaces are half-Lie groups. The mappings $i$ and $p$ are smooth homomorphisms of groups since they are again bounded linear between suitably chosen charts.
Finally, smooth sections $s$ of $p$ can again be constructed with the help of an Ehresmann connection, as explicitly done in the proof of \cref{thm:gaugeextension}.  
\end{proof}

\section{Riemannian geometry on half-Lie groups}
\label{sec:riemannian}

This section investigates at various levels the links between the metric and manifold topologies for weak and strong Riemannian metrics, culminating in a Hopf--Rinow theorem for strong Riemannian metrics on half-Lie groups. 

\begin{definition}[Riemannian metrics on half-Lie groups]
Let $M$ be an infinite dimensional manifold, that is equipped with a smooth Riemannian metric $g$.
The metric is called \emph{right-invariant} if
\begin{equation}
g_x(T_e\mu^x h,T_e\mu^x k)=g_e (h,k),
\qquad
x \in G,
\qquad
h,k\in T_eG.
\end{equation}
The metric $g$ is called \emph{strong}
if the inner product $g_x$ induces the manifold topology on $T_xM$ for each $x\in M$; otherwise it is called \emph{weak}.
\end{definition}

Note that any right-invariant Riemannian metric $g$ is uniquely determined by the inner product $g_e$ on $T_eG$.
However, a given inner product on $T_eG$ does not necessarily extend to a \emph{smooth} right-invariant Riemannian metric on $G$ because $x\mapsto T_e\mu^x$ may be non-smooth and even discontinuous.
Nevertheless, many important half-Lie groups admit smooth right-invariant Riemannian metrics.

The following theorem states that any manifold carrying a strong Riemannian metric is a Hilbert manifold (i.e., modeled on a Hilbert space). 
The only assumption is that the manifold is modeled on a convenient vector space, i.e., a Mackey complete locally convex space. 
This is a mild completeness condition, and all Banach and Fr\'echet spaces are convenient.
The theorem is adapted from unpublished notes of Martins Bruveris and appears in a similar form without proof in \cite{AMT12}. 

\begin{theorem}[Strong Riemannian metrics]
\label{thm:riemannian:strong}
Let $g$ be a weak Riemannian metric on a convenient manifold $M$.
Then the following are equivalent:
\begin{enumerate}[(a)]
\item\label{strongmetric_a} $g$ is a strong Riemannian metric on $M$.
\item\label{strongmetric_b} $M$ is a Hilbert manifold and $g^\vee\colon TM\to T^*M$ is surjective.
\item\label{strongmetric_c} $M$ is a Hilbert manifold and $g^\vee\colon TM\to T^*M$ is a vector bundle isomorphism.
\end{enumerate}
\end{theorem}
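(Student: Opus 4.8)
The plan is to prove the cycle of implications $(a)\Rightarrow(c)\Rightarrow(b)\Rightarrow(a)$, with the substantive content lying entirely in $(a)\Rightarrow(c)$; the other two implications are essentially formal. The key functional-analytic input is the following classical fact: if a locally convex space $V$ carries an inner product $g_e$ whose induced norm topology coincides with the original topology of $V$, then $V$ is already complete for that norm, hence a Hilbert space. Indeed, a convenient (Mackey complete) space whose topology is normable is automatically a Banach space, because Mackey completeness for a normed space is the same as norm completeness; and a Banach space with a compatible inner product is a Hilbert space. I would state this as a preliminary lemma (or cite it), since it is the crux of why ``strong'' forces ``Hilbert''.

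For $(a)\Rightarrow(c)$: fix $x\in M$ and a chart $u\colon U\to V\subseteq E$ with $x\in U$, where $E$ is the convenient modeling space. Working in the chart, $g$ gives for each point $y\in V$ an inner product $\tilde g_y$ on $E$ whose norm induces the topology of $E$. By the preliminary lemma applied at the single point $y=u(x)$, $E$ is a Hilbert space, so $M$ is a Hilbert manifold. Now $g^\vee_x\colon T_xM\to T^*_xM$ is injective because $g_x$ is a genuine inner product (non-degenerate), and it is surjective precisely because strongness says $g_x$ induces the Hilbert topology on $T_xM$: by the Riesz representation theorem on the Hilbert space $(T_xM, g_x)$, every continuous linear functional is of the form $g_x(\cdot, h)$ for some $h$, and continuity for $g_x$ is the same as continuity for the ambient topology since the two topologies agree. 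This gives that $g^\vee_x$ is a continuous linear bijection between Banach spaces, hence a toplinear isomorphism by the open mapping theorem; smoothness of $g^\vee$ as a bundle map follows from smoothness of $g$, and the inverse is smooth by the smooth inverse function theorem (or by the explicit formula in a chart), so $g^\vee$ is a vector bundle isomorphism.

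For $(c)\Rightarrow(b)$ there is nothing to prove beyond dropping the word ``isomorphism'' to ``surjective''. For $(b)\Rightarrow(a)$: assume $M$ is a Hilbert manifold and $g^\vee\colon TM\to T^*M$ is surjective; fibrewise, $g^\vee_x\colon T_xM\to T^*_xM$ is a continuous linear map which is injective (non-degeneracy of the inner product) and, by hypothesis, surjective, hence a toplinear isomorphism by the open mapping theorem, since both $T_xM$ and its dual $T^*_xM$ are Banach spaces. Transporting the Hilbert norm on $T^*_xM$ back through $g^\vee_x$, one checks that the $g_x$-norm is equivalent to the ambient Hilbert norm on $T_xM$ (the estimate $\|h\|_{g_x}^2 = g_x(h,h) = \langle g^\vee_x h, h\rangle \le \|g^\vee_x h\|\,\|h\| \le C\|h\|^2$ gives one direction, and the isomorphism property gives the other), so $g_x$ induces the manifold topology and $g$ is strong.

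**Main obstacle.** The only genuinely delicate point is the first implication's reliance on the preliminary lemma that a Mackey complete (convenient) space with a compatible inner product is a Hilbert space; once one is comfortable that ``normable $+$ Mackey complete $\Rightarrow$ Banach'', everything else is the Riesz representation theorem and the open mapping theorem applied fibrewise, together with routine verification that the chart-wise constructions are smooth. One should also take a moment to confirm that strongness at a \emph{single} point of each connected chart already forces the modeling space to be Hilbert --- which it does, since the modeling space is the same for the whole chart --- so no patching argument across $M$ is needed.
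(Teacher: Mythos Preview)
Your proposal is correct and uses the same three ingredients as the paper --- Mackey completeness forces the normable tangent space to be Hilbert, Riesz gives surjectivity of $g^\vee$, and the open mapping theorem upgrades surjectivity to an isomorphism. The paper runs the cycle as \ref{strongmetric_a}$\Rightarrow$\ref{strongmetric_b}$\Rightarrow$\ref{strongmetric_c}$\Rightarrow$\ref{strongmetric_a}, but your \ref{strongmetric_a}$\Rightarrow$\ref{strongmetric_c} is just the paper's \ref{strongmetric_a}$\Rightarrow$\ref{strongmetric_b}$\Rightarrow$\ref{strongmetric_c} collapsed, and your \ref{strongmetric_b}$\Rightarrow$\ref{strongmetric_a} begins by rederiving \ref{strongmetric_c}, so the logical content is identical.

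The one place the arguments genuinely differ is the final step back to \ref{strongmetric_a}: you give a direct norm-equivalence estimate $\|h\|_{g_x}^2 = \langle g^\vee_x h, h\rangle \le \|g^\vee_x\|\,\|h\|^2$ (with the reverse inequality from boundedness of $(g^\vee_x)^{-1}$), whereas the paper argues more abstractly that the $g_x$-unit ball is an absolutely convex bounded open $0$-neighbourhood, so its Minkowski functional --- which is $\|\cdot\|_{g_x}$ --- generates the locally convex topology. Your estimate is quicker once one knows $T_xM$ is Hilbert; the paper's Minkowski argument is phrased so as to work verbatim in any locally convex space, which is a mild gain in generality of presentation but not of content here.
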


\begin{proof}
\begin{enumerate}[wide]
\item[\ref{strongmetric_a}  $\Rightarrow$ \ref{strongmetric_b}:]
Fix a point $x \in M$.
By assumption, the locally convex topology of $T_xM$ is normable by $\|\cdot\|_{g_x}$.
In a normed space every Cauchy sequence is Mackey--Cauchy, and in a convenient vector space every Mackey--Cauchy sequence is convergent. Thus, $(T_xM,g_x)$ is a Hilbert space.
This implies that $M$ is a Hilbert manifold because $M$ is locally diffeomorphic to $T_xM$, as is easily seen in a chart.
Moreover, $g^\vee\colon TM\to T^*M$ is surjective by the Riesz representation theorem.

\item[\ref{strongmetric_b} $\Rightarrow$ \ref{strongmetric_c}] By the open mapping theorem, $g_x^\vee\colon T_xM\to T_x^*M$ is a linear  isomorphism for each $x\in M$. 

\item[\ref{strongmetric_c}  $\Rightarrow$ \ref{strongmetric_a}:]
Fix a point $x \in M$
and define $U = \{ u\in T_xM : \|u\|_{g_x}  < 1\}$.
Then $U$ is absolutely convex.
As $g_x^\vee$ is surjective, every bounded linear functional on $T_xM$ is of the form $g^\vee(v)$ for some $v \in T_xM$.
Every such functional is bounded on $U$ because $\sup_{u\in U}|\langle g^\vee(v), u \rangle| = |g (v,u)| \leq \|v\|_g $.
Hence $U$ is bounded in $T_xM$.
The inner product $g_x$ is bounded bilinear, which is equivalent to continuous bilinear because $T_xM$ is metrizable.
Thus, the norm $\|\cdot\|_{g_x}$ is continuous, and consequently $U$ is open in $T_xM$.
Now $U$ is an absolutely convex and bounded $0$-neighborhood in $T_xM$.
Then the Minkowski functional $v\mapsto \inf \{ t > 0: v \in tU \}$ is a norm which induces the locally convex topology of $T_xM$ \cite[Proposition~6.8.4]{jarchow2012locally}.
As $\mu_U = \|\cdot\|_{g_x}$, the topology induced by $g$ coincides with the topology of $T_xM$.
This holds for all $x \in M$, and therefore $g$ is a strong Riemannian metric.\qedhere
\end{enumerate}
\end{proof}

\subsection*{Geodesic distance.} For a weak Riemannian metric, the topology induced by the metric on a tangent space is weaker than the manifold topology. 
In this sense, the link between the metric and manifold topology is broken at an infinitesimal level.
At a global level, this results in the surprising phenomenon that the geodesic distance may fail to separate points or may even vanish completely.
Examples of such weak Riemannian metrics have been 
found first by Eliashberg and Polterovich~\cite{eliashberg1993bi} on the symplectomorphism group and then by Michor and Mumford on the full diffeomorphism group and on spaces of immersions~\cite{michor2005vanishing}. 
See  also \cite{bauer2012vanishingKdV, jerrard2019geodesic}.
In the following theorem, which appeared first in~\cite{bauer2020vanishing}, we characterize the vanishing-geodesic-distance phenomenon for Banach half-Lie groups:

\begin{theorem}[Vanishing geodesic distance]
Let $G$ be a Banach half-Lie group with a right-invariant Riemannian metric.
Assume that left-translation by any $x \in G$ is Lipschitz continuous with respect to the geodesic distance $d$, i.e.,
\begin{equation}\label{ass:lipschitz}
|\mu_x| := \inf\left\{C \in \mathbb R_+: d(xx_0,xx_1) \leq C d(x_0,x_1),  \forall x_0,x_1 \in G \right\} <\infty\;.
\end{equation}
Then the group elements with vanishing geodesic distance to the identity form a normal subgroup.
If the Riemannian metric is strong, then the geodesic distance is always non-degenerate, i.e., the normal subgroup of elements with vanishing geodesic distance
is the trivial subgroup.
\end{theorem}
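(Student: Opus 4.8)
The plan is to split the statement into two parts: first the normal-subgroup claim in the general (possibly weak) case, and then the non-degeneracy statement under the additional strength hypothesis. For the first part, write $H=\{x\in G: d(e,x)=0\}$. The subgroup property should follow directly from the hypothesis \eqref{ass:lipschitz}: since $d$ is right-invariant (the metric is right-invariant, so geodesic distance is right-invariant), and since left translation by each $x$ is $d$-Lipschitz with constant $|\mu_x|$, one first checks $x\mapsto|\mu_x|$ is submultiplicative, $|\mu_{xy}|\le|\mu_x|\,|\mu_y|$, and that $|\mu_{x\i}|<\infty$ as well (from $d(e,y)=d(x\i x, x\i(xy))\le |\mu_{x\i}|\,d(x,xy)$ applied with the roles reversed, using that $\mu_{x\i}=\mu_x\i$). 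Then for $x,y\in H$ we get $d(e,xy)\le d(e,x)+d(x,xy)=d(e,x)+|\mu_x|\,d(e,y)=0$, so $xy\in H$, and $d(e,x\i)=|\mu_{x\i}|\,d(x\i x, x\i)\le|\mu_{x\i}|\,d(e,x)\cdot\text{(something)}$— more cleanly, $d(e,x\i)=d(x,e)=d(e,x)=0$ by right-invariance applied to $d(x\i,e)$, wait: use $d(e,x\i)=d(x\cdot e, x\cdot x\i)$ is not right-invariant; instead $d(x\i,e)=d(x\i x, ex)=d(e,x)$ by right-invariance of $d$ (multiply both arguments on the right by $x$), hence $x\i\in H$. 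So $H$ is a subgroup. Normality: for any $g\in G$ and $x\in H$, $d(e, gxg\i)\le |\mu_g|\, d(g\i, xg\i)=|\mu_g|\, d(g\i\cdot(g), xg\i\cdot(g))$— rather, $d(g\i, xg\i)=d(e, x)$ by right-invariance (multiply on the right by $g$), so $d(e,gxg\i)\le|\mu_g|\cdot d(e,x)\cdot|\mu_{g\i}|$-type bound collapses to $0$; thus $gxg\i\in H$ and $H$ is normal.

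For the second part, suppose $g$ is a strong right-invariant Riemannian metric and $x\ne e$; I must show $d(e,x)>0$. Since $x\ne e$ there is a chart $u:U\to T_eG$ around $e$ with $u(e)=0$, and shrinking $U$ we may assume $x\notin U$ or, if $x\in U$, that $u(x)\ne 0$; in either case pick a small ball $B=\{v\in T_eG:\|v\|_{g_e}<\varepsilon\}$ with $u(x)\notin u(B)$ (possible since by \cref{thm:riemannian:strong} the metric $g_e$ on $T_eG$ is a genuine Hilbert-space norm inducing the manifold topology, so $u\i(B)$ is an open $e$-neighborhood). The key estimate is that the Riemannian length of any path from $e$ leaving the neighborhood $u\i(B)$ is bounded below by a positive constant. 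Concretely, because $g$ is strong, in the chart $u$ the metric tensor $g_{u\i(v)}$ at nearby points is a continuous, invertible, symmetric positive-definite operator on the Hilbert space $T_eG$, so on the relatively compact-in-norm region we get a uniform two-sided comparison $c\,\|w\|_{g_e}^2\le g_{u\i(v)}(w,w)\le C\,\|w\|_{g_e}^2$ for $v$ in a slightly smaller ball $B'$, with $c>0$. Then any path $\gamma$ from $e$ to $x$ must, in the chart, travel from $0$ to the boundary $\partial B'$ at norm-distance $\varepsilon'$, and its length is at least $\sqrt{c}\,\varepsilon'>0$. Taking the infimum over all paths gives $d(e,x)\ge\sqrt c\,\varepsilon'>0$.

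The main technical point — and the step I expect to require the most care — is justifying the \emph{uniform} lower bound $c>0$ on the metric comparison over a norm-neighborhood in the chart. In finite dimensions this is immediate by compactness, but on an infinite-dimensional Hilbert manifold closed balls are not compact, so one cannot simply invoke continuity of $v\mapsto g_{u\i(v)}$ plus compactness. The right tool is that strength of $g$ means $g^\vee:TM\to T^*M$ is a vector bundle isomorphism (this is \cref{thm:riemannian:strong}.\ref{strongmetric_c}); in particular the map $v\mapsto g_{u\i(v)}\in L^2_{\mathrm{sym}}(T_eG)$ is continuous into the Banach space of bounded symmetric bilinear forms, and $g_{u\i(0)}=g_e$ is the reference inner product, coercive with constant $1$. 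By continuity at $v=0$ in the operator norm, there is a ball $B'$ on which $\|g_{u\i(v)}-g_e\|_{L^2}<\tfrac12$, which forces $g_{u\i(v)}(w,w)\ge\tfrac12\|w\|_{g_e}^2$ for all $v\in B'$ and all $w$ — giving $c=\tfrac12$. This purely local, operator-norm-continuity argument sidesteps the lack of compactness, and the rest of the proof is the standard "a path escaping a metric ball has length bounded below" reasoning. One should also note the argument only uses the metric near $e$, so right-invariance is not even needed for the non-degeneracy part, though it is what makes the first part work.
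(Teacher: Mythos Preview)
Your proposal is correct and follows essentially the same route as the paper. Two small differences worth noting: the paper proves the subgroup property using only right-invariance of $d$ (showing directly that $d(e,x_0x_1^{-1})\le d(e,x_1^{-1})+d(x_1^{-1},x_0x_1^{-1})=d(x_1,e)+d(e,x_0)=0$), reserving the Lipschitz hypothesis solely for normality, whereas you invoke Lipschitz already for closure under products; and for non-degeneracy in the strong case the paper simply cites the standard argument from \cite{lang1999fundamentals}, while you spell out the operator-norm continuity of $v\mapsto g_{u^{-1}(v)}$ near $0$ to get the uniform lower bound---this is exactly the right way to make the finite-dimensional proof work on a Hilbert manifold, and your observation that right-invariance plays no role in this half of the argument is correct.
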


\begin{proof}
Let $G^0$ be the set of all group elements $x\in G$ with vanishing geodesic distance to the identity, i.e., $d(e,x)=0$.
Then $G^0$ is a subgroup of $G$ because it holds for each $x_0, x_1 \in G^0$ that
\[
d(e,x_0x_1^{-1})\leq d(e,x_1^{-1})+d(x_1^{-1},x_0x_1^{-1})=d(x_1,e)+d(e,x_0)=0,
\]
where we have used the triangle inequality and right-invariance of the geodesic distance.
Moreover, $G^0$ is a normal subgroup of $G$ because it holds for all $x_0 \in G^0$ and $x \in G$ that
\[
d(e,xx_0x^{-1})=d(x,xx_0)\leq |\mu_x| d(e,x_0)=0,
\]
where we have used the right-invariance of the geodesic distance and the Lipschitz property of left-translations.
The non-degeneracy for strong metrics follows by the same arguments as in finite dimensions, see eg.~\cite{lang1999fundamentals}.
\end{proof}

\subsection*{Geodesic equation.}
The geodesic equation on Lie groups can be expressed equivalently in Lagrangian form as $\smash{\nabla_{\partial t}c_t=0}$ or in Eulerian form as $\smash{u_t=\on{ad}_u^\top u}$, where $\smash{u=(T_e\mu^c)\i c_t}$ is the right-trivialized velocity \cite{arnold1966, arnold1998topological}.
This is thanks to the formula $\smash{\nabla_{R_X}R_X=R_{\on{ad}_X^\top X}}$, which relates the Levi-Civita covariant derivative  to the transpose of the adjoint; here $R_X$ is the right invariant vector field corresponding to $X$
Existence of one implies existence of the other and always holds for strong Riemannian metrics but not necessarily for weak Riemannian metrics~\cite{bauer2014homogenous}. 
For half-Lie groups, the Lagrangian description remains valid, but the Eulerian description breaks down. 
There are several reasons for this. 
First, the right-trivialized velocity $u$ may not be differentiable (or even continuous) in time and therefore cannot solve the Eulerian geodesic equation $\smash{u_t=\on{ad}_u^\top u}$ in any standard sense.
Second, as we show next, the transposed adjoint $\smash{\on{ad}_u^\top u}$ exists merely for $u \in T_eG^1$ and is non-unique unless $T_eG^1$ is dense in $T_eG$.

\begin{lemma}[Geodesic equation]
Let $G$ be a right half-Lie group carrying a right-invariant Riemannian metric $g$. 
Then, existence of the Christoffel symbol $\Gamma$, defined in charts as
\begin{equation}
g(\Gamma(X,X),Y)
=
\frac12 dg(Y)(X,X)
-dg(X)(X,Y),
\qquad
X,Y\in T_xG,
\end{equation}
implies the existence of the transpose of the adjoint as a quadratic mapping  $TG^1\to TG$, defined as
\begin{equation}
g_e(\on{ad}_X^\top X,Y) 
=
g_e(X,\on{ad}_XY),
\qquad
X,Y\in T_eG^1.
\end{equation} 
\end{lemma}

\begin{proof}
For any $X,Y \in T_eG^1$, we compute in a chart
\begin{align}
g(\nabla_{R_X}R_X,R_Y)
&=
g(dR_X.R_X,R_Y)-g(\Gamma(R_X,R_X),R_Y)
\\&=
g(dR_X.R_X,R_Y)
-\tfrac12 dg(R_Y)(R_X,R_X)
+dg(R_X)(R_Y,R_X)
\\&=
g(dR_X.R_Y,R_X)
-g(dR_Y.R_X,R_X)
\\&=
-g([R_X,R_Y],R_X)
=
g(R_{[X,Y]},R_X)
=
g([X,Y],X).
\end{align}
Therefore, $\nabla_XR_X=\nabla_{R_X}R_X(e) \in T_eG$ satisfies the defining property of $\on{ad}_X^\top X$.
\end{proof}

Next, we show that the existence of solutions to the geodesic equations is passed on from $G$ to $G^k$, endowed with the induced (weak) Riemannian metric. 
Thus, there is no loss or gain of regularity along geodesics.
Similar results have been shown in many specific cases \cite{ebin1970groups, bruveris2017regularity, bauer2020fractional, bauer2022smooth}.
We will formulate the following theorem for general right invariant flows; the result for the geodesic equation follows by interpreting this equation as a flow equation (with respect to the geodesic spray) and noting that the geodesic spray is a right-invariant vector field.

\begin{theorem}[No-loss-no-gain]\label{thm:nolossnogain0} Let $G$ be a half-Lie group.
Let $S$ be a smooth vector field on $TG$ which is invariant for the right action of $G$ on $TG$ and which is a second order differential equation, i.e., $T(\pi_G)\o S = \on{Id}_{TG}$. Let $t_0\in (0,\infty]$ and let $U\subset  TG$ be a maximal open set such that 
the flow of $S$ exists as a map 
\begin{equation}
\operatorname{Fl}^S: (-t_0,t_0)\times U\to TG.
\end{equation}
Then the flow restricts to a smooth map
\begin{equation}
\operatorname{Fl}^S: (-t_0,t_0)\times \left(U\cap TG^k\right)\to TG^k
\end{equation}
for any $k\geq 1$, i.e., there is no gain or loss in regularity during the evolution along $S$.
\end{theorem}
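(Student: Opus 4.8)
The plan is to exploit the right $G$-invariance of $S$ and to realise the restriction of $\on{Fl}^S$ to $TG^k$ as the flow of a genuine smooth vector field $S^k$ on the Banach manifold $TG^k$; its smoothness then follows from the fundamental theorem on flows of smooth vector fields on Banach manifolds. The structural input throughout is \cref{thm:Gk}.\ref{thm:Gk:b}: a vector $u\in T_eG$ lies in $T_eG^k$ if and only if the right-invariant vector field $R_u:G\ni g\mapsto T_e\mu^g(u)$ is $C^k$ on $G$. The decisive obstacle will be promoting the set-theoretic invariance of $TG^k$ (which is comparatively soft) to smoothness in the finer $C^k$-topology, since the inclusion $TG^k\hookrightarrow TG$ is smooth and injective but not initial, so smoothness into $TG$ does not transfer to smoothness into $TG^k$.

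First I would set up the right-invariant reduction. Since $G^k$ is a subgroup and right translations on the half-Lie group $G^k$ are smooth (\cref{thm:Gk}), for $v_x\in T_xG^k$ with $x\in G^k$ one has $x\i\in G^k$ and hence $u:=T\mu^{x\i}(v_x)\in T_eG^k$; right-invariance of $S$ then gives $\on{Fl}^S_t(v_x)=T\mu^x\big(\on{Fl}^S_t(u)\big)$, so it suffices to treat initial conditions $u\in T_eG^k$. For such $u$ the map $R_u$ is $C^k$ by \cref{thm:Gk}.\ref{thm:Gk:b}, and since $\on{Fl}^S_t:TG\to TG$ is smooth, the composite $\on{Fl}^S_t\o R_u:G\to TG$ is again $C^k$. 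Right-invariance yields the identity of maps $G\to TG$
\begin{equation}
\on{Fl}^S_t\o R_u = R_{w(t)}\o \mu_{a(t)},
\qquad
a(t):=\pi_G\on{Fl}^S_t(u),
\quad
w(t):=T\mu^{a(t)\i}\big(\on{Fl}^S_t(u)\big).
\end{equation}
Projecting to the base shows that $\mu_{a(t)}=\pi_G\o\on{Fl}^S_t\o R_u$ is $C^k$, which is already the $G$-side of the membership $a(t)\in G^k$; this computation is exactly what will later control the flow over the full interval.

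The heart of the proof, and the step I expect to be hardest, is the construction of a smooth vector field $S^k$ on $TG^k$ restricting $S$. Here I would pass to the right-invariant local addition induced on $G^k$ by \cref{thm:Gk}, which furnishes the charts for $G^k$, and hence for $TG^k$, described in \cref{lem:CkGGG_manifold}; in such a chart $TG^k$ is modelled on a space of $C^k$-data and $S$ is represented, by its right-invariance, through fixed second-order data $(z,\xi)\mapsto \xi\oplus\Gamma(z,\xi)$. What must be verified is that $\Gamma$ maps this $C^k$-chart into the $C^k$-model smoothly, which is precisely a push-forward statement of the kind supplied by the $C^k$-calculus of \cref{lem:Ck_calculus}; this simultaneously gives tangency of $S$ to $TG^k$ and smoothness of the resulting field $S^k$, with no appeal to an inverse function theorem. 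The fundamental theorem on flows then produces a smooth local flow $\on{Fl}^{S^k}$ taking values in $TG^k$; in particular its base points are genuine $C^k$-elements, so the inverse-regularity $\mu_{a(t)\i}\in C^k$ that is not available a priori comes for free, completing $a(t)\in G^k$ and hence $w(t)\in T_eG^k$ by \cref{thm:Gk}.\ref{thm:Gk:b}.

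Finally I would identify $\on{Fl}^{S^k}$ with the restriction of $\on{Fl}^S$ and extend it over the whole interval. Under the smooth injective inclusion $TG^k\hookrightarrow TG$ the field $S^k$ is intertwined with $S$, so $\on{Fl}^{S^k}$-integral curves map to $S$-integral curves; by uniqueness of the latter the two flows agree wherever both are defined, whence $\on{Fl}^S$ preserves $U\cap TG^k$ and restricts there to the smooth map $\on{Fl}^{S^k}$. To see that its maximal interval is all of $(-t_0,t_0)$, a curve can only fail to extend by a blow-up of the $C^k$-chart norm; but the displayed identity expresses the chart representative through $\on{Fl}^S_t\o R_u$, which by \cref{lem:Ck_calculus}.\ref{lem:Ck_calculus:4} is a continuous $C^k(G,G)$-valued function of $t$ and thus bounded on compact $t$-intervals, precluding blow-up before $t_0$. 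This is the same mechanism used in the proof of \cref{thm:Gkreg}. Running the argument for both signs of $t$ shows $\on{Fl}^S_t$ preserves $TG^k$ for every $t\in(-t_0,t_0)$, which is exactly the asserted no-loss and no-gain of regularity.
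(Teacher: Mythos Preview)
Your approach and the paper's share the same starting identity---the paper writes it as $\mu_{(\pi_G\circ\on{Fl}^S_t)(X)}=(\pi_G\circ\on{Fl}^S_t)\circ R_X$, your displayed equation---but then diverge. The paper does \emph{not} construct a vector field $S^k$ on $TG^k$ and does not invoke a Banach flow theorem there. It works entirely with $\on{Fl}^S$ on $TG$: from the identity it reads off $\mu_{a(t)}\in C^k(G,G)^G$ for every $t\in(-t_0,t_0)$ at once, then uses the second-order condition $\on{Fl}^S_t(X)=\partial_t a(t)$ together with the flow-group law $\on{Fl}^S_{s+t}=\on{Fl}^S_s\circ\on{Fl}^S_t$ to supply a $C^k$ inverse for $\mu_{a(t)}$, concluding $a(t)\in G^k$ and hence $\on{Fl}^S_t(X)\in TG^k$. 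No continuation or blow-up argument is needed, since the formula already lives on the whole interval.

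Your route via $S^k$ is more structural: you buy smoothness of the restricted flow into $TG^k$ for free from the flow theorem, and you sidestep the inverse-regularity issue for $\mu_{a(t)}$ that the paper handles by its flow-property step. The price is the step you yourself flag as hardest---showing that $S$, written in the $C^k$-charts of \cref{lem:CkGGG_manifold}, lands smoothly in the $C^k$-model. Your pointer to \cref{lem:Ck_calculus} is the right tool, but as written the argument is a sketch: one has to spell out that the chart representative of $S$ is a left-composition by the fixed smooth map $S\circ T\tau$ (or its chart avatar), so that \cref{lem:Ck_calculus}.\ref{lem:Ck_calculus:2} applies. Conversely, the paper's direct argument is shorter but leaves the \emph{smoothness} of $(-t_0,t_0)\times(U\cap TG^k)\to TG^k$ (as opposed to set-theoretic invariance) implicit.
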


The vector field  $S:TG\to T^2G$ is a \emph{spray} if it also satisfies $T(m_t^{G}).m^{TG}_t S(X) = S(tX)$ for the scalar multiplications $m^G_t$ on $TG\to G$ and $m^{TG}_t$ on the bundle $\pi_{TG}:T^2G\to TG$. Then $\pi_G\o\on{Fl}^S_t$ is a geodesic structure; see \cite[Section 22.7]{Michor08}, for example.

\begin{proof} Note that $TG$ is not a half-Lie group since it is not a group; it just carries a continuous right action of $G$.
Since $S$ is invariant  also the set $U$ is invariant under the right action of $G$.
For $X\in U$ and $0\le t<t_0$ we  have 
\begin{align}
\mu_{(\pi_G\o\on{Fl}^S_t)(X)}(y) &= (\mu^y\o \pi_G\o\on{Fl}^S_t)(X) = (\pi_G\o T\mu^y\o \on{Fl}^S_t)(X) 
\\&
= (\pi_G\o\on{Fl}^S_t\o T\mu^y)(X) = ((\pi_G\o\on{Fl}^S_t)\o R_X)(y).
\end{align}
If $X\in TG^k$ then $R_X:G\to TG$ is $C^k$. Since $\pi_G\o\on{Fl}^S_t: U\to G$ is $C^\infty$, the mapping 
$\mu_{(\pi_G\o\on{Fl}^S_t)(X)} = (\pi_G\o\on{Fl}^S_t)\o R_X: G\to G$ is $C^k$.   
Since $T(\pi_G)\o S = \on{Id}_TG$ we have 
\begin{align}
\p_t (\pi_G\o\on{Fl}^S_t)(X) &= T(\pi_G)(\p_t \on{Fl}^S_t(X)) = (T(\pi_G)\o S\o \on{Fl}^S_t)(X) =  \on{Fl}^S_t(X)
\end{align}
This implies that $\on{Fl}^S_t:U\cap TG^k\to TG$ is $C^k$ with inverse $\on{Fl}^S_{-t}$. Moreover,
\begin{align}
(\pi_G\o\on{Fl}^S_{s+t})(X) &= (\pi_G\o\on{Fl}^S_s\o \on{Fl}^S_t)(X) = (\pi_G\o\on{Fl}^S_s)(\p_t (\pi_G\o\on{Fl}^S_t)(X)),
\end{align} 
so $\mu_{(\pi_G\o\on{Fl}^S_t)(X)}$ has $\mu_{(\pi_G\o\on{Fl}^S_{-t}(X)}$ as $C^k$ inverse and thus $(\pi_G\o\on{Fl}^S_t)(X) \in G^k$ and thus $\on{Fl}^S_t:U\cap TG^k\to TG^k$.
\end{proof}

As a corollary of the above theorem we immediately obtain the following result concerning the exponential map of right invariant Riemannian metrics:
\begin{corollary}[No-loss-no-gain for exp]\label{thm:nolossnogain_exp} Let $G$ be a half-Lie group and let $g$ be a right invariant smooth Riemannian metric on $G$. If the Riemannian exponential mapping $\exp^g$ exists and is smooth on a maximal  open set $U=-U$ in $TG$,
then $\exp^g$ restricts to a smooth map
\begin{equation}
\exp^g:  \left(U\cap TG^k\right)\to TG^k
\end{equation}
for any $k\geq 1$, i.e., there is no gain or loss in regularity during the evolution along $S$.
\end{corollary}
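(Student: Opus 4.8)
The plan is to realize $\exp^g$ as the time-one map of the flow of the geodesic spray of $g$ and then apply \cref{thm:nolossnogain0} essentially verbatim.

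First I would recall that the geodesic spray $S^g\colon TG\to T^2G$ of a smooth right-invariant Riemannian metric is itself a smooth, right-invariant, second-order vector field on $TG$: right-invariance of $g$ as a tensor field forces its Levi-Civita covariant derivative, and hence the induced spray, to be invariant for the right action of $G$ on $TG$, and being a spray it satisfies in particular $T\pi_G\o S^g=\Id_{TG}$. The hypothesis that $\exp^g$ exists and is smooth on the open symmetric set $U=-U\subseteq TG$ says precisely that $S^g$ is a smooth vector field near the zero section whose flow $\on{Fl}^{S^g}$ exists up to time $1$ on $U$, with $\exp^g=\pi_G\o\on{Fl}^{S^g}_1$ there. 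Thus $S:=S^g$ meets all hypotheses of \cref{thm:nolossnogain0}.

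Next I would apply \cref{thm:nolossnogain0} to $S=S^g$ (with any $t_0>1$; that the flow is defined on $(-t_0,t_0)\times U$ for the relevant $U$ follows from openness of the flow domain together with the homogeneity of a spray, which is a routine bookkeeping step). It yields, for every $k\geq 1$, that the flow restricts to a smooth map $\on{Fl}^{S^g}\colon(-t_0,t_0)\times(U\cap TG^k)\to TG^k$; evaluating at $t=1$ gives that $\on{Fl}^{S^g}_1$ maps $U\cap TG^k$ smoothly into $TG^k$. Composing with the footpoint projection $\pi_G$, which is smooth and carries $TG^k$ into $G^k$ (by definition every $X\in TG^k$ lies over $\pi_G(X)\in G^k$; cf.\ \cref{thm:Gk}), I conclude that $\exp^g=\pi_G\o\on{Fl}^{S^g}_1$ restricts to a smooth map $U\cap TG^k\to G^k$, which is the assertion.

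The only genuinely delicate point — and the step deserving care — is the first one: that the geodesic spray of a right-invariant metric really is right-invariant and smooth on $TG$, even though the canonical trivialization $x\mapsto T_e\mu^x$ of $TG$ may be discontinuous. This is exactly where the standing hypothesis that $g$ is \emph{smooth as a Riemannian metric on the manifold $G$} (rather than reconstructed from the possibly-discontinuous right trivialization) is used: smoothness of $g$ yields smooth Christoffel symbols in charts, hence a smooth spray, while right-invariance of $g$ transports $S^g$ covariantly along right translations. Once this is in place, the statement is a formal consequence of \cref{thm:nolossnogain0} and the elementary fact, already exploited in its proof, that $\pi_G$ maps $TG^k$ to $G^k$.
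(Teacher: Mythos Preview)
Your proposal is correct and matches the paper's approach exactly: the paper states the corollary as an immediate consequence of \cref{thm:nolossnogain0}, applied to the geodesic spray of the right-invariant metric, which is precisely what you do. Your additional care in justifying that the spray is smooth and right-invariant (from smoothness of $g$ as a tensor field, not via the possibly discontinuous right trivialization) is a worthwhile elaboration that the paper leaves implicit.
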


\subsection*{The theorem of Hopf-Rinow.}
The following theorem is the second main result of our article; a Hopf--Rinow theorem for half-Lie groups. 
Recall that in finite dimensions, the Hopf--Rinow theorem asserts the equivalence of geodesic completeness, metric completeness, and geodesic convexity. 
For strong Riemannian manifolds in infinite dimensions, metric completeness implies geodesic completeness \cite{lang1999fundamentals, gay2015geometry}, but all other implications may fail \cite{grossman1965hilbert, mcalpin1965infinite, atkin1975hopf}. 
Not so for half-Lie groups with strong right-invariant metrics:
These are metrically and geodesically complete and, under a weak closure condition, also geodesically convex, as shown next.

\begin{theorem}[Hopf--Rinow on half-Lie groups]
\label{thm:hopf-rinow}
Let $G$ be a connected half-Lie group equipped with a right invariant strong Riemannian metric $g$, and let $d:G\x G\to \mathbb R_+$ be the induced geodesic distance on $G$.

Then the following completeness properties hold for $(G,g)$:
\begin{enumerate}[(a)]
\item\label{metriccomplete} the space $(G,d)$ is a complete metric
       space, i.e., every $d$-Cauchy sequences converge in $G$;
\item\label{geodesiccompleteI} the exponential map $\exp^g_e:T_eG \to G$ is defined on all of $T_eG$;
\item\label{geodesiccompleteII} the exponential map $\exp^g:TG\to G$ is defined on all of $TG$; 
\item\label{geodesiccompleteIII} the space $(G,g)$ is geodesically  complete, i.e., every geodesic is maximally definable on all of $\mathbb R$.
\end{enumerate}
Assume in addition that $G$ is $L^2$-regular and that for each $x\in G$ the sets
\begin{align}
\mathcal A_{x}:= \left\{\xi\in L^{2}([0,1],T_eG): \operatorname{evol}(\xi)=x\right\} \subset  L^{2}([0,1],T_eG)
\end{align}
are weakly closed. Then
\begin{enumerate}[(a),resume]
\item\label{geodesicconvex} the space $(G,g)$ is geodesically convex, i.e., any two points in $G$ can be connected by a geodesic of minimal length.
\end{enumerate}
In addition, the geodesic completeness statements, items\ref{geodesiccompleteI}--~\ref{geodesiccompleteIII},  hold  for the weak Riemannian manifolds $(G^k,g)$, where $g$ is the restriction of the Riemannian metric on $G$.
\end{theorem}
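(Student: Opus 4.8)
The plan is to prove the four unconditional completeness statements \ref{metriccomplete}--\ref{geodesiccompleteIII} first, then the conditional convexity statement \ref{geodesicconvex}, and finally transfer the geodesic completeness to $G^k$; throughout, the decisive structural fact is that right translations are isometries of both $g$ and the geodesic distance $d$, so that all local metric data are uniform over $G$.

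\textbf{Metric completeness \ref{metriccomplete}.} First I would establish a local comparison between $d$ and a chart norm near the identity. Since $g$ is strong, \cref{thm:riemannian:strong} makes $G$ a Hilbert manifold modeled on $(T_eG,g_e)$, and in a chart $u$ around $e$ with $u(e)=0$ and $du(e)=\on{Id}$ the coordinate field of inner products is continuous and equals $g_e$ at $0$; hence on a small ball it is two-sided comparable to the flat inner product, yielding constants $0<c<C$ and a radius $r>0$ with $c\norm{u(p)-u(q)}\le d(p,q)\le C\norm{u(p)-u(q)}$ for $p,q\in B_d(e,r)$, the lower bound following because a competitor path either stays in the chart or leaves a fixed ball and is therefore long. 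By right-invariance the translated charts $p\mapsto u(py^{-1})$ furnish the \emph{same} $r,c,C$ around every $y\in G$. Given a $d$-Cauchy sequence $(x_n)$, I would fix $N$ with $d(x_n,x_N)<r$ for $n\ge N$; then $y_n:=x_nx_N^{-1}\in B_d(e,r)$ is $d$-Cauchy, so $u(y_n)$ is Cauchy in the Hilbert space $T_eG$ and converges to some $\xi$. With $y:=u^{-1}(\xi)$ and using that $\mu^{x_N}$ is a $d$-isometry, $x_n=y_nx_N\to yx_N$, which proves completeness.

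\textbf{Geodesic completeness \ref{geodesiccompleteI}--\ref{geodesiccompleteIII}.} Because $g$ is strong, the Levi-Civita connection and its geodesic spray $S$ exist and are smooth. I would run the classical escape argument: a maximal unit-speed geodesic $\ga:[0,b)\to G$ with $b<\infty$ satisfies $d(\ga(s),\ga(t))\le|t-s|$, so $\ga(t_n)$ is $d$-Cauchy for $t_n\uparrow b$ and converges by \ref{metriccomplete} to some $p$. Extending past $b$ requires a uniform lower bound on the existence time of geodesics of bounded speed emanating from points near $p$; the non-compactness of the unit sphere of $T_eG$ is circumvented by the homogeneity $\ga_{cw}(t)=\ga_w(ct)$ of the spray, which turns a single neighborhood of $0_p$ on which $\on{Fl}^S$ exists for a fixed time $\de$ (containing a ball-bundle $\{w:\norm{w}<\et\}$) into existence for the fixed time $c\de$ for every velocity of norm $\le1$ based near $p$. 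Applying this at $\ga(t_n)$ with velocity $\ga'(t_n)$ extends $\ga$ beyond $b$, contradicting maximality; as the argument applies to every maximal geodesic, statements \ref{geodesiccompleteI}--\ref{geodesiccompleteIII} follow.

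\textbf{Geodesic convexity \ref{geodesicconvex}.} By right-invariance it suffices to join $e$ to an arbitrary $x$, and here I would switch to the direct method in $L^2$. The $L^2$-regularity renders $\on{evol}:L^2([0,1],T_eG)\to G$ and hence $\mathcal A_x$ meaningful, and right-invariance gives that the energy of $\on{Evol}(\xi)$ equals $\norm{\xi}_{L^2}^2$, with $\inf_{\xi\in\mathcal A_x}\norm{\xi}_{L^2}^2=d(e,x)^2$ after constant-speed reparametrization. Taking a minimizing sequence $\xi_n\in\mathcal A_x$, boundedness in the Hilbert space $L^2([0,1],T_eG)$ yields a weakly convergent subsequence $\xi_n\rightharpoonup\xi_*$; the assumed weak closedness of $\mathcal A_x$ forces $\xi_*\in\mathcal A_x$, and weak lower semicontinuity of the norm gives $\norm{\xi_*}_{L^2}^2\le\liminf\norm{\xi_n}_{L^2}^2=d(e,x)^2$. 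Thus $c_*:=\on{Evol}(\xi_*)$ joins $e$ to $x$ with $L(c_*)\le\norm{\xi_*}_{L^2}=d(e,x)$; combined with $L(c_*)\ge d(e,x)$ this forces equality in Cauchy--Schwarz, hence constant speed and length-minimality, and the standard fact that a length-minimizer in a strong Riemannian Hilbert manifold is a smooth geodesic completes the argument. Finally, for $(G^k,g)$: the spray $S$ is right-invariant and a second-order differential equation, so \cref{thm:nolossnogain0} and \cref{thm:nolossnogain_exp} apply, and since $\exp^g$ is defined on all of $TG$ by \ref{geodesiccompleteII}, its restriction is defined on all of $TG^k$ with values in $TG^k$ and geodesics started in $TG^k$ stay in $G^k$ for all time, transferring \ref{geodesiccompleteI}--\ref{geodesiccompleteIII} to $G^k$. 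The main obstacle is the convexity step: in infinite dimensions bounded sets are not compact, so the very existence of a minimizer hinges on replacing compactness by weak compactness together with the weak closedness of $\mathcal A_x$, and on upgrading the resulting $W^{1,2}$ minimizer to a genuine smooth geodesic.
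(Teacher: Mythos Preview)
Your proposal is correct and follows essentially the same route as the paper. The only cosmetic differences are that for \ref{metriccomplete} the paper uses the Riemannian exponential map and the Gauss lemma as the local chart (so $d$-balls coincide exactly with norm-balls rather than being merely bi-Lipschitz equivalent), and for \ref{geodesiccompleteI}--\ref{geodesiccompleteIII} the paper simply cites the standard implication ``metric complete $\Rightarrow$ geodesic complete'' for strong Riemannian Hilbert manifolds and also notes a shorter direct argument using that right translations of geodesics are geodesics; your detailed escape argument is precisely the content of the cited implication.
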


\begin{remark}
The closure condition for the sets $\mathcal A_x$ follows if $\operatorname{evol}: L^2([0,1],T_eG)\to G$ is continuous with respect to the weak topology on $L^2([0,1],T_eG)$ and some Hausdorff topology on $G$.
\end{remark}

\begin{proof}
By \cref{thm:riemannian:strong} we know that $G$ is a Hilbert manifold, and we will use this repeatedly throughout the proof.
To show~\ref{metriccomplete}, let $d$ be the geodesic distance in $G$, defined as the infinimum of the length of all piecewise smooth curves connecting two points. We know that $d$ is a metric whose induced topology is the manifold topology.
Let $\exp^g: TG\to G$ be the Riemannian exponential mapping.
Then
\[
\mathfrak g:= T_eG\supset B_\ep \xrightarrow[]{\exp^g_e} U_\ep\subset G
\]
is a diffeomorphism from a $g_e$-ball $B_\ep= \{X\in \mathfrak g: \|X\|_{g_e}<\ep\}$  in $\mathfrak g$ to an open neighborhood $U_\ep=\exp^g_e(B_\ep)$ of $e$ in $G$, for some $\ep>0$.
By the Gauss lemma \cite[Lemma 23.2, Corollary 23.3]{Michor08} we have
\begin{align}
U_\et:= \exp^g_e(B_\et)&=\{x\in G: d(e,x)<\et\} \text{ and }
\\
\bar U_\et:= \exp^g_e(\overline {B_\et})&=\{x\in G: d(e,x)\le\et\} \text{ for all  }0<\et<\ep.
\end{align}
We consider now the strong Riemannian metric
$\tilde g := (\exp^g_e)^*(g|_U)$ on $B_\ep\subset \mathfrak g$
and the smooth mapping
\[
B_\ep \ni X \mapsto  ((g_e^\vee)^{-1}\circ \tilde g_X^\vee : \mathfrak g = T_XB_\ep \to T_X^*B_\ep \to T_XB_\ep =\mathfrak g) \in L(\mathfrak g,\mathfrak g).
\]
For a suitably small ball $B_\eta$ we have
\[
\| (g_e^\vee)^{-1}\circ \tilde g_X\|_{L(\mathfrak g,\mathfrak g)} \le 2 \text{ and  }
\|( \tilde g_X^\vee )^{-1}\circ g_e^\vee\|_{L(\mathfrak g,\mathfrak g)} < 2
\]
for all $X\in B_\eta$. Thus the strong Riemann metrics $\tilde g$ and $g$ are uniformly bounded with respect to each other on $B_\eta$.
Next we consider a $d$-Cauchy-sequence $(x_i)$ in $G$.  For suitable $N$ we have $d(x_N,x_i)<\et/2$ for all $i\ge N$.
Then $(y_i:=x_i.x_N^{-1})_{i\ge N}$ is a Cauchy sequence in $U_{\et/2}$ since right translations are isometric, and
$(Y_i := (\exp^g_e)^{-1}(y_i)$ is a Cauchy sequence in $(B_{\et/2},\tilde g)$ and thus also in $(B_{\et/2}, g_e)$. Therefore, the sequence $(Y_i)$ has a limit $Y$ in $\bar B_\et/2$ in the $g_e$-norm, and
$y:=\exp^g_e(Y)$ is the limit of the sequence $(y_i)$, which concludes the proof of~\ref{metriccomplete}.

\ref{geodesiccompleteI}--\ref{geodesiccompleteIII} follow directly from metric completeness, as this assertation of the theorem of Hopf-Rinow is  still valid in infinite dimensions, see e.g.~\cite{lang1999fundamentals}. Alternatively, it can also be seen directly, as argued in \cite{gay2015geometry}:
As $g$ is strong, the geodesic exponential map is defined on some neighborhood of $0 \in T_eG$.
Moreover, as $g$ is right invariant, the right translation of a geodesic is a geodesic.
This allows one to extend a geodesic defined on an interval $(-s,s)$ to a geodesic defined on a larger interval $(-t,t)$ for some $t>s$.
It follows that the geodesic exponential map is defined on all of $T_eG$.

It remains to show that the space $(G,g)$ is geodesically convex. We will first show that there exist minimizing paths connecting any two elements $x_0$ and $x_1$.
The geodesic distance between $x_0$ and $x_1$ can be calculated by minimizing the Riemannian energy
\begin{equation}
E(x)=\int_0^1 g_x(x_t,x_t) dt,
\end{equation}
over all paths $x\in H^1([0,1],G)$, such that $x(0)=x_0$ and $x(1)=x_1$.
Using the right invariance of the Riemannian metric we can rewrite this as
\begin{equation}
E(x)=\int_0^1 \langle T\mu^{x^{-1}}.x_t,T\mu^{x^{-1}}.x_t\rangle dt
:=
\|T\mu^{x^{-1}}.x_t\|^2_{L^{2}([0,1],\mathfrak g)}.
\end{equation}
Thus, by the $L^2$-regularity of the half-Lie group $G$, i.e., the bijection between $H^1$-paths in the Lie group and $L^2$-paths in $\mathfrak g$,  we can reformulate the  calculation of the geodesic distance as the minimization problem
\begin{equation}
\inf\|\xi\|^2_{L^{2}([0,1],\mathfrak g)}
\end{equation}
where the infimum is taken over the set
\begin{multline}
\notag
 \mathcal B=\mathcal A_{x_1.x_0^{-1}}\cap\left\{
\xi\in L^{2}([0,1],\mathfrak g):
\|\xi\|^2_{L^{2}([0,1],\mathfrak g)}\leq 2 \operatorname{dist}(x_0,x_1)^2 \right\}= \\\left\{\xi\in L^{2}([0,1],\mathfrak g): \operatorname{evol}(\xi)=x_1.x_0^{-1}\right\}
\cap\left\{
\xi\in L^{2}([0,1],\mathfrak g):
\|\xi\|^2_{L^{2}([0,1],\mathfrak g)}\leq 2 \operatorname{dist}(x_0,x_1)^2 \right\}.
\end{multline}
By definition $\mathcal B$ is bounded.
Using the assumption that  $\mathcal A_{x_1.x_0^{-1}}$ is weakly closed it follows that $\mathcal B$ is also weakly closed and thus compact.

Next, we choose a sequence $\xi_n\in \mathcal B$ such that
\begin{equation}
\left|\inf_{\xi\in \mathcal B}\|\xi\|^2_{L^{2}([0,1],\mathfrak g)}-\|\xi_n\|^2_{L^{2}([0,1],\mathfrak g)}\right|<\frac{1}{n}
\end{equation}
As $B$ is compact, there exists  $\tilde \xi\in\mathcal B$ which is a cluster point for
$\xi_n$. By choosing a subsequence we may assume that $\xi_n\to \xi$ weakly.
Since every norm on a Banach space is sequentially weakly lower semi-continuous this implies that
\begin{equation}
\|\tilde \xi\|^2_{L^{2}([0,1],\mathfrak g)}=\inf_{\xi\in B}\|\xi\|^2_{L^{2}([0,1],\mathfrak g)}.
\end{equation}
Thus $x=\operatorname{Evol}(\xi)$ is an energy minimizing path. It remains to show that $x$ is a solution of the geodesic equation, but this follows by standard arguments: since $x$ is a minimizing path for $t\in [0,1]$, it is also minimizing on each subinterval and thus the statement follows by the Gauss lemma.

Finally we note, that the geodesic completeness for $G^k$ follows directly from the no-loss-no-gain result; Corollary~\ref{thm:nolossnogain_exp}.
\end{proof}

\begin{example}[Sobolev metrics on groups of diffeomorphisms]
For a compact, finite dimensional, Riemannian manifold $(M,g)$ we consider the group of Sobolev diffeomorphisms $\Diff_{H^s}(M)$ as introduced in Example~\ref{ex:diffeo}. We equip this infinite dimensional half-Lie group with the strong, right invariant Sobolev metric of order $s$, i.e.,
\begin{equation}
G^s_{\varphi}(h\circ\varphi,k\circ \varphi)=\int_M g((1-\Delta)^{s/2}h,(1-\Delta)^{s/2}k) \on{vol},
\end{equation}
where $h$ and $k$ are $H^s$ vector fields and where $\Delta$ ($\on{vol}$, resp.) are the Laplacian (volume form, resp.) of the (finite dimensional) Riemannian metric $g$. 
To see that $G$ is indeed a Riemannian metric, one only has to check that it depends smoothly on the foot point: for integer orders this is relatively easy as one can derive an explicit formula for the dependence on $\varphi$, see eg.~\cite{ebin1970groups}. For real $s$, on the other hand, this involves highly, non-trivial estimates and has been shown only recently~\cite{bauer2015local,bauer2020wellposedness}. Thus one immediately obtains the geodesic and metric completeness of $(\Diff_{H^s}(M),G^s)$. To prove the existence of minimizers one has to show the additional assumption in Theorem~\ref{thm:hopf-rinow}, which has been shown implicitly in~\cite{bruveris2017completeness}. Thereby we have recovered all completeness statements on the group of Sobolev diffeomorphisms as obtained in~\cite{bruveris2017completeness}. 
\end{example}

\subsection{Curvature}\label{curvature}
On a Lie group equipped with a right invariant Riemannian metric there are well developed formulas for curvature which for diffeomorphism groups are due to \cite{arnold1966}; but these make use of the Lie bracket on the Lie algebra, which we do not have in the case of half-Lie groups. Here we want to sketch another formula, due to  \cite{micheli2013sobolev}, which involves the inverse metric and locally constant 1-forms and make use of the \emph{symmetriced force} (from Newton's law $F=m.c''$ which lives in $T^2G$ and its dual) and \emph{stress} (which is half the Lie bracket). This formula is well adapted to the O'Neill formula for Riemannian submersions, and has been used to compute the curvature on landmark space in \cite{micheli2012sectional} and  \cite[Section 9.6]{micheli2013sobolev}. 

So let $G$ be a half-Lie group with a right invariant smooth Riemannian metric $g$. We consider smooth 1-forms $\al,\be:G\to g(TG)\subset T^*G$ in the case of a weak metric $g$. Note that for a strong metric we have $g(TG)=T^*G$. 
Then we  introduce auxiliary vector fields $X_\al$ and 
$X_\be$ playing the role of `locally constant' extensions of the value of $\al^\sharp= g\i\o \al$ and 
$\be^\sharp$ for the 1-forms $\al,\be$ at the point $x \in G$ where the curvature is being calculated and for which 
the 1-forms $\al, \be$ appear locally constant too. More precisely, assume we are given $X_\al$ and $X_\be$ such that:
\begin{enumerate}[(a)]
\item $X_\al(x) = \al^\sharp(x),\quad X_\be(x) = \be^\sharp(x)$,
\item Then $\al^\sharp-X_\al$ is zero at $x$ hence has a well defined derivative 
       $D_x(\al^\sharp-X_\al)$ lying in Hom$(T_xG,T_xG)$. For a vector field $Y$ we have 
       $D_x(\al^\sharp-X_\al).Y_x = [Y,\al^\sharp-X_\al](x) = \L_Y(\al^\sharp-X_\al)|_x$.
       The same holds for $\be$.
\item $\L_{X_\al}(\al)=\L_{X_\al}(\be)=\L_{X_\be}(\al)=\L_{X_\be}(\be)=0$,
\item $[X_\al, X_\be] = 0$.
\end{enumerate}
Locally constant 1-forms and vector fields with respect to a chart satisfy these properties. 
Using these forms and vector fields, we then define:
\begin{align}
\mathcal F(\al,\be) :&= \tfrac12 d(g\i(\al,\be)), \qquad \text{a 1-form on $M$ called the {symmetriced force},}\\
\mathcal D(\al,\be)(x) :&= D_x(\be^\sharp - X_\be).\al^\sharp(x)
\\&
= d(\be^\sharp - X_\be).\al^\sharp(x), \quad\text{a tangent vector at $x\in G$ called the {\it stress}.}
\end{align}  
Then in the notation above, by \cite[Section 2.3]{micheli2013sobolev}, we have the following formula for the the sectional curvature, where $\langle \al,X\rangle$ denote the evaluation of a 1-form on a tangent vector. 
\begin{align}
&g\big(R(\al^{\sharp},\be^{\sharp})\be^{\sharp},\al^{\sharp}\big)(x) = R_{11} +R_{12} + R_2 + R_3 \\ 
&\quad R_{11} = \tfrac12 \left(
\L_{X_\al}^2(g\i)(\be,\be)-2\L_{X_\al}\L_{X_\be}(g\i)(\al,\be)
+\L_{X_\be}^2(g\i)(\al,\al)
 \right)(x) \\
& \quad R_{12} = \langle \mathcal F(\al,\al), \mathcal D(\be,\be) \rangle + \langle \mathcal F(\be,\be),\mathcal D(\al,\al)\rangle  
- \langle \mathcal F(\al,\be), \mathcal D(\al,\be)+\mathcal D(\be,\al) \rangle \\
&\quad R_2 = \left(
\|\mathcal F(\al,\be)\|^2_{g\i}
-\big\langle \mathcal F(\al,\al)),\mathcal F(\be,\be)\big\rangle_{g\i} \right)(x) \\
&\quad R_3 = -\tfrac34 \| \mathcal D(\al,\be)-\mathcal D(\be,\al) \|^2_{g_x} \label{eq:curv3}
\end{align}

\section{Auxiliary results on local additions}
\label{sec:local_additions}

Local additions are defined in \cref{def:addition}. Here, we collect some further properties and explain their relation to sprays, linear connections, and geodesic structures. 

\begin{remark}[Local right-invariance of local additions]
The restriction of a right-invariant local addition $\ta$ to an open $e$-neighborhood $U\subseteq G$ is a smooth mapping $\ta^U:=\ta|_{TU\cap V}:TU\cap V\to G$ with the following properties: 
\begin{enumerate}[(a)]
\item $\ta^U(0_x)=x$ for all $x\in U$; 
\item $\ta^U(T_x\mu^y(X_x))= \mu^y(\ta^U(X_x))$ for $x,y\in G$ with $x\in U\cap U.y\i$ and $X_x\in T_xG\cap V$;
\item $(\pi_U,\ta^U):TU\cap V\to G\x G$ is a diffeomorphism onto its image.
\end{enumerate}
We call $\ta^U$ a \emph{local right invariant local addition near $e$.}
Obviously, we can reconstruct $\ta$ from $\ta^U$. 
\end{remark}

\begin{remark}[Normalization of local additions]
Every local addition $\ta:TG\supseteq V \to G$ can be normalized such that $\p_t|_0\tau(tX_x)=X_x$, for all $X_x \in TG$. To this aim, one has to replace $\tau$ by $\tau\circ A\i$, where $A$ is the fiber-wise linear map given by
\begin{align}
&A_x:T_xG\to T_xG, && A_x(X_x) = \p_t|_0 \ta(tX_x) =  (T_{0_x}\ta \o \on{vl})(0_x,X_x).
\end{align}
Here, $\on{vl}:TG\x_G TG\to V(TG)\subset T^2G$ is the vertical lift, which is given by $\on{vl}(X_x,Y_x) = \p_t|_0 X_x +t Y_x$; see \cite[Section 29.9]{KrieglMichor97} or \cite[Section 8.12]{Michor08}. The map $A\in L(TG,TG)$ is fiber-wise invertible with inverse 
$A_x\i = \on{vpr}\o T_{0_x}(\ta|_{T_xG\cap V})\i$ because $(\pi_G,\ta):V\to G\x G$ is a diffeomorphism onto its image. 
\end{remark}

\begin{remark}[Sprays, linear connections, and geodesic structures]
Local additions are closely related to sprays, linear connections, and geodesic structures; see \cite[Sections 22.6--22.8]{Michor08} for definitions and notations. 
Let $\ta:TG\supseteq V\to G$ be a local addition. 
Without loss of generality, $\ta$ is already normalized.   
Then, the vector field
\begin{align}
S:TG&\to T^2G, \qquad S(X_x): = \p_t^2|_0 \ta(tX_x),
\end{align}
satisfies the defining properties of a \emph{spray}:
\begin{align}
\pi_{TG} S(X_x) &= \pi_{TG}\p_t^2|_0\ta(tX_x) = \p_t|_0 \ta(tX_x) = X_x,
\\
T(\pi_{G}) S(X_x) &= T(\pi_{G})\p_t^2|_0\ta(tX_x) = \p_t|_0 (\pi_G \p_t \ta(t X_x)) = \p_t|_0 (\ta(t X_x)) = X_x,
\\
S(sX_x) &= \p_t^2|_0 \ta(tsX_x) = \p_t|_0 \big(m^{TG}_s\p_r|_0 \ta(trsX_x)\big) = m^{T^2G}_s T(m^{TG}_s) S(X_x),
\end{align}
where $m^{TG}_s$ is scalar multiplication by $s\in \mathbb R$ on the bundle $TG\to G$.
The spray $S$ can equivalently be expressed as a symmetric \emph{linear connection}.  
If $G$ is a Banach manifold, then the flow of $S$ exists locally and gives rise to a \emph{geodesic structure} 
$\on{geo}(X)(t) = (\pi_G\o \on{Fl}^S_t)(X)$. 
A geodesic structure is a local addition with some extra properties. 
If the local addition $\ta$ is right-invariant, then the spray, linear connection, and geodesic structure are also right-invariant.
\end{remark}

\section{Auxiliary results on jets}
\label{sec:jets}

This section collects several technical results on jets and differentiability of functions for subsequent use.
See \cite[Section~41]{KrieglMichor97} for unexplained standard notation.
Throughout \cref{sec:jets}, $x,y,z$ are points in open subsets $U,V,W$ of Banach spaces $E,F,G$, respectively, $f:U\to V$ and $g:V\to W$ are $k$ times differentiable functions, and $k \in \mathbb N$.

\begin{definition}[Jets]
\label{def:jets}
The space of $k$-jets from $U$ to $V$ is defined by
\[
J^k(U,V) := U \x V \x \on{Poly}^k(E,F),
\quad
\text{where}
\quad
\on{Poly}^k(E,F)
=
\prod_{j=1}^k L^j_{\on{sym}}(E;F)
\]
carries the sum of the norms for multilinear mappings.
It is an open subset of the Banach space $J^k(E,F)$.
We write $\al$ and $\be$ for the source and target projections, i.e.,
\begin{equation}
\al=\on{pr}_1:J^k(U,V)\to U,
\qquad
\be=\on{pr}_2:J^k(U,V)\to V.
\end{equation}
The $k$-jet of a function $f:U\to V$ at $x \in U$ is defined as
\[
j^k_xf
=
\Big(x,f(x),df(x),\frac{1}{2!}d^2f(x),\dots,\frac{1}{k!}d^kf(x)\Big)
\in
J^k(U,V).
\]
\end{definition}

\begin{definition}[Jet composition]
\label{def:jet_composition}
On the fibered product
\begin{equation}
J^k(V,W) \x_V J^k(U,V)
=
\big\{(\tau,\sigma) \in J^k(V,W) \x J^k(U,V):\al(\ta)=\be(\si)\big\},
\end{equation}
jet composition $\bullet: J^k(V,W) \x_V J^k(U,V) \to J^k(U,W)$ is defined as
\[
(y,z,q)\bullet(x,y,p)
=
\big(x,z,\pi_k(q\o p)\big),
\]
where $\pi_k$ discards all monomials of order zero or greater than $k$, and where $\o$ denotes composition of polynomials.
\end{definition}

\begin{lemma}[Jet composition]
\label{lem:jet_composition}
Jet composition satisfies the defining property
\[
j^k_{f(x)}g\bullet j^k_x f=j^k_x(f\o g)
\]
and satisfies the bounds
\begin{align}
\|\ta\bullet\si\|
&\leq
(1+\|\ta\|)(1+\|\si\|^k),
\\
\|\tilde\ta\bullet\tilde\si-\ta\bullet\si\|
&\leq
\|\tilde \ta-\ta\|(1+\|\tilde\si\|^k)
+(1+\|\ta\|) \|\tilde \si-\si\|(1+k\|\tilde\si\|^{k-1}+k\|\si\|^{k-1}).
\end{align}
\end{lemma}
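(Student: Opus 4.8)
The plan is to treat the three assertions separately: the compositional identity, the product bound, and the difference bound.

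For the identity $j^k_{f(x)}g\bullet j^k_x f=j^k_x(g\o f)$ (with $g\o f$ the composite $x\mapsto g(f(x))$) I would avoid Fa\`a di Bruno's explicit formula and argue by Taylor's theorem together with uniqueness of Taylor polynomials. Write $p$ and $q$ for the polynomial parts of $j^k_x f$ and $j^k_{f(x)}g$, regarded as polynomial maps $E\to F$ and $F\to G$ vanishing at the origin; by Taylor's theorem $f(x+h)=f(x)+p(h)+o(\|h\|^k)$ and $g(f(x)+w)=g(f(x))+q(w)+o(\|w\|^k)$ as $h,w\to 0$. Substituting $w=p(h)+o(\|h\|^k)=O(\|h\|)$ and using that $q$ is a polynomial of degree $\le k$ — so that perturbing its argument (of size $O(\|h\|)$) by an $o(\|h\|^k)$ term changes the value by $o(\|h\|^k)$, since the derivatives of $q$ at such points are $O(\|h\|^{k-1})$ — gives $g(f(x+h))=g(f(x))+q(p(h))+o(\|h\|^k)$. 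Finally $q\o p$ is a polynomial whose monomials of degree exceeding $k$ collectively contribute $O(\|h\|^{k+1})=o(\|h\|^k)$, so $g\o f$ agrees to order $k$ at $x$ with $h\mapsto g(f(x))+\pi_k(q\o p)(h)$; by uniqueness of the $k$-th Taylor polynomial this shows $j^k_x(g\o f)=(x,g(f(x)),\pi_k(q\o p))=j^k_{f(x)}g\bullet j^k_x f$.

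For the first bound I would expand $\pi_k(q\o p)$ into homogeneous multilinear pieces. With $p=(p_1,\dots,p_k)$ and $q=(q_1,\dots,q_k)$, the degree-$m$ part of $\pi_k(q\o p)$ (for $1\le m\le k$) is the symmetrisation of $\sum_j\sum_{i_1+\dots+i_j=m,\ i_l\ge1}q_j\o(p_{i_1}\otimes\dots\otimes p_{i_j})$, and each summand obeys the natural submultiplicative estimate $\|q_j\o(p_{i_1}\otimes\dots\otimes p_{i_j})\|\le\|q_j\|\prod_l\|p_{i_l}\|$ (the factorial normalisation in \cref{def:jets} absorbing the polarisation constants). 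Summing over $m$ and reorganising by $j$, then dropping the constraint $\sum i_l\le k$, gives $\|\pi_k(q\o p)\|\le\sum_{j=1}^k\|q_j\|\sum_{i_1,\dots,i_j\ge1}\prod_l\|p_{i_l}\|=\sum_{j=1}^k\|q_j\|\,\|p\|^j$, where $\|p\|=\sum_{i=1}^k\|p_i\|$. Since $\|p\|^j\le 1+\|p\|^k$ for $1\le j\le k$, this is $\le(1+\|p\|^k)\sum_j\|q_j\|\le(1+\|q\|)(1+\|p\|^k)$; as the base-point components compose trivially ($\al(\ta\bullet\si)=\al(\si)$ and $\be(\ta\bullet\si)=\be(\ta)$), this yields $\|\ta\bullet\si\|\le(1+\|\ta\|)(1+\|\si\|^k)$.

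The difference bound is where the real work lies, and I regard the bookkeeping of the truncation $\pi_k$ as the main obstacle. I would split $\tilde\ta\bullet\tilde\si-\ta\bullet\si=\pi_k\big((\tilde q-q)\o\tilde p\big)+\pi_k\big(q\o\tilde p-q\o p\big)$ using linearity of polynomial composition in the outer factor; the first term is bounded by the estimate above as $\|\tilde q-q\|(1+\|\tilde p\|^k)$, giving the first claimed summand. For the second term I would write $q\o\tilde p-q\o p=\sum_{j=1}^k\big(q_j(\tilde p,\dots,\tilde p)-q_j(p,\dots,p)\big)$ and telescope the $j$-th term across its slots as $\sum_{l=1}^jq_j(p,\dots,p,\tilde p-p,\tilde p,\dots,\tilde p)$, with $l-1$ arguments $p$ and $j-l$ arguments $\tilde p$; passing to the homogeneous pieces $p_i$, $\tilde p_i$, $\tilde p_i-p_i$, truncating at degree $k$, taking norms and summing as before leads to a bound of the form $\sum_{j=1}^k\|q_j\|\,\|\tilde p-p\|\sum_{l=1}^j\|p\|^{l-1}\|\tilde p\|^{j-l}$. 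Estimating $\sum_{l=1}^j\|p\|^{l-1}\|\tilde p\|^{j-l}\le j\big(\|p\|^{j-1}+\|\tilde p\|^{j-1}\big)$ and then using $\|p\|^{j-1},\|\tilde p\|^{j-1}\le 1+\|p\|^{k-1}+\|\tilde p\|^{k-1}$ for $1\le j\le k$, together with $\|\tilde p-p\|\le\|\tilde\si-\si\|$ and $\sum_j\|q_j\|\le 1+\|\ta\|$, produces the second claimed summand (up to harmless absolute constants and the base-point differences, which are Lipschitz with constant $1$). The one subtlety to watch throughout is that every monomial surviving $\pi_k$ in the telescoped expansion carries exactly one factor of type $\tilde p_i-p_i$ and at most $k-1$ further factors — this is what keeps the exponents at $k-1$ — so I would organise the double sum by $j$ and by the distinguished slot $l$ to make the accounting transparent.
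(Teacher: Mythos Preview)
Your proof is correct and, for the two bounds, follows essentially the same route as the paper: expand $q\circ p$ into its multilinear constituents and use submultiplicativity of the multilinear norms for the product bound; split $\tilde q\circ\tilde p-q\circ p=(\tilde q-q)\circ\tilde p+(q\circ\tilde p-q\circ p)$ and telescope the second term slot by slot for the difference bound. The paper organises the sums identically and arrives at the same intermediate inequality $\sum_j\|q_j\|\,\|\tilde p-p\|\sum_{\ell=1}^j\|\tilde p\|^{\ell-1}\|p\|^{j-\ell}$.

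Two small differences are worth noting. For the defining property you argue via Taylor's theorem and uniqueness of the $k$-th Taylor polynomial, whereas the paper simply invokes Fa\`a di Bruno's formula; your route is self-contained and avoids the explicit combinatorics, which is a mild gain. On the other hand, the paper obtains the constants exactly as stated: it passes from $\sum_j\|q_j\|a_j$ to $\|q\|\max_j a_j$ (rather than to $\|q\|$ times a further sum) and then bounds $\max_{1\le j\le k}\sum_{\ell=1}^j\|\tilde p\|^{\ell-1}\|p\|^{j-\ell}$ directly by $1+k\|\tilde p\|^{k-1}+k\|p\|^{k-1}$, absorbing the base-point contributions $\|\tilde x-x\|$ and $\|\tilde z-z\|$ into the two displayed summands. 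Your estimate $\sum_\ell\|p\|^{\ell-1}\|\tilde p\|^{j-\ell}\le j(\|p\|^{j-1}+\|\tilde p\|^{j-1})$ followed by $\|p\|^{j-1}\le 1+\|p\|^{k-1}$ yields the same polynomial shape with an extra factor of roughly $2k$, which (as you say) is harmless for every use of the lemma in the paper but does not reproduce the stated inequality verbatim.
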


\begin{proof}
The defining property is the multi-dimensional version (see \cite[Section 2.4]{MM13}, e.g.) of Fa\`a di Bruno's formula \cite{bruno1857note}, which states that the Taylor series of a composition of two functions is the composition of their respective Taylor series. We will also use that multi-composition of mulilinear mappings is a bounded operation for Banach spaces.
In the subsequent computations, $\sigma=(x,y,p)\in J^k(U,V)$, $\tau=(y,z,q)\in J^k(V,W)$, and all indices run from $1$ to $k$.
\begin{align}
\|q\o p\|
&=
\Big\|\sum_jq_j\Big(\sum_{i_1}p_{i_1},\dots,\sum_{i_j}p_{i_j}\Big)\Big\|
\leq
\sum_j\|q_j\|\Big(\sum_{i_1}\|p_{i_1}\|\Big)\cdots\Big(\sum_{i_j}\|p_{i_j}\|\Big)
\\&=
\sum_j\|q_j\|\|p\|^j
\leq
\|q\|\max_j\|p\|^j
\leq
\|q\|(1+\|p\|^k),
\end{align}
\begin{align}
\|\ta\bullet\si\|
&=
\|(x,z,\pi_k(q\o p))\|
\leq
\|x\|+\|z\|+\|q\|(1+\|p\|^k)
\leq
(1+\|\ta\|)(1+\|\si\|^k),
\end{align}
\begin{align}
\|q\o\tilde p-q\o p\|
&=
\Big\|\sum_j\sum_{\ell=1}^j q_j\Big(\sum_{i_1}p_{i_1},\dots,\sum_{i_\ell}\tilde p_{i_\ell}-p_{i_\ell},\dots\sum_{i_j}p_{i_j}\Big)\Big\|
\\&\leq
\sum_j\sum_{\ell=1}^j \|q_j\|\|\tilde p\|^{\ell-1}\|\tilde p-p\|\|p\|^{j-\ell}
\\&\leq
\|q\| \max_j \sum_{\ell=1}^j \|\tilde p\|^{\ell-1}\|\tilde p-p\|\|p\|^{j-\ell}
\\&\leq
\|q\| \|\tilde p-p\|(1+k\|\tilde p\|^{k-1}+k\|p\|^{k-1}),
\end{align}
\begin{align}
\|\tilde q\o \tilde p-q\o p\|
&\leq
\|(\tilde q-q)\o \tilde p\|+\|q\o\tilde p-q\o p\|
\\&\leq
\|\tilde q-q\|(1+\|\tilde p\|^k)
+\|q\| \|\tilde p-p\|(1+k\|\tilde p\|^{k-1}+k\|p\|^{k-1}),
\end{align}
\begin{align}
\|\tilde\ta\bullet\tilde\si-\ta\bullet\si\|
&=
\big\|\big(\tilde x-x,\tilde z-z,\pi_k(\tilde q\o\tilde p-q\o p)\big)\big\|
\\&\leq
\|\tilde z-z\|
+\|\tilde q-q\|(1+\|\tilde p\|^k)
\\&\qquad
+\|\tilde x-x\|
+\|q\| \|\tilde p-p\|(1+k\|\tilde p\|^{k-1}+k\|p\|^{k-1})
\\&\leq
\|\tilde \ta-\ta\|(1+\|\tilde\si\|^k)
+(1+\|\ta\|) \|\tilde \si-\si\|(1+k\|\tilde\si\|^{k-1}+k\|\si\|^{k-1}).
\qedhere
\end{align}
\end{proof}

\begin{definition}[Jet evaluation]
\label{def:jet_evaluation}
Let $\pi_H$ and $\pi_V$ denote the projections onto the first and second component of $TU\cong U\x E$.
On the fibered product
\begin{equation}
J^k(U,V) \x_U TU
=
\big\{(\sigma,\xi) \in J^k(U,V) \x TU:\al(\si)=\pi_H(\xi)\big\},
\end{equation}
jet evaluation $\odot: J^k(U,V) \x_U TU \to J^{k-1}(TU,TV)$ is defined as
\begin{multline}
\notag
(x,y,p)\odot(x,X)
=
\Big((x,X),\big(y,p_1(X)\big),\dots,
\\
\big(p_{k-1}(\pi_H,\dots,\pi_H),p_k(\pi_H,\dots,\pi_H,X)+(k-1) p_{k-1}(\pi_H,\dots,\pi_H,\pi_V)\big)\Big).
\end{multline}
\end{definition}

\begin{lemma}[Jet evaluation]
\label{lem:jet_evaluation}
Jet evaluation satisfies the defining property
\[
j^k_xf\odot\xi = j^{k-1}_\xi Tf,
\qquad
\xi \in T_xU
\]
and satisfies the bounds
\begin{align}
\|\si\odot\xi\|
&\leq
\|\xi\|+(k+1)\|\sigma\|+\|\xi\|\|\sigma\|,
\\
\|\tilde\si\odot\tilde \xi-\si\odot \xi\|
&\leq
\|\tilde\si-\si\|(k+\|\tilde \xi\|)+\|\tilde \xi-\xi\|(1+\|\sigma\|).
\end{align}
\end{lemma}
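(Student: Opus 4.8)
The plan is to establish the defining property first, and then read off the two norm bounds directly from the algebraic form of $\odot$ in \cref{def:jet_evaluation}. For the defining property I would work in the canonical charts $TU\cong U\x E$ and $TV\cong V\x F$, in which the tangent map reads $Tf(x,X)=(f(x),df(x)(X))$, and compute $j^{k-1}_{(x,X)}Tf$ by Taylor-expanding the increment map $(v,w)\mapsto Tf(x+v,X+w)=\big(f(x+v),\,df(x+v)(X+w)\big)$ to order $k-1$ (this is legitimate since $f$, hence $Tf$, is sufficiently differentiable). The first component contributes the usual Taylor coefficients $\tfrac{1}{j!}d^jf(x)$, whose $j$-th symmetric part is exactly $p_j(\pi_H,\dots,\pi_H)$ with $p_j=\tfrac{1}{j!}d^jf(x)$; this matches the first slot of every polynomial component in \cref{def:jet_evaluation}, as well as the source $(x,X)$ and the target $(f(x),df(x)(X))=(y,p_1(X))$ by inspection. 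The nontrivial content is the second component.

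For the second component I would expand $df(x+v)=\sum_{m\ge 0}\tfrac{1}{m!}d^{m+1}f(x)(v^{(m)},\cdot)$, where $v^{(m)}$ abbreviates $m$ copies of $v$, and pair it against $X+w$:
\begin{align}
df(x+v)(X+w)
&=\sum_{m\ge 0}\tfrac{1}{m!}d^{m+1}f(x)(v^{(m)},X)
+\sum_{m\ge 0}\tfrac{1}{m!}d^{m+1}f(x)(v^{(m)},w).\notag
\end{align}
Here the decisive structural fact is that $Tf$ is \emph{linear} in the fibre variable, so $w$ enters only linearly: the first sum is homogeneous of degree $m$ in $(v,w)$ and the second of degree $m+1$. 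Consequently the degree-$j$ part of the expansion has exactly two contributions: one coming from the top differential $d^{j+1}f(x)$ contracted against the frozen vector $X$ (the $p_{j+1}(\pi_H,\dots,\pi_H,X)$ term), and one coming from $d^{j}f(x)$ carrying a single fibre slot $\pi_V$, with the multiplicity $j$ produced by symmetrizing over the $j$ positions into which that slot can be inserted (the $j\,p_j(\pi_H,\dots,\pi_H,\pi_V)$ term). Specializing $j=k-1$ recovers the top component written out in \cref{def:jet_evaluation}, and the lower components arise identically. The main obstacle is precisely this bookkeeping — one must track that at most one derivative ever falls on $X$ and account correctly for the combinatorial multiplicity from the symmetry of the higher differentials.

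For the two inequalities I would estimate the three constituents of $\si\odot\xi$ — the source $(x,X)$, the target $(y,p_1(X))$, and the polynomial tuple $(q_1,\dots,q_{k-1})$ — separately, using only submultiplicativity of operator norms for symmetric multilinear maps together with $\|\pi_H\|,\|\pi_V\|\le 1$. Each $q_j$ is a sum of contractions of $p_j$ and $p_{j+1}$ with the projections and with the frozen vector $X$, so $\|q_j\|$ is controlled by $\|p_j\|$, $\|p_{j+1}\|$ and $\|X\|$; summing over $j\le k-1$ and collecting the $\xi$-independent terms against the $X$-linear terms produces a bound of the form $\|\xi\|+(k+1)\|\si\|+\|\xi\|\,\|\si\|$. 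For the Lipschitz-type estimate I would add and subtract $\si\odot\tilde\xi$ and use that each constituent of $\si\odot\xi$ is linear in the jet data $(y,p)$ and affine in the fibre part of $\xi$; applying the same submultiplicativity to the two differences $\tilde\si\odot\tilde\xi-\si\odot\tilde\xi$ and $\si\odot\tilde\xi-\si\odot\xi$ then yields $\|\tilde\si-\si\|(k+\|\tilde\xi\|)+\|\tilde\xi-\xi\|(1+\|\si\|)$. Both estimates are routine once the algebraic shape of $\odot$ is in hand, so the only genuine work lies in the verification of the defining property above.
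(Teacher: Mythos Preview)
Your proposal is correct and follows essentially the same approach as the paper: for the defining property you Taylor-expand $Tf(x+v,X+w)$ and exploit the linearity in the fibre variable to isolate the two contributions at each degree, which is exactly what the paper does by listing the iterated derivatives of $Tf$; for the bounds you estimate the components termwise via submultiplicativity, as the paper does by collecting all terms involving a given $p_\ell$. The only cosmetic difference is that for the Lipschitz estimate you propose the add-and-subtract splitting $\tilde\si\odot\tilde\xi-\si\odot\tilde\xi$ plus $\si\odot\tilde\xi-\si\odot\xi$, whereas the paper expands the full difference directly; both routes yield the same constants.
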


\begin{proof}
To verify the defining property, one starts from the expression
\begin{equation}
Tf(x,X)
=
\big(f(x),df(x)(X)\big)
\end{equation}
and computes iterated derivatives with respect to $(x,X)$. These are given by:
\begin{align}
&\big(df(x)(\pi_H),d^2f(x)(\pi_H,X)+df(x)(\pi_V)\big),
\\&
\big(d^2f(x)(\pi_H,\pi_H),d^3f(x)(\pi_H,\pi_H,X)+2d^2f(x)(\pi_H,\pi_V)\big),
\\&
\big(d^3f(x)(\pi_H,\pi_H,\pi_H),d^4f(x)(\pi_H,\pi_H,\pi_H,X)+3d^3f(x)(\pi_H,\pi_H,\pi_V)\big),
\\&\dots
\\&
\big(d^{k-1}f(x)(\pi_H,\dots,\pi_H),d^kf(x)(\pi_H,\dots,\pi_H,X)+(k-1)d^{k-1}f(x)(\pi_H,\dots,\pi_H,\pi_V)\big).
\end{align}
These terms coincide with the expression of $\sigma\odot\xi$ with $\sigma=(x,y,p)=j^k_xf$ and $\xi=(x,X)$.
To verify the first bound, one collects for any $\ell \in \{1,\dots,k-1\}$ the terms in the formula for $\|\sigma\odot\xi\|$ involving $p_\ell$. These terms are:
\begin{equation}
p_\ell(\pi_H,\dots,\pi_H,X),
\qquad
p_\ell(\pi_H,\dots,\pi_H),
\qquad
\ell p_\ell(\pi_H,\dots,\pi_H,\pi_V).
\end{equation}
The sum of the norms of these terms is bounded by $\|p_\ell\|(\|X\|+k)$. Thus, overall, one obtains
\begin{equation}
\|\sigma\odot\xi\|
\leq
\|\sigma\|+\|\xi\|+\|\sigma\|(\|\xi\|+k)
=
\|\xi\|+(k+1)\|\sigma\|+\|\xi\|\|\sigma\|.
\end{equation}
For the second bound, one estimates
\begin{align}
\hspace{2em}&\hspace{-2em}
\|\tilde\si\odot\tilde \xi-\si\odot \xi\|
\leq
\|\tilde x-x\|
+\|\tilde X-X\|
+\|\tilde y-y\|
+\|(\tilde p_1-p_1)(\tilde X)\|
+\|p_1(\tilde X-X)\|
\\&\qquad
+\dots
+\|(\tilde p_{k-1}-p_{k-1})(\pi_H,\dots,\pi_H)\|
+\|(\tilde p_k-p_k)(\pi_H,\dots,\pi_H,\tilde X)\|
\\&\qquad
+\|p_k(\pi_H,\dots,\pi_H,\tilde X-X)\|
+(k-1)\|(\tilde p_{k-1}-p_{k-1})(\pi_H,\dots,\pi_H,\pi_V)\|
\\&\leq
\|\tilde\si-\si\|(k+\|\tilde \xi\|)+\|\tilde \xi-\xi\|(1+\|\sigma\|).
\qedhere
\end{align}
\end{proof}

\begin{definition}[Jet inversion]
\label{def:jet_inversion}
On the set of invertible $k$-jets,
\begin{equation}
J^k(U,V)^\times = U\times V\times GL(E,F)\times \prod_{j=2}^k L^j_{\on{sym}}(E;F),
\end{equation}
inversion $(\cdot)^{-1}:J^k(U,V)^\times\to J^k(V,U)^\times$ is defined as
\begin{equation}
(x,y,p_1,\dots,p_k)^{-1}=\big(y,x,q_1,\dots,q_k\big),
\end{equation}
where $(q_1,\dots,q_k)$ is defined recursively as
\begin{equation}
q_1 = p_1^{-1} \in GL(F,E),
\qquad
q_k = - \sum_{j=1}^{k-1}\sum_{\substack{1\leq i_1,\dots,i_j\leq k\\i_1+\dots+i_j=k}} q_j(p_{i_1}\o q_1,\dots,p_{i_j}\o q_1).
\end{equation}
\end{definition}

\begin{lemma}[Jet inversion]
\label{lem:jet_inversion}
Jet inversion is continuous and satisfies for any diffeomorphism $f$ the defining property
\begin{equation}
(j^k_xf)^{-1} = j^k_{f(x)}(f^{-1}).
\end{equation}
\end{lemma}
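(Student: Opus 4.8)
The plan is to establish the two assertions separately: continuity of the algebraically defined jet inversion of \cref{def:jet_inversion}, and the identity $(j^k_xf)^{-1}=j^k_{f(x)}(f^{-1})$ for a $C^k$-diffeomorphism $f$.

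\emph{Continuity.} I would argue by induction on $k\ge 1$ along the recursion. The source and target components are merely interchanged and hence depend continuously on the jet, while the linear part $q_1=p_1^{-1}$ depends continuously (indeed smoothly) on $p_1\in GL(E,F)$, since inversion of topological linear isomorphisms between Banach spaces is continuous. For the inductive step, note that in the formula for $q_k$ every multi-index satisfies $j\le k-1$ and $i_1,\dots,i_j\le k$, so $q_k$ is a finite sum of multi-compositions of $q_1,\dots,q_{k-1}$ — continuous in $p_1,\dots,p_{k-1}$ by the inductive hypothesis — with the multilinear maps $p_1,\dots,p_k$. Since multi-composition of multilinear maps between Banach spaces is a bounded, hence continuous, operation (as already used in the proof of \cref{lem:jet_composition}), it follows that $q_k$ depends continuously on $(p_1,\dots,p_k)$, completing the induction.

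\emph{The defining property.} If $f:U\to V$ is a $C^k$-diffeomorphism, then differentiating $f^{-1}\o f=\Id_U$ gives $df(x)\in GL(E,F)$, and $f^{-1}$ is $C^k$ by the inverse function theorem; thus both $j^k_xf\in J^k(U,V)^\times$ and $j^k_{f(x)}(f^{-1})\in J^k(V,U)^\times$ are defined. Put $\si:=j^k_xf=(x,f(x),p_1,\dots,p_k)$. Applying \cref{lem:jet_composition} to the composition $f^{-1}\o f=\Id_U$ yields $j^k_{f(x)}(f^{-1})\bullet\si=j^k_x(\Id_U)=(x,x,\Id_E,0,\dots,0)$, the identity jet. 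I then claim that the algebraically defined inverse $(j^k_xf)^{-1}=(f(x),x,q_1,\dots,q_k)$ solves the same equation. Indeed, by Fa\`a di Bruno's formula the degree-$m$ part of $\pi_k(q\o p)$ is $\sum_{j=1}^m\sum_{i_1+\dots+i_j=m}q_j(p_{i_1},\dots,p_{i_j})$, the $p$'s being inserted into the slots of $q_j$; equating this with the degree-$m$ part of the identity jet, isolating the top term $q_m(p_1,\dots,p_1)$, and precomposing each of its arguments with $q_1$ (legitimate since $p_1\o q_1=\Id_F$) reproduces exactly the recursion of \cref{def:jet_inversion}. Finally, this equation has at most one solution with source $f(x)$ and target $x$: the degree-$1$ part is forced to be $p_1^{-1}$, and the degree-$m$ equation determines the degree-$m$ part from the lower ones. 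Hence $(j^k_xf)^{-1}=j^k_{f(x)}(f^{-1})$.

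The argument presents no genuine obstacle; the only slightly delicate point is matching, in the Fa\`a di Bruno expansion, the slot-wise compositions $p_{i_\ell}\o q_1$ appearing in the recursion with the terms produced by precomposing the degree-$m$ identity with $q_1$ in each argument — routine but notation-heavy bookkeeping.
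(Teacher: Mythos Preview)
Your proof is correct and follows essentially the same approach as the paper: continuity is obtained from continuity of inversion $GL(E,F)\to GL(F,E)$ together with the boundedness of multilinear composition (you just make the induction explicit), and the defining property is derived from the Fa\`a di Bruno expansion of $f^{-1}\o f=\Id_U$. The paper differentiates this identity directly and matches the resulting recursion to \cref{def:jet_inversion}, whereas you route the same computation through \cref{lem:jet_composition} and a uniqueness argument, but the underlying algebra is identical.
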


\begin{proof}
The continuity of jet inversion follows from the continuity of the inversion $GL(E,F)\to GL(F,E)$ and the continuity of the composition map in the category of multi-linear functions between Banach spaces.
By $k$-fold differentiation of the equation $g\o f=\Id_U$, one obtains for $k=1$ that $(g^{(1)}\o f)(f^{(1)})=\Id_E$ and for $k\geq 2$ that
\begin{equation}
\sum_{j=1}^k\sum_{\substack{1\leq i_1,\dots,i_j\leq k\\i_1+\dots+i_j=k}} (g^{(j)}\o f)(f^{(i_1)},\dots,f^{(i_j)})=0.
\end{equation}
Setting $p_j=f^{(j)}$ and $q_j=g^{(j)}\o f$ and solving for $q_k$ yields the defining property.
\end{proof}

\begin{lemma}
\label{lem:curves}
The space of smooth curves $c\in C^\infty(\mathbb R,C^k(U,V))$ is isomorphic to the space of all $\hat c: \mathbb R\x U\to V$ such that for all $i\in \mathbb N$ and $j\in\mathbb N_{\leq k}$, the iterated derivative 
$\p_t^i d_U^j\hat c: \mathbb R\x U\to L^j(E,F)$ exists and is continuous. 
\end{lemma}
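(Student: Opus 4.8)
The plan is to prove the asserted identification through the obvious bijection $c\mapsto\hat c$, $\hat c(t,x):=c(t)(x)$, by showing that it carries $C^\infty(\mathbb R,C^k(U,V))$ exactly onto the described space of maps. Since $V$ is open in $F$, the set $C^k(U,V)$ is open in the convenient vector space $C^k(U,F)$, so a curve into $C^k(U,V)$ is smooth precisely when it is smooth as a curve into $C^k(U,F)$. I would use that, by \cite[\S41]{KrieglMichor97}, $C^k(U,F)$ carries the initial convenient structure with respect to the iterated differentials $d_U^j\colon C^k(U,F)\to C\big(U,L^j_{\on{sym}}(E;F)\big)$ for $0\le j\le k$, each target being equipped with the compact-open topology (again a convenient vector space). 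Hence a curve $c\colon\mathbb R\to C^k(U,F)$ is smooth if and only if every $d_U^j\o c$ is a smooth curve into $C\big(U,L^j_{\on{sym}}(E;F)\big)$. Together with the bookkeeping identity $\widehat{d_U^j\o c}(t,x)=d_U^j\hat c(t,x)$, this reduces the whole statement to the case $k=0$.

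So the key step is: for a Banach space $Z$, a curve $\gamma\colon\mathbb R\to C(U,Z)$ is smooth if and only if $\p_t^i\hat\gamma\colon\mathbb R\x U\to Z$ exists and is continuous for every $i\in\mathbb N$. For the forward direction, smoothness of $\gamma$ gives that each $\gamma^{(i)}\colon\mathbb R\to C(U,Z)$ exists and is continuous; composing with the bounded linear evaluations $\on{ev}_x$ yields $\p_t^i\hat\gamma(t,x)=\gamma^{(i)}(t)(x)$, and joint continuity in $(t,x)$ follows since in the compact-open topology a convergent sequence in $U$ together with its limit forms a compact set on which $\gamma^{(i)}$ converges uniformly. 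For the converse I would set $\gamma(t):=\hat\gamma(t,\cdot)$, which lies in $C(U,Z)$ because $\hat\gamma$ is continuous, and estimate the difference quotient via Taylor's formula with integral remainder,
\[
\frac{\hat\gamma(t+s,x)-\hat\gamma(t,x)}{s}-\p_t\hat\gamma(t,x)=\int_0^1\big(\p_t\hat\gamma(t+\sigma s,x)-\p_t\hat\gamma(t,x)\big)\,d\sigma,
\]
whose norm is controlled uniformly for $x$ in any compact $K\subset U$ and $s$ small by the uniform continuity of $\p_t\hat\gamma$ on the compact box $[a,b]\x K$. This shows $\gamma$ is differentiable with $\gamma'(t)=\p_t\hat\gamma(t,\cdot)$; the hypotheses pass to $\gamma'$ (its associated map is $\p_t\hat\gamma$, and $\p_t^i\p_t\hat\gamma=\p_t^{i+1}\hat\gamma$), so by induction all derivatives of $\gamma$ exist and are continuous, whence $\gamma$ is smooth by the standard characterization of smooth curves into a convenient vector space.

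Finally I would assemble: $c$ is smooth into $C^k(U,F)$ iff each $d_U^j\o c$ is smooth into $C\big(U,L^j_{\on{sym}}(E;F)\big)$ for $0\le j\le k$, iff—applying the $k=0$ case with $Z=L^j_{\on{sym}}(E;F)$—the derivatives $\p_t^i\,\widehat{d_U^j\o c}=\p_t^i d_U^j\hat c$ exist and are continuous for all $i\in\mathbb N$ and all $j\le k$, which is precisely the condition in the statement; bijectivity of $c\mapsto\hat c$ onto the described set is then immediate. I expect the main work to be organizational rather than deep: making precise the convenient structure on $C^k(U,F)$ so that smoothness of a curve may be tested componentwise along the $d_U^j$, and carrying out the uniform-continuity-on-compacta estimate for difference quotients in the converse. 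Alternatively, the lemma is an instance of a mixed-order exponential law and could be deduced from results in \cite{KrieglMichor97}.
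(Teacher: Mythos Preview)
Your proposal is correct and follows essentially the same approach as the paper: both arguments unwind smoothness of $c$ in $C^k(U,F)$ as uniform convergence on compacta of the difference quotients of each $d_U^j\hat c$ for $0\le j\le k$, and then iterate in $t$. Your version is more explicitly organized (the reduction to $k=0$ and the Taylor remainder estimate are spelled out), whereas the paper compresses the same content into a single displayed equivalence and the remark that open subsets of Banach spaces are compactly generated.
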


\begin{proof}
$\lim_{s\to 0}\frac1s (c(t+s)-c(t)) = c'(t)$ exists and converges in $C^k(U,F)$ if and only 
\[
\lim_{s\to 0} \left\| \tfrac1{s}\big(d^j_V\hat c(t+s,x)-d^j_V\hat c(t,x)\big) - \p_t d^j_V\hat c(t,x)\right\|_{L^j_{\text{sym}}(E;F)} = 0\quad
\text{ for  }0\le j\le k
\]
uniformly for $(t,x)$ in compact subsets of $\mathbb R\x U$, since open subsets of Banach spaces are compactly generated.
By iteration this holds also for $\p_t^i$ instead of $\p_t$.
\end{proof}

\begin{lemma}[Pull-backs and push-forwards]
\label{lem:Ck_calculus}
\begin{enumerate}[(a)]
\item\label{lem:Ck_calculus:1} The pull-back $f^*:C^k(V,W)\ni g \mapsto g\o f \in C^k(U,W)$ along any function $f\in C^k(U,V)$ is smooth.
\item\label{lem:Ck_calculus:2} The push-forward $g_*:C^k(U,V)\ni f \mapsto g\o f \in C^k(U,W)$ along any function $g \in C^\infty(V,W)$ is smooth.
\item\label{lem:Ck_calculus:3} The push-forward $g_*:C^k(U,V)\ni f \mapsto g\o f \in C^k(U,W)$ along any function $g\in C^{k+\ell}(V,W)$ is 
$C^\ell$.
\item\label{lem:Ck_calculus:4} The composition $C^k(U,V)\times C^k(V,W)\ni (f,g)\mapsto g\o f \in C^k(U,W)$ is sequentially continuous.
\end{enumerate}
\end{lemma}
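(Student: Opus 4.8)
The plan is to reduce all four parts to the jet calculus of \cref{sec:jets}. Recall that $h\colon U\to W$ is $C^k$ if and only if its jet prolongation $j^kh\colon x\mapsto j^k_xh$ is a continuous map $U\to J^k(U,W)$, that $f_n\to f$ in $C^k(U,V)$ entails uniform convergence of the jet prolongations on compact subsets of $U$, and that by \cref{lem:jet_composition} one has $j^k_x(g\o f)=j^k_{f(x)}g\bullet j^k_xf$, where $\bullet$ is locally Lipschitz, uniformly on bounded subsets of $J^k$, with the explicit moduli recorded there. I regard $C^k(V,W)$ as a $c^\infty$-open subset of the convenient vector space $C^k(V,F')$, with $F'$ the Banach space containing $W$, and likewise for the other pairs. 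For part~\ref{lem:Ck_calculus:1}, the key point is that $f^*$ is the restriction of a \emph{linear} map $C^k(V,F')\to C^k(U,F')$: the Fa\`a di Bruno formula writes $d^j(g\o f)(x)$ as a fixed sum of multilinear compositions of $d^mg(f(x))$ with $m\le j$ and $d^\ell f(x)$ with $\ell\le j\le k$, so that on any compact $K\subseteq U$, using that $f(K)$ is compact, $f^*$ maps bounded sets to bounded sets; a bounded linear map between convenient vector spaces is smooth, and it restricts to a smooth map between the relevant $c^\infty$-open subsets.

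For part~\ref{lem:Ck_calculus:2}, $g_*$ is no longer linear, so I would test smoothness on smooth curves. By \cref{lem:curves} a smooth curve in $C^k(U,V)$ is a map $\hat c\colon\mathbb R\x U\to V$ with $\p_t^id_U^j\hat c$ continuous for all $i\in\mathbb N$ and $j\le k$. Then $g_*\o c$ corresponds to $(t,x)\mapsto g(\hat c(t,x))$, and the chain rule in all variables writes $\p_t^id_U^j[g\o\hat c]$ as a universal sum of multilinear compositions of derivatives of $g$ evaluated along $\hat c$ with derivatives $\p_t^{i'}d_U^{j'}\hat c$ for which $j'\le j\le k$. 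Since $g$ is $C^\infty$ all its derivatives are continuous, and the $\hat c$-derivatives are continuous by hypothesis, so $\p_t^id_U^j[g\o\hat c]$ is continuous for all $i$ and $j\le k$; thus $g_*$ carries smooth curves to smooth curves, hence is smooth.

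For part~\ref{lem:Ck_calculus:3} I would induct on $\ell$. The base case $\ell=0$ is continuity of $g_*$, which follows from the jet description exactly as in the argument for part~\ref{lem:Ck_calculus:4} below, now with $g$ fixed so that the uniform-continuity step is unproblematic. For the inductive step, the derivative of $g_*$ at $f$ in direction $h\in C^k(U,F)$ should be $x\mapsto dg(f(x))(h(x))=\widetilde{dg}\o(f,h)$, where $\widetilde{dg}\colon V\x F\to F'$, $(v,w)\mapsto dg(v)(w)$; using the mean value inequality for $g\in C^{k+\ell}$ with $\ell\ge1$ one checks that the difference quotients converge in $C^k(U,W)$, so $g_*$ is differentiable with this derivative. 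Now $\widetilde{dg}$ is $C^{k+\ell-1}=C^{k+(\ell-1)}$, being the composition of the $C^{k+\ell-1}$ map $dg\x\Id$ with the continuous bilinear evaluation, so by the induction hypothesis $\widetilde{dg}_*\colon C^k(U,V\x F)\to C^k(U,F')$ is $C^{\ell-1}$; precomposing with the bounded linear isomorphism $(f,h)\mapsto(f,h)$, $C^k(U,V)\x C^k(U,F)\to C^k(U,V\x F)$, exhibits $d(g_*)$ as $C^{\ell-1}$, whence $g_*$ is $C^\ell$.

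For part~\ref{lem:Ck_calculus:4}, let $f_n\to f$ in $C^k(U,V)$ and $g_n\to g$ in $C^k(V,W)$, and fix a compact $K\subseteq U$. Since $f_n\to f$ uniformly on $K$, the set $f(K)\cup\bigcup_nf_n(K)$ lies in a compact $L\subseteq V$; then $j^kg_n\to j^kg$ uniformly on $L$ while $j^kg$ is uniformly continuous on $L$, and a two-term estimate gives $(j^kg_n)\o f_n\to(j^kg)\o f$ uniformly on $K$. Combined with $j^kf_n\to j^kf$ uniformly on $K$ and the uniform Lipschitz bound for $\bullet$ from \cref{lem:jet_composition} (all the jets involved stay in a bounded set), this yields $j^k(g_n\o f_n)\to j^k(g\o f)$ uniformly on $K$, hence on all compacta, i.e.\ convergence in $C^k(U,W)$. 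I expect the main obstacle to be part~\ref{lem:Ck_calculus:3}: pinning down the derivative of $g_*$ and the convergence of its difference quotients in the $C^k$-topology with only finite regularity on $g$ available, and keeping the regularity bookkeeping of the induction consistent (one order dropped on $g$ and on the arguments at each step). In part~\ref{lem:Ck_calculus:4} the only delicate point is that one should expect no more than sequential continuity, since the uniform-continuity argument for $j^kg$ breaks down once $g$ itself is replaced by a varying net.
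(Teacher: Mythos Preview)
Your proposal is correct and follows essentially the same route as the paper: part~\ref{lem:Ck_calculus:1} via linearity and boundedness of $f^*$, part~\ref{lem:Ck_calculus:2} by testing on smooth curves via \cref{lem:curves}, part~\ref{lem:Ck_calculus:4} by the compactness of $L=f(K)\cup\bigcup_nf_n(K)$ combined with Fa\`a di Bruno / jet composition, and part~\ref{lem:Ck_calculus:3} by induction on $\ell$. The one organizational difference worth noting is in \ref{lem:Ck_calculus:3}: the paper factors the derivative as
\[
C^k(U,V)\xrightarrow{(g')_*} C^k(U,L(F,G))\xrightarrow{\text{mult}} L\big(C^k(U,F),C^k(U,G)\big),
\]
establishes continuity of the second (linear) arrow once via an exponential-law argument on compacts, and then inducts on the first arrow alone; your packaging via $\widetilde{dg}_*$ gives the uncurried map $(f,h)\mapsto d g_*(f)(h)$ as $C^{\ell-1}$, but to conclude that $dg_*$ itself is $C^{\ell-1}$ into the operator space you still need exactly that currying step, which the paper's factorization makes explicit.
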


\begin{proof}
As usual, the topology on $C^k(V,W)$ is induced via $k$-jets from the compact-open topology on $C(V,W\x \on{Poly}^k(F,G))$.
Here, $\on{Poly}^k(F,G) =\prod_{j=1}^k L^j_{\on{sym}}(F;G)$ is the Banach space described in \cref{def:jets}. 
\begin{enumerate}[(a),wide]
\item For $f\in C^k(U, V)$ the pull-back mapping 
\begin{alignat}{5}
f^* &= C^k(g, W) &&: C^k(V, W)&&\to C^k(U, W)\quad&&\text{is the restriction of the linear mapping }
\\ 
f^* &= C^k(g, G) &&: C^k(V, G)&&\to C^k(U, G),\quad&&\text{which is bounded, thus smooth. }
\end{alignat}

\item For $g\in C^{\infty}(V,W)$, the push-forward $g_* = C^k(U,g): C^k(U,V)\to C^k(U,W)$ maps smooth curves to smooth curves by \cref{lem:curves} and the iterated chain rule.
By the principles of convenient analysis \cite{KrieglMichor97}, the mapping $g_*$ is $C^\infty$ in the usual sense since the spaces $C^k(U,F)$ are inverse limits of Banach spaces and thus the $c^\infty$-topology coincides with the usual one. 

\item 
We first consider $\ell=0$.  
By Fa\`a di Bruno's formula, for any $m\leq k$,
\begin{equation}
\partial_x^m g(f(x))
=
\sum_{n=1}^m 
g^{(n)}(f(x))
B_{m,n}(f^{(1)}(x),\dots,f^{(m-n+1)}(x)),
\end{equation}
where $B_{m,n}$ are the Bell polynomials. 
The function
\begin{equation}
C(U,V) \ni f \mapsto j^{n}_fg \in C(U,\on{Poly}^n(F,G))
\end{equation}
is continuous in the compact-open topologies. 
Thus, overall, one has a continuous map 
\begin{align}
C^k(U,V) \ni f \mapsto g(f(x)) \in C^k(U,G).
\end{align}
This proves the statement for $\ell=0$.
We next consider $\ell=1$. To identify a candidate derivative of $g_*$, we compute for any $x \in U$
\begin{equation}
\label{equ:pushforward:evx}
d(\on{ev}_x\circ g_*)(f)(h)
=
\lim_{t\to 0} \frac1t\big(g(f(x)+th(x))-g(f(x))\big)
=
g'(f(x))h(x).
\end{equation}
Thus, if $dg_*$ is differentiable, then its derivative is given by
\begin{equation}
\label{equ:candidate_derivative}
\xymatrix@R=0pt{
C^k(U,V)
\ar[r]
&
C^k(U,L(F,G))
\ar[r]
&
L(C^k(U,F),C^k(U,G))
\\
*\txt<4.5em>{$f$}
\ar@{|->}[r]
&
*\txt<7em>{$g'\o f$}
\ar@{|->}[r]
&
*\txt<10.5em>{$(h\mapsto (g'\o f).h)$}
}
\end{equation}
The continuity of the left arrow follows from the already established case $\ell=0$.
The continuity of the right arrow can be seen as follows. 
For any compact set $K\subseteq U$, the function
\begin{equation}
C^k(K,L(F,G))\times C^k(K,F) 
= 
C^k(K,L(F,G)\times F)
\xrightarrow{\on{ev}_*}
C^k(K,G)
\end{equation}
is continuous because the evaluation map $\on{ev}$ is linear, and differentiation commutes with linear maps.
By the exponential law for continuous linear maps between Banach spaces, this is equivalent to the continuity of the function
\begin{equation}
C^k(K,L(F,G))\to L(C^k(K,F),C^k(K,G)).
\end{equation}
As the restriction $C^k(U,\cdots)\to C^k(K,\cdots)$ is continuous, one obtains continuity of the function
\begin{equation}
C^k(U,L(F,G))\to L(C^k(U,F),C^k(K,G)).
\end{equation}
As $K$ is arbitrary, and $C^k(U,G)$ carries the initial topology with respect to all restrictions to $C^k(K,G)$, one obtains continuity of the function
\begin{equation}
C^k(U,L(F,G))\to L(C^k(U,F),C^k(U,G)). 
\end{equation}
This establishes the continuity of the candidate derivative \eqref{equ:candidate_derivative} in the sense of Fr\'echet. 
Evaluating the candidate derivative along the curve $s\mapsto(f+sh,h)$ results in a continuous curve, whose Riemann integral is determined by \eqref{equ:pushforward:evx} and given by 
$$
\int_0^t (g'\circ(f+sh))(h) ds
= 
g\o(f+th)-g\o f.
$$
It follows that $g_*$ is Gateaux differentiable at $f$ in the direction $h$ with derivative $(g'\o f)(h)$.
Overall, we have shown that $g_*$ is continuously Fr\'echet differentiable.
This proves the case $\ell=1$.
Finally, for $\ell>1$, one proceeds by induction, using the fact that the Fr\'echet derivative of $g_*$ is a push-forward \eqref{equ:candidate_derivative}.
\item 
We first consider the case $k=0$; see \cite[Proposition 13.9 (c)]{Glockner20}.
Let $f_n\to f$ and $g_n\to g$ in the compact-open topologies, let $K$ be a compact subset of $U$, and let $O$ be an open subset of $V$ which contains $g(f(K))$. Then, the set $L=f(K)\cup\bigcup_n f_n(K)$ is compact.
Thus, for sufficiently large $n$, $g_n\o f_n(K)\subseteq g_n(L)\subseteq O$. 
As $K$ and $O$ were arbitrary, $g_n\o f_n$ converges in the compact-open topology to $g\o f$.
This proves the case $k=0$. 
Applying this result to the Fa\`a di Bruno formula as in \ref{lem:Ck_calculus:3} proves case $k>0$.
\qedhere
\end{enumerate} 
\end{proof}

\begin{definition}[Jets between manifolds]
Let $M$ and $N$ be manifolds with atlas $(U_i,u_i)$ and $(V_j,v_j)$, respectively.
Then $J^k(M,N)$ is the manifold obtained by gluing together the open sets $J^k(u_i(U_i),v_j(V_j))$ via the chart change mappings
\begin{multline}
\notag
J^k(u_{i'}(U_i\cap U_{i'}),v_{j'}(V_j\cap V_{j'}))
\ni \sigma \mapsto
\\
\mapsto j^k(v_j\o v_{j\i}) \bullet \sigma \bullet j^k(u_{i'}\o u_i\i) \in
J^k(u_i(U_i\cap U_{i'}),v_j(V_j\cap V_{j'})).
\end{multline}
\end{definition}

\begin{definition}[Jets of sections of fiber bundles]
Let $M$ be a manifold with atlas $(U_i,u_i)$,
and let $p:N\to M$ be a fiber bundle over $M$ with standard fiber $S$, fiber bundle atlas $(U_i,\ps_i:N|U_i\to U_i\times S)$, and chart changes $(\ps_{ij}=\on{pr}_2\o\ps_i\o\ps_j\i:U_i\cap U_j\x S\to S)$.
Then $J^k(E)$ is the manifold obtained by gluing together the manifolds $J^k(U_i,S)$ via the chart change mappings
\begin{equation}
J^k(U_i\cap U_j,S) \ni \sigma \mapsto j^k(\ps_{ij}(\al(\si),\cdot))\bullet \sigma \in J^k(U_i\cap U_j,S).
\end{equation}
The source projection $\al:J^k(N)\to M$ defines a fiber bundle over $M$.
\end{definition}

\section{Auxiliary results on right-invariant functions}
\label{sec:CkGGG}
The methods developed in this section are used to identify the half-Lie group structure on the groups $G^k$ of differentiable elements in $G$.
Throughout \cref{sec:CkGGG}, $G$ is a Banach right half-Lie group with identity element $e \in G$, and $k \in \mathbb N$.

Left-translation by any group element is a right-invariant function.
Conversely, every right-invariant function is a left-translation by some group element.
This motivates the following definition.

\begin{definition}[Right-invariant $C^k$ functions]
\label{def:CkGGG}
A function $f:G\to G$ is called right-invariant if $f\o\mu^y=\mu^y\o f$ for all $y \in G$.
For any $k \in \mathbb N$, let $\Diff_{C^k}(G)^G$ denote the space of right-invariant $C^k$ diffeomorphisms on $G$, endowed with the compact-open topology of $C(G,J^k(G,G))$.
\end{definition}

The main result of this section is \cref{thm:CkGGG}, which establishes that $\Diff_{C^k}(G)^G$ is a Banach half-Lie group if $G$ admits a right-invariant local addition.
This result would be wrong without the condition of right-invariance.
Indeed, if $G$ is infinite-dimensional, then $\Diff_{C^k}(G)$ is not a Banach manifold, and the composition $\Diff_{C^k}(G)\times \Diff_{C^k}(G)\to \Diff_{C^k}(G)$ is discontinuous.

\begin{theorem}[Right-invariant $C^k$ diffeomorphisms]
\label{thm:CkGGG}
Let $G$ be a Banach right half-Lie group carrying a right-invariant local addition.
Then, for any $k \in \mathbb N$, the space $\Diff_{C^k}(G)^G$ is a Banach half-Lie group with right-invariant local addition, and evaluation at the identity element $e \in G$ is smooth:
\begin{equation}
\on{ev}_e:\Diff_{C^k}(G)^G\ni f\mapsto f(e)\in G.
\end{equation}
\end{theorem}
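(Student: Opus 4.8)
The plan is to build an explicit atlas on $\Diff_{C^k}(G)^G$ using the right-invariant local addition $\ta$ on $G$, show that the chart changes are smooth, that right translations are smooth in these charts, and that the evaluation $\on{ev}_e$ is smooth because in a chart it becomes (essentially) a bounded linear evaluation map. The key observation is that a right-invariant $C^k$ diffeomorphism $f$ is completely determined by its value $f(e)$, or equivalently by the right-invariant $C^k$ vector field $x\mapsto T_e\mu^x(\text{something})$; but to get a \emph{Banach manifold} structure one should not identify with $G$ directly (whose model space need not be the right one) but rather model on the Banach space of right-invariant $C^k$ vector fields along the identity map of $G$.

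\textbf{Step 1: The model space.} Let $\X_{C^k}(G)^G$ denote the space of right-invariant $C^k$ vector fields on $G$, i.e. $C^k$ sections $X:G\to TG$ of $\pi_G$ with $X\o\mu^y=T\mu^y\o X$ for all $y$. Such an $X$ is determined by $X(e)\in T_eG$, and conversely $X(e)$ determines $X=R_{X(e)}$ provided the latter is $C^k$. I would give $\X_{C^k}(G)^G$ the topology induced from $C(G,J^k(G,TG))$ with the compact-open topology, restricted to the right-invariant sections; using right-invariance, the jet of $X$ at any point is determined by its jet near $e$, so this is the same as a Banach-space norm on the $k$-jet of $R_{X(e)}$ at $e$ (finitely many symmetric multilinear maps, hence Banach). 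I would check that $\X_{C^k}(G)^G$ is complete, hence a Banach space, by a standard argument: a Cauchy sequence of jets converges, and right-invariance plus the compact-open convergence shows the limit jets glue to a genuine $C^k$ right-invariant vector field.

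\textbf{Step 2: Charts.} For $f\in\Diff_{C^k}(G)^G$, define a chart centered at $f$ by sending $g$ near $f$ to the right-invariant $C^k$ vector field $X$ with $X(x)=(\pi_G,\ta)^{-1}(f(x),g(x))$ — i.e. $g(x)=\ta(X(x))$ — whenever $(f(x),g(x))$ lies in the range of $(\pi_G,\ta)$. By right-invariance of $\ta$ and of $f,g$, the resulting $X$ is right-invariant; and $X$ is $C^k$ because $f$, $g$ are $C^k$ and $\ta$, $(\pi_G,\ta)^{-1}$ are smooth, using the jet calculus of \cref{sec:jets} (Lemmas~\ref{lem:jet_composition}, \ref{lem:jet_evaluation}, \ref{lem:jet_inversion}) to see that composition with the smooth maps $\ta$ and $(\pi_G,\ta)^{-1}$ is smooth on $C^k$. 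The chart domain is the set of $g$ with $g(e)$ close enough to $f(e)$ (by right-invariance, closeness at $e$ forces closeness everywhere since $V$ is open and $\ta$ is right-equivariant). The chart-change maps are of the form $X\mapsto$ [vector field whose value at $x$ is obtained from $X(x)$ by composing with $(\pi_G,\ta)^{-1}\o(\Id\times(f_2\o f_1^{-1}))\o(\pi_G,\ta)$ near the zero section]; these are smooth by the same push-forward-by-smooth-maps arguments (\cref{lem:Ck_calculus}\ref{lem:Ck_calculus:2} applied to jets).

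\textbf{Step 3: Right translations and the local addition.} A right translation $R_h:g\mapsto g\o\mu^h=\mu^h\o g$ (the two agree by right-invariance of $g$? no — $g\o\mu^h$ need not equal $\mu^h\o g$; but $g\mapsto g\o\mu^h$ preserves right-invariance since $(g\o\mu^h)\o\mu^y=g\o\mu^{hy}=g\o\mu^y\o\mu^h\cdots$). Let me instead note: the group $\Diff_{C^k}(G)^G$ acts on itself by composition on the right, $R_\ph(g)=g\o\ph$; this preserves right-invariance because $(g\o\ph)\o\mu^y=g\o\ph\o\mu^y=g\o\mu^y\o\ph$ using right-invariance of $\ph$... wait, $\ph$ right-invariant means $\ph\o\mu^y=\mu^y\o\ph$, so $(g\o\ph)\o\mu^y=g\o\mu^y\o\ph=\mu^y\o g\o\ph=\mu^y\o(g\o\ph)$, good. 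In the chart centered at $f$ and the chart centered at $f\o\ph$, right translation by $\ph$ is, at the level of vector fields, $X\mapsto (x\mapsto(\pi_G,\ta)^{-1}(f(\ph(x)),g(\ph(x))))$, which is just precomposition of the defining section with $\ph$; since $\ph$ is $C^k$ this is a bounded operation on jets (\cref{lem:jet_composition}), hence the right translation is smooth — indeed, it is the identity map in suitably matched charts, exactly as in the proof of \cref{thm:fiberdiffextension}. The right-invariant local addition on $\Diff_{C^k}(G)^G$ is then defined fiberwise using $\ta$: given a tangent vector at $g$, represented in a chart as an element of $\X_{C^k}(G)^G$, apply $\ta$ pointwise; right-invariance of this new local addition follows from right-invariance of $\ta$.

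\textbf{Step 4: Smoothness of $\on{ev}_e$.} In the chart centered at $f$, $\on{ev}_e$ reads $X\mapsto\ta(X(e))$, the composition of the bounded linear map $X\mapsto X(e)\in T_eG$ (bounded because the norm on $\X_{C^k}(G)^G$ dominates the $0$-jet at $e$) with the smooth map $\ta|_{T_eG\cap V}:T_eG\supseteq V_e\to G$. Hence $\on{ev}_e$ is smooth.

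\textbf{The main obstacle} I anticipate is Step 1 together with the precise verification in Step 2 that the chart changes are smooth: one must be careful that the model space $\X_{C^k}(G)^G$, defined via $k$-jets at $e$, really is a Banach space (completeness uses that $C^k$-convergence of the jets at $e$ plus right-invariance reconstructs a $C^k$ map on all of $G$ — this needs the right-invariant local addition to propagate local $C^k$-regularity, via the fact that $\ta$ conjugates $\mu^y$ to a fiberwise-linear action of $T\mu^y$), and that composition of jets with the fixed smooth maps $\ta$, $(\pi_G,\ta)^{-1}$ is smooth as a map between these jet-Banach-spaces, for which the estimates in \cref{lem:jet_composition} and \cref{lem:Ck_calculus}\ref{lem:Ck_calculus:2} are exactly what is needed. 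The Hausdorff property and the verification that multiplication and inversion are merely continuous (not smooth) go as in \cref{thm:non-split}(b): continuity of composition follows from sequential continuity of jet composition (\cref{lem:Ck_calculus}\ref{lem:Ck_calculus:4}) together with the fact that the chart domains are determined by behaviour near $e$.
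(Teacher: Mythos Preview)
Your outline is essentially the paper's own proof: the paper defers to Lemmas~\ref{lem:banach}, \ref{lem:CkGGG_manifold}, and \ref{lem:CkGGG_topological_group}, which respectively establish your Step~1 (the Banach model space $\X_{C^k}(G)^G$), your Steps~2--4 (charts $v_f(X)=\ta\o X\o f$, smoothness of right translations as the identity in matched charts, the induced local addition, and smoothness of $\on{ev}_e$), and the topological-group property. Two points deserve tightening. First, there is a mismatch between your Step~1 and Step~2: in Step~1 you take $\X_{C^k}(G)^G$ to consist of genuine vector fields ($\pi_G\o X=\Id_G$), but in Step~2 your chart formula $X(x)=(\pi_G,\ta)^{-1}(f(x),g(x))$ produces a section along $f$ ($\pi_G\o X=f$); the paper avoids this by using $g=\ta\o X\o f$ with $X\in\X_{C^k}(G)^G$, so that the same model space serves all charts. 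Second, and more substantively, you address continuity of composition via \cref{lem:Ck_calculus}\ref{lem:Ck_calculus:4} but say nothing concrete about continuity of \emph{inversion}; the paper's argument (Lemma~\ref{lem:CkGGG_topological_group}) is not entirely trivial---it first shows $f\mapsto f^{-1}$ is continuous in $C^0$ using that $f^{-1}(e)=f(e)^{-1}$ and the group inversion on $G$, then bootstraps to $C^k$ by writing $j^k(f^{-1})=(j^k_{f^{-1}}f)^{-1}$ and invoking the continuity of jet inversion from \cref{lem:jet_inversion}, together with a compactness argument to control $j^k_{f_n^{-1}}f_n$. This is the one genuine missing ingredient in your sketch.
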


\begin{proof}
This follows from \cref{lem:CkGGG_manifold,lem:CkGGG_topological_group} below.
\end{proof}

\begin{lemma}[Local boundedness of right multiplication]
\label{lem:multiplication_bounded}
There exists a neighborhood $U$ of $e \in G$, contained in some chart domain, such that for all $k \in \mathbb N$ and all compact subsets $K \subseteq G$,
\begin{align}
\sup_{x \in K}\sup_{y \in U} (\|j^k_x\mu^y\|+\|j^k_x\mu^{y\i}\|) < \infty,
\end{align}
with respect to the norm induced from $T_eG$ via the chart.
\end{lemma}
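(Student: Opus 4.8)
The plan is to exploit the right-invariance of the local addition: in the charts it induces, every right translation $\mu^y$ is represented by a \emph{linear} map, and in fact by the identity once the tangent spaces are identified with $T_eG$ by right translation. The asserted supremum then reduces to an essentially constant quantity, independent of $k$, $x$, and $y$.

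First I would set up the charts. Writing $\ta_g:=\ta|_{T_gG\cap V}$ for the restriction of the local addition to the fiber over $g$, \cref{def:addition} makes each $\ta_g$ a chart centered at $g$ (its inverse $u_g:=\ta_g\i$ sends $g$ to $0$), and the right-invariant norm $\|\xi\|_g:=\|(T_e\mu^g)\i\xi\|_{T_eG}$ on $T_gG$ is the norm ``induced from $T_eG$ via the chart.'' I would take $U$ to be a neighborhood of $e$ inside the domain of $\ta_e$. The key computation expresses $\mu^y$ in the chart $\ta_x$ at the source and $\ta_{xy}$ at the target: by the right-invariance identity $\ta\o T\mu^y=\mu^y\o\ta$ of \cref{def:addition}, for small $\xi\in T_xG\cap V$ one has $\mu^y(\ta_x(\xi))=\ta(T_x\mu^y\xi)=\ta_{xy}(T_x\mu^y\xi)$, so the local representative $\ta_{xy}\i\o\mu^y\o\ta_x$ equals the linear map $T_x\mu^y:T_xG\to T_{xy}G$. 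In particular all jet components of order $\ge 2$ vanish, and the source and target coordinates are $0$. Passing to the right-invariant norms, $\mu^y\o\mu^x=\mu^{xy}$ gives $T_x\mu^y\o T_e\mu^x=T_e\mu^{xy}$, whence $(T_e\mu^{xy})\i\o T_x\mu^y\o T_e\mu^x=\Id_{T_eG}$; that is, the representative is the identity and $j^k_x\mu^y=(0,0,\Id,0,\dots,0)$ with jet-norm $1$ for every $k$ and every $x,y$. The identical computation with $y$ replaced by $y\i$ handles $\mu^{y\i}$, so the sum is bounded by $2$.

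What remains --- and is the only place where compactness of $K$ is genuinely used --- is to reconcile this right-invariant trivialization with the jet-norm as actually defined on $J^k(G,G)$: a jet with source $x\in K$ may have to be read in a reference chart differing from $\ta_x$, and the two readings are related by jet composition with smooth chart-change jets. The bounds of \cref{lem:jet_composition} show that composing with a fixed smooth chart-change jet alters the norm by a factor controlled by the norm of that chart-change jet; covering $K$ by finitely many chart domains and taking $U$ small enough that $xy$ lies in a common domain with $x$ for all $x\in K$, these factors are uniformly bounded over $K$. The main obstacle is therefore not the analytic estimate, which is immediate from right-invariance, but the careful chart bookkeeping on $J^k(G,G)$ needed to turn the clean identity ``$\mu^y=\Id$ in right-invariant coordinates'' into a bound for the norm in the precise form stated, uniformly over the compact set $K$.
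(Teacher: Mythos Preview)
Your approach has a genuine gap, and it also differs substantially from the paper's argument.

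\textbf{The circularity.} Your key observation---that in the charts $\ta_x$ and $\ta_{xy}$ induced by the right-invariant local addition the map $\mu^y$ becomes the identity---is correct. The trouble is the ``chart bookkeeping'' you defer to the last paragraph. The target chart $\ta_{xy}$ depends on $y$, and converting back to any fixed reference chart $\psi$ near $e$ requires the chart change $\psi\o\ta_{xy}$. By right-invariance, $\ta_{xy}\o T_e\mu^{xy}=\mu^{xy}\o\ta_e$, so this chart change is, up to a linear identification, nothing other than $\psi\o\mu^{xy}\o\ta_e$---right translation by $xy$ expressed in the fixed chart. Bounding its $k$-jet at $0$ is therefore the same problem you started with, now with parameter $xy\in K\cdot U$ instead of $y\in U$. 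Your finite-cover argument does not help: even if $xy$ stays in the same chart domain as $x$, the set $\{xy:x\in K,\,y\in U\}$ is not compact (closed balls in infinite-dimensional Banach spaces are never compact), so continuity of jets in the base point does not give a uniform bound. The ``clean identity'' never gets converted into a bound in a fixed chart without invoking the very estimate you are proving.

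\textbf{The paper's route.} The paper does not use the local addition at all; this lemma holds for any Banach half-Lie group. The argument is a one-line application of convenient calculus: the bornology on $C^\infty(U,E)$ is generated by point evaluations, so to show that $\{\mu^y:y\in U\}$ is bounded in $C^\infty(U,E)$ it suffices to check that $\{\mu^y(x):y\in U\}=\{xy:y\in U\}\subseteq\mu(U,U)$ is bounded in $E$ for each $x\in U$, which follows from continuity of multiplication after shrinking $U$. Boundedness of the $k$-jets then comes for free from the bounded linearity of $j^k:C^\infty(U,E)\to C^\infty(U,J^k(E,E))$. This is the step your argument lacks: some uniform-boundedness principle that converts pointwise boundedness of $\{\mu^y\}$ into boundedness of all derivatives.
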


\begin{proof}
Let $U$ be the domain of some chart around $e \in G$.
Via this chart, $U$ can be identified with an open subset of the Banach space $E=T_eG$.
By shrinking $U$ if necessary, we achieve that $U$ is bounded in $E$.
Using the continuity of inversion and multiplication, we achieve additionally that $U=U\i$ and that $\mu(U,U)$ is bounded in $E$.
Then, for any $x \in U$, the set $\{\mu^y(x): y \in U\}$ is bounded in $E$.
As the bornology on $C^\infty(U,E)$ is generated by point evaluations, the set $\{\mu^y:y \in U\}$ is bounded in $C^\infty(U,E)$.
As jet prolongation is bounded linear, the set $\{j^k\mu^y:y \in U\}$ is bounded in $C^\infty(U,J^k(E,E))$.
In particular, the set $\{j^k_x\mu^y:x \in K,y\in U\}$ is bounded in $J^k(E,E)$.
This set coincides with $\{j^k_x\mu^{y\i}:x \in K,y\in U\}$ because $U=U\i$.
Boundedness in the normed space $J^k(E,E)$ can be expressed equivalently as in the statement of the lemma.
\end{proof}

\begin{definition}[Right-invariant $C^k$ vector fields]
\label{def:XCkG}
A vector field $X:G\to TG$ is called right-invariant if $X\o\mu^y=T_e\mu^y\o X$ for all $y \in G$.
For any $k \in \mathbb N$, let $\mathfrak X_{C^k}(G)^G$ denote the space of right-invariant $C^k$ vector fields on $G$, endowed with the compact-open topology of $C(G,J^k(TG))$.
\end{definition}

\begin{lemma}[Jets of right-invariant vector fields]
\label{lem:right-invariance}
For any $k \in \mathbb N$
and right-invariant $k$-times differentiable vector field $X$, the $k$-jet at $y \in G$ is uniquely determined by the $k$-jet at $e \in G$ as follows:
\begin{align}
j^k_yX
&=
(j^{k+1}_e\mu^y \odot X(e))\bullet j^k_eX \bullet j^k_y \mu^{y\i},
\end{align}
for some smooth binary operations $\bullet$ and $\odot$, which are defined in \cref{sec:jets}.
\end{lemma}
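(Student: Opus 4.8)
The plan is to derive the formula from the multiplicative form of right-invariance together with the chain-rule identities for jet composition $\bullet$ and jet evaluation $\odot$ recorded in \cref{lem:jet_composition,lem:jet_evaluation}. Since right translations satisfy $\mu^y\o\mu^g=\mu^{gy}$, a right-invariant vector field, characterised as in \cref{thm:Gk}.\ref{thm:Gk:b} by $X(g)=T_e\mu^g(X(e))$, obeys $X\o\mu^y=T\mu^y\o X$; as $\mu^y$ is a diffeomorphism with inverse $\mu^{y\i}$, this rearranges to the pointwise identity
\[
X = T\mu^y\o X\o\mu^{y\i}\colon G\to TG.
\]
Because $G$ is a right half-Lie group, $\mu^y$ and $\mu^{y\i}$ are smooth, hence so is $T\mu^y\colon TG\to TG$; since $X$ is assumed $C^k$, the right-hand side is a composite of the $C^k$ map $X$ with smooth maps, so every jet appearing below exists, and in particular $\mu^y$ has a well-defined $(k+1)$-jet at $e$.

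First I would take $k$-jets at $y$ on both sides. Writing $a=\mu^{y\i}$ with $a(y)=e$, then $b=X$ with $b(e)=X(e)$, and $c=T\mu^y$ with $c(X(e))=X(y)$ by right-invariance, the right-hand side is the composite $c\o b\o a$. Two applications of the defining property of $\bullet$, namely $j^k_{f(x)}g\bullet j^k_xf=j^k_x(g\o f)$, give
\[
j^k_y X = j^k_{X(e)}(T\mu^y)\bullet\big(j^k_eX\bullet j^k_y\mu^{y\i}\big),
\]
where the source and target conditions of the two fibered products are met precisely because $a(y)=e$ and $(b\o a)(y)=X(e)$. The operation $\bullet$ is associative: truncated polynomial composition discards monomials of degree exceeding $k$, and these cannot re-enter the $k$-jet under further composition with jets having no constant term; hence the inner parentheses may be dropped.

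It remains to rewrite the leading factor. Applying the defining property of $\odot$, $j^{k+1}_xf\odot\xi=j^{k}_\xi(Tf)$ for $\xi\in T_xU$, with $f=\mu^y$, $x=e$, and $\xi=X(e)\in T_eG$, yields exactly $j^{k+1}_e\mu^y\odot X(e)=j^k_{X(e)}(T\mu^y)$, and substituting gives the claimed identity. The uniqueness assertion is then immediate, since the right-hand side depends on $X$ only through $j^k_eX$ (whose zeroth component is $X(e)$), while $j^{k+1}_e\mu^y$ and $j^k_y\mu^{y\i}$ depend only on the group structure. I expect the only genuine obstacle to be bookkeeping rather than mathematical depth: one must match base points on the fibered products on which $\bullet$ and $\odot$ are defined, check associativity of the truncated composition, and track the fact that $\odot$ costs one order of differentiability—this is exactly why the $(k+1)$-jet of the smooth map $\mu^y$, and not merely its $k$-jet, must appear.
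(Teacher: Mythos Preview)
Your proof is correct and follows essentially the same approach as the paper: use right-invariance to write $X$ as the composite $T\mu^y\circ X\circ\mu^{y\i}$, apply the chain-rule identity for $\bullet$ twice, and then use the defining property of $\odot$ to convert $j^k_{X(e)}(T\mu^y)$ into $j^{k+1}_e\mu^y\odot X(e)$. The only cosmetic difference is that the paper starts by inserting $j^k_e\mu^y\bullet j^k_y\mu^{y\i}$ on the right of $j^k_yX$ and then invokes right-invariance inside the jet, whereas you rearrange the pointwise identity first; the two computations are line-by-line equivalent.
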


\begin{proof}
Thanks to the right-invariance of $X$, one has
\begin{align}
j^k_yX
&=
j^k_yX \bullet j^k_e \mu^y \bullet j^k_y \mu^{y\i}
=
j^k_e(X\o\mu^y)\bullet j^k_y \mu^{y\i}
\\&=
j^k_e(T\mu^y\o X)\bullet j^k_y \mu^{y\i}
=
j^k_{X(e)}T\mu^y\bullet j^k_eX \bullet j^k_y \mu^{y\i}
\\&=
(j^{k+1}_e\mu^y \odot X(e))\bullet j^k_eX \bullet j^k_y \mu^{y\i}.
\qedhere
\end{align}
\end{proof}

\begin{corollary}[Bound on jets of right-invariant vector fields]
\label{lem:bound}
Let $k \in \mathbb N$, and let $X$ and $Y$ be $k$-times differentiable right-invariant vector  fields on $G$.
Then, for any $y$ in the domain of some chart around $e \in G$, one has
\begin{align}
\|j^k_yX-j^k_yY\|
\leq
\|j^k_eX-j^k_eY\| p(\|j^k_eX\|,\|j^k_eY\|,\|j^{k+1}_e\mu^y\|,\|j^k_y\mu^{y\i}\|),
\end{align}
for some polynomial $p$ depending only on $k$, in the norm induced from $T_eG$ via the chart.
\end{corollary}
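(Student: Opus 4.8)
The plan is to deduce the bound directly from the explicit formula for $j^k_yX$ in \cref{lem:right-invariance} together with the polynomial bounds on the operations $\bullet$ and $\odot$ established in \cref{lem:jet_composition,lem:jet_evaluation}. Write $A = j^{k+1}_e\mu^y$, $P = j^k_y\mu^{y\i}$, and abbreviate $\xi = j^k_eX$, $\eta = j^k_eY$. Then by \cref{lem:right-invariance} we have $j^k_yX = (A\odot X(e))\bullet \xi\bullet P$ and $j^k_yY = (A\odot Y(e))\bullet \eta\bullet P$. Note that $X(e)$ and $Y(e)$ are recoverable from $\xi$ and $\eta$ as their first components (the target of the $0$-jet part), with $\|X(e)\|\le\|\xi\|$ and similarly for $Y$; so everything on the right-hand side is controlled by the four quantities appearing in the claimed polynomial $p$.

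First I would expand the difference by a telescoping argument:
\begin{align}
j^k_yX-j^k_yY
&=
\big((A\odot X(e))\bullet \xi - (A\odot Y(e))\bullet \eta\big)\bullet P,
\end{align}
and then split the inner difference further as
\begin{align}
(A\odot X(e))\bullet \xi - (A\odot Y(e))\bullet \eta
&=
\big((A\odot X(e)) - (A\odot Y(e))\big)\bullet \xi
+
(A\odot Y(e))\bullet(\xi-\eta).
\end{align}
Each of the two terms is now estimated by the second (Lipschitz-type) bounds of \cref{lem:jet_composition} and \cref{lem:jet_evaluation}: the bound on $\|\tilde\ta\bullet\tilde\si-\ta\bullet\si\|$ handles the composition differences in terms of $\|\tilde\ta-\ta\|$ and $\|\tilde\si-\si\|$ together with polynomial factors in the norms of the arguments, and the bound on $\|\tilde\si\odot\tilde\xi-\si\odot\xi\|$ handles $\|(A\odot X(e))-(A\odot Y(e))\| \le \|A\|\,\|X(e)-Y(e)\|\cdot(\dots) \le \|A\|\,\|\xi-\eta\|\cdot(\dots)$, where I use $\|X(e)-Y(e)\|\le\|\xi-\eta\|$. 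Iterating the composition estimate once more to peel off the outer $\bullet P$, one obtains that $\|j^k_yX-j^k_yY\|$ is bounded by $\|\xi-\eta\|$ times a finite sum of monomials in $\|\xi\|,\|\eta\|,\|A\|,\|P\|$; collecting these into a single polynomial $p$ depending only on $k$ (since the degrees in \cref{lem:jet_composition,lem:jet_evaluation} depend only on $k$) gives exactly the asserted inequality.

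The only mild subtlety — and the step I would be most careful about — is bookkeeping the fibered-product domains so that all the $\bullet$ and $\odot$ operations are actually defined: one needs $\al(A\odot X(e)) = \al(\xi)$, i.e. the source points match, which holds because all jets are taken along right-invariant objects anchored by $y$ and $e$ in a compatible way (this is precisely what makes the formula in \cref{lem:right-invariance} well-posed), and one needs $y$ to lie in a single chart domain around $e$ so that all norms are taken in the fixed norm induced from $T_eG$. Granting that, there is no real obstacle; the estimate is a mechanical combination of the two lemmas' second bounds, and the polynomial $p$ can be written down explicitly if desired, though for the applications only its existence and its dependence on $k$ alone matter.
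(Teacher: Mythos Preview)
Your approach is exactly the paper's: combine the formula from \cref{lem:right-invariance} with the Lipschitz-type bounds of \cref{lem:jet_composition,lem:jet_evaluation}. One presentational caveat: the telescoping equalities you write are not valid as algebraic identities, because jet composition $\bullet$ is \emph{not} linear in its right argument (the polynomial part $q\o p$ is nonlinear in $p$), and moreover the differences $(A\odot X(e))-(A\odot Y(e))$ and $\xi-\eta$ need not have the correct source/target to be composable. The remedy is simply to drop the algebraic splitting and apply the second inequality of \cref{lem:jet_composition} directly, first to the outer composition with $P$ (where $\tilde\si=\si=P$, so the $\|\tilde\si-\si\|$-term vanishes), and then to the inner composition with $\tilde\ta=A\odot X(e)$, $\ta=A\odot Y(e)$, $\tilde\si=\xi$, $\si=\eta$; the $\odot$-difference is then controlled by the second inequality of \cref{lem:jet_evaluation}. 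This yields precisely the polynomial bound you describe, and is what the paper's one-line proof intends.
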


\begin{proof}
This follows from the estimates of \cref{lem:jet_composition,lem:jet_evaluation} applied to the expression in \cref{lem:right-invariance}.
\end{proof}

\begin{lemma}[Right-invariant $C^k$ vector fields]
\label{lem:banach}
For any $k \in \mathbb N$, the space $\mathfrak X_{C^k}(G)^G$ is Banach with respect to the norm
\[
\|X\| := \|X(e)\|_{T_eG} + \dots + \|d^kX(e)\|_{L^{(k)}(T_eG,\dots,T_eG;T_eG)}.
\]
\end{lemma}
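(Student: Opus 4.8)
The plan is to show that $\|\cdot\|$ is a norm under which $\mathfrak X_{C^k}(G)^G$ is complete. Fix a chart centered at $e$ whose domain $U$ satisfies the conclusion of \cref{lem:multiplication_bounded}; shrinking $U$, assume $U=U\i$. Identifying $U$ with an open subset $V$ of $E=T_eG$ (with $e\mapsto 0$), write $\hat X\colon V\to E$ for the principal part of a right-invariant $C^k$ vector field $X$. Then $\|X\|$ is equivalent to $\|j^k_0\hat X\|$, the norm of the $k$-jet at $0$ introduced in \cref{def:jets}, and for $y\in V$ the quantity $\|j^k_yX-j^k_yY\|$ appearing in \cref{lem:bound} is equivalent to $\sum_{j=0}^k\|d^j\hat X(y)-d^j\hat Y(y)\|$. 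That $\|\cdot\|$ is a norm and not merely a seminorm — the one place right-invariance is needed here — follows since $\|X\|=0$ forces $X(e)=0$, whence $X(y)=T_e\mu^y(X(e))=0$ for all $y\in G$ by \cref{def:XCkG}, so $X=0$.

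Now let $(X_n)$ be a $\|\cdot\|$-Cauchy sequence and put $M:=\sup_n\|j^k_eX_n\|<\infty$. The heart of the proof is to upgrade this to a Cauchy estimate for the full jets that is uniform on compacta. Let $K\subseteq U$ be compact; enlarging it we may take $K=K\i$, as every compact subset of $U$ lies in such a set. For $y\in K$ one has $y\i\in K\subseteq U$, so applying \cref{lem:multiplication_bounded} with the compact set $\{e\}$ at level $k+1$ and with the compact set $K$ at level $k$ gives finite quantities $C_K:=\sup_{y\in K}\|j^{k+1}_e\mu^y\|$ and $C_K':=\sup_{y\in K}\|j^k_y\mu^{y\i}\|$. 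Since the polynomial $p$ furnished by \cref{lem:bound} has nonnegative coefficients, it is nondecreasing in each argument, and hence
\[
\sup_{y\in K}\|j^k_yX_n-j^k_yX_m\|\;\le\; p\big(M,M,C_K,C_K'\big)\,\|j^k_eX_n-j^k_eX_m\|\;\xrightarrow[n,m\to\infty]{}\;0.
\]
Therefore $(\hat X_n)$ is Cauchy in $C^k(V,E)$ with the compact-open $C^k$-topology, which is complete: all derivatives of $\hat X_n$ up to order $k$ converge uniformly on compacta, and integrating the top one along line segments shows the limit is again $C^k$ with the expected derivatives. Call the limit $\hat X\in C^k(V,E)$.

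It remains to promote $\hat X$ to a global right-invariant $C^k$ vector field. Put $X_0:=\hat X(0)=\lim_n X_n(e)\in T_eG$ and define $X\colon G\to TG$, $X(y):=T_e\mu^y(X_0)$; this is well defined since $\mu^y$ is smooth for every $y$, and right invariant because $\mu^{yz}=\mu^z\o\mu^y$ gives $X(wz)=T_e\mu^{wz}(X_0)=T_w\mu^z\big(X(w)\big)$. On $U$ the principal part of $X$ is $y\mapsto T_e\mu^y(X_0)=\lim_n T_e\mu^y(X_n(e))=\lim_n\hat X_n(y)=\hat X(y)$, since $T_e\mu^y$ is bounded linear; so $X$ is $C^k$ near $e$, and near an arbitrary $y_0$ it is $C^k$ because $X=T\mu^{y_0}\o X\o\mu^{y_0\i}$ is a composite of the smooth maps $\mu^{y_0\i},T\mu^{y_0}$ with a map that is $C^k$ near $e$. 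Thus $X\in\mathfrak X_{C^k}(G)^G$; and since $\{e\}$ is compact, $\|\cdot\|$ is dominated by the compact-open $C^k$-seminorm over $\{e\}$, along which $\hat X_n\to\hat X$, so $\|X_n-X\|\to0$. Hence $\big(\mathfrak X_{C^k}(G)^G,\|\cdot\|\big)$ is a Banach space.

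The main obstacle is the uniform-on-compacta estimate of the second paragraph: a prescribed $k$-jet at $e$ need not be the jet of any $C^k$ right-invariant vector field, so the limiting field can only be produced once its jets at all points are controlled simultaneously on compact sets — which is exactly what \cref{lem:multiplication_bounded} and \cref{lem:bound} supply. The one delicate point is the term $\|j^k_y\mu^{y\i}\|$, in which $y$ appears both as base point and, inverted, as parameter; this is why we pass to a symmetric compact $K$.
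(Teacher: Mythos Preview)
Your proof is correct and follows essentially the same route as the paper: both use \cref{lem:multiplication_bounded} and \cref{lem:bound} to upgrade a Cauchy estimate on $j^k_e$ to a uniform-on-compacta Cauchy estimate on $U$, and then pass to a global right-invariant limit via right translations. One point worth noting: the paper's proof additionally frames the argument as showing that the norm topology \emph{coincides} with the compact-open topology of \cref{def:XCkG} (norm convergence $\Rightarrow$ compact-open convergence on $U$, then on each $\mu^y(U)$, hence globally), which is needed when the lemma is invoked in \cref{lem:CkGGG_manifold} and \cref{lem:JkGGG}; your argument contains all the ingredients for this (the inequality in your second paragraph and the observation that $\|\cdot\|$ is dominated by the compact-open seminorm over $\{e\}$), but you do not state the conclusion explicitly.
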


\begin{proof}
The compact-open topology on $k$-jets dominates the norm topology.
To show that it coincides with the norm topology, let $(X_n)$ be a sequence which converges in norm to $X \in \mathfrak X_{C^k}(G)^G$.
By \cref{lem:multiplication_bounded}, there is a neighborhood $U$ of $e \in G$ where right translations are uniformly bounded on compacts in the sense detailed there.
It follows from \cref{lem:bound} that $j^kX_n$ converges to $j^kX$ uniformly on compacts in $U$.
This is equivalent to convergence in the compact-open topology on $C(U,J^k(G,G))$.
By using right translations, similarly to \cref{lem:right-invariance}, one obtains convergence in the compact-open topology on $C^k(\mu^y(U),J^k(G,G))$, for any $y \in G$.
As the sets $\mu^y(U)$ form an open cover of $G$, one obtains convergence in the compact-open topology on $C^k(G,J^k(G,G))$.
By definition, this means convergence in $\mathfrak X_{C^k}(G)$.
A similar argument shows that the norm topology is complete: for any Cauchy sequence $(X_n)$, it follows from \cref{lem:multiplication_bounded,lem:bound} that $\|j^kX_n-j^kX_m\|\to 0$ uniformly on compacts in $U$, which implies the existence of a limit $X \in \mathfrak X_{C^k}(G)$.
\end{proof}

\begin{lemma}[Banach manifold of right-invariant $C^k$ diffeomorphisms]
\label{lem:CkGGG_manifold}
Let $G$ carry a right-invariant local addition.
Then, for any $k \in \mathbb N$, the space $\Diff_{C^k}(G)^G$ is a Banach manifold with right-invariant local addition.
For any $g\in \Diff_{C^k}(G)^G$, the pull-back $g_*:\Diff_{C^k}(G)^G\ni f \mapsto f\o g \in \Diff_{C^k}(G)^G$ is smooth.
Moreover, the evaluation map $\on{ev}_e:\Diff_{C^k}(G)^G\to G$ is smooth.
\end{lemma}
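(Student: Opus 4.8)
The plan is to build the Banach manifold structure on $\Diff_{C^k}(G)^G$ by transporting the right-invariant local addition $\ta$ on $G$ to the space of right-invariant $C^k$ maps, using the identification of a right-invariant vector field with its $k$-jet at $e$ provided by \cref{lem:right-invariance} and \cref{lem:banach}. First I would fix the right-invariant local addition $\ta:TG\supseteq V\to G$, which I may assume normalized, and for each $g\in\Diff_{C^k}(G)^G$ define a candidate chart domain $\mathcal U_g$ consisting of those $f\in\Diff_{C^k}(G)^G$ with $(g(x),f(x))\in(\pi_G,\ta)(V)$ for all $x\in G$, together with the map $u_g(f):x\mapsto\big((\pi_G,\ta)\i(g(x),f(x))\big)\in T_{g(x)}G$. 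The key observation is that, because $\ta$ and $g$ are both right-invariant, the section $u_g(f)$ is a \emph{right-invariant} $C^k$ vector field along $g$; composing with the right-invariant trivialization $X\o\mu^y=T_e\mu^y\o X(e)$, equivalently using $u_g(f)\o\mu^y = T\mu^y\o u_g(f)$, this vector field is determined by its $k$-jet at $e$, i.e.\ by an element of the Banach space $\mathfrak X_{C^k}(G)^G$ of \cref{lem:banach} (after a fixed right translation by $g$, which is harmless). Thus $u_g$ takes values in an open subset of a fixed Banach space, and I would use these as the manifold charts.

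Next I would verify that the chart changes $u_{g_1}\o u_{g_0}\i$ are smooth maps between open subsets of $\mathfrak X_{C^k}(G)^G$. Working in a chart of $G$ around $e$ and using right-invariance to reduce everything to jets at $e$, the chart change is built from the smooth jet operations $\bullet$ and $\odot$ of \cref{sec:jets}, composed with the smooth map induced by $(\pi_G,\ta)\i$ and $\ta$, exactly as in \cref{lem:Ck_calculus}: push-forward by the smooth map $(\pi_G,\ta)$ and its inverse, and the bounded bilinear jet composition, are smooth on spaces of $C^k$ sections. The estimates of \cref{lem:jet_composition,lem:jet_evaluation}, together with the uniform boundedness of right translations on compacts from \cref{lem:multiplication_bounded} (which is exactly what makes the norm on $\mathfrak X_{C^k}(G)^G$ finite and the topology the compact-open one), guarantee that these operations send the relevant open sets into each other and depend smoothly on all arguments. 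That $\Diff_{C^k}(G)^G$ is Hausdorff follows since the compact-open topology on $C(G,J^k(G,G))$ is Hausdorff and the charts are homeomorphisms onto open subsets of a Banach space.

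Then I would check the three remaining assertions. For the right-invariant local addition on $\Diff_{C^k}(G)^G$: the map $\Theta$ sending a tangent vector at $f$ — which in the chart $u_f$ is an element $\xi\in\mathfrak X_{C^k}(G)^G$ — to the right-invariant $C^k$ map $x\mapsto\ta\big(\ta_{?}^{-1}\ldots\big)$, concretely $u_f\i$ applied to a small element, is a local addition by construction, and its right-invariance under the right action $f\mapsto f\o g$ follows because $\ta$ is right-invariant and $u_f$ intertwines right translation with the linear right action on jets; one must only confirm that $(\pi,\Theta)$ is a diffeomorphism onto its image, which is immediate from the corresponding property of $\ta$ applied fiberwise. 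For smoothness of $g_*:f\mapsto f\o g$: in the charts $u_f$ and $u_{f\o g}$, this map is the identity, since $(\pi_G,\ta)\i(fg(x),\varphi g(x)) = u_f(\varphi)(g(x))$ and $g$ is right-invariant so precomposition with $g$ acts on jets at $e$ trivially after the built-in right translation; hence $g_*$ is smooth (indeed affine in the chart). Finally, smoothness of $\on{ev}_e$ follows because in the chart $u_g$ around $g$ it is the map $\xi\mapsto\ta\big((T_eG\ni)\ \xi_{\text{at }e}\text{ read off at }g(e)\big)$, i.e.\ the composition of the bounded linear evaluation $\mathfrak X_{C^k}(G)^G\to T_eG$, $\xi\mapsto\xi(e)$, with the right translation by $g$ and the smooth map $\ta$; all of these are smooth.

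The main obstacle I expect is the smoothness of the chart changes: one must track carefully how right-invariance lets every quantity be encoded by a jet at $e\in G$, and then assemble the chart change out of the jet operations $\bullet$, $\odot$, jet inversion, and push-forward/pull-back by the fixed smooth maps coming from $\ta$, invoking \cref{lem:jet_composition,lem:jet_evaluation,lem:jet_inversion,lem:Ck_calculus} to get both the mapping properties (open sets into open sets, using \cref{lem:multiplication_bounded}) and the actual $C^\infty$-smoothness. Once that bookkeeping is done cleanly, the local addition, the smoothness of $g_*$, and the smoothness of $\on{ev}_e$ are all short consequences of the explicit form of the charts.
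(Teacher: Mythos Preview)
Your plan is correct and follows essentially the same route as the paper: charts $u_g$ built from the right-invariant local addition $\ta$, taking values in the Banach space $\mathfrak X_{C^k}(G)^G$ of \cref{lem:banach}; the observation that $g_*$ is the identity in the charts $u_f$, $u_{f\o g}$; and the evaluation map read off in a chart as $\ta$ applied to a point-evaluation of the vector field.

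The one place where you make your life harder than necessary is the chart-change smoothness. You propose to track jets at $e$ explicitly and assemble the transition map out of the operations $\bullet$, $\odot$, jet inversion, and the estimates of \cref{lem:jet_composition,lem:jet_evaluation,lem:multiplication_bounded}. That would work, but it is overkill here. The paper simply writes down the chart change as
\[
u_f\o u_g\i:\; X\longmapsto (\ta,\pi)\i\o(\ta\o X\o g\o f\i,\Id_G),
\]
which is a pull-back by the fixed $C^k$ map $g\o f\i$ followed by push-forwards by the fixed \emph{smooth} maps $\ta$ and $(\ta,\pi)\i$. Smoothness then follows in one line from \cref{lem:Ck_calculus}\ref{lem:Ck_calculus:1}--\ref{lem:Ck_calculus:2}, with no need to unpack jets. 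The jet machinery $\bullet$, $\odot$ is used elsewhere (for \cref{lem:right-invariance}, \cref{lem:JkGGG}, \cref{lem:Ckl}) but is not needed for this lemma. Once you adopt this simplification, the remaining assertions (local addition, smoothness of $g_*$ and of $\on{ev}_e$) drop out exactly as you indicate.
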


\begin{proof}
A right-invariant $C^k$ function $f:G\to G$ belongs to $\Diff_{C^k}(G)^G$ if and only if $T_ef$ is continuously invertible.
Indeed, by right-invariance, this implies invertibility of $T_xf$ at all $x \in G$, and by the inverse function theorem, this implies that $f\i$ is $C^k$.
Thus, $\Diff_{C^k}(G)^G$ is an open subset of the set $C^k(G,G)^G$ of right-invariant $C^k$ functions, endowed with the compact-open topology of $C(G,J^k(G,G))$.
We next show that the set $C^k(G,G)^G$ of right-invariant $C^k$ functions is a Banach manifold.
Let $\ta:TG\supseteq U\to G$ denote the local addition on $G$, let $\pi:TG\to G$ be the canonical projection.
Recall from \cref{lem:banach} that the space $\mathfrak X_{C^k}(G)^G$ of right-invariant $C^k$ vector fields is Banach, and define the open subset
\[
V = \{X \in \mathfrak X_{C^k}(G)^G: X(e) \in U\}.
\]
For any $f \in C^k(G,G)^G$ and $X \in V$, the function $\ta \o X \o f$ is right-invariant and $C^k$ as a composition of $C^k$ functions.
Thus, the following map is well-defined:
\[
v_f: V\to C^k(G,G)^G,
\qquad
X\mapsto \ta\o X\o f.
\]
It is injective because right-invariant functions are uniquely determined by their value at the identity.
Let $U_f=v_f(V)$.
Then $U_f$ consists of all $h \in C^k(G,G)^G$ such that $(h\o f\i(e),e)\in(\ta,\pi)(U)$.
As $(\ta,\pi)(U)$ is an open subset of $G\times G$, it follows that $U_f$ is open.
The chart around $f$ is defined as $u_f=v_f\i:U_f\to V$, which is a homeomorphism.
For any $g \in C^k(G,G)^G$, the set $u_f(U_f\cap U_g)$ consists of all $X \in V$ such that $(\ta\o X\o f\o g\i(e),e)$ is contained in $(\ta,\pi)(U)$.
Thus, $u_f(U_f\cap U_g)$ is open in $V$.
By symmetry, $u_g(U_f\cap U_g)$ is open, too.
The chart change is given by
\[
u_f\o u_g\i:
\left\{\begin{aligned}
u_g(U_f\cap U_g) &\to u_f(U_g\cap U_g),
\\
X &\mapsto (\ta,\pi)\i \o (\ta\o X\o g\o f\i,\Id_G).
\end{aligned}\right.
\]
The right-hand side is a $C^k$ vector field, which depends smoothly on the $C^k$ vector field $X$ by \cref{lem:Ck_calculus}.
Thus, the chart changes are smooth, and we have shown that $C^k(G,G)^G$ is a smooth Banach manifold.
We define a right-invariant local addition on $C^k(G,G)^G$ by mapping any $X \in T\Diff_{C^k}(G)^G$ with $X(e) \in U$ to $\tau\circ X \in \Diff_{C^k}(G)^G$; one easily verifies that this has the desired properties.
In charts $u_f$ and $u_{g\o f}$, right-composition by $g \in C^k(G,G)^G$ is the identity on $V$,
\begin{align}
u_{f\o g}(\mu^g(u_f(X)))
=
u_{f\o g}(\mu^g(\ta\o X\o f))
=
u_{f\o g}(\ta\o X\o f \o g)
=
X,
\end{align}
and is therefore smooth.
Moreover, the evaluation map is smooth, as can be seen in the chart $u_f$:
\begin{equation}
\on{ev}_e\o u_f\i(X) = \ta\o X\o f(e).
\end{equation}
This concludes the proof.
\end{proof}

\begin{lemma}[Topological group of right-invariant $C^k$ diffeomorphisms]
\label{lem:CkGGG_topological_group}
Let $G$ carry a right-invariant local addition.
Then, for any $k \in \mathbb N$, the space $\Diff_{C^k}(G)^G$ is a topological group.
\end{lemma}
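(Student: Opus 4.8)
The plan is to show that composition and inversion are continuous on $\Diff_{C^k}(G)^G$ by reducing everything to the identity element, exploiting right-invariance together with the explicit jet formula of \cref{lem:right-invariance} and the jet bounds of \cref{lem:jet_composition,lem:jet_evaluation,lem:jet_inversion}. The key point is that a right-invariant $C^k$ diffeomorphism $f$ is determined by its value $f(e)$, and more quantitatively, by \cref{lem:right-invariance} and \cref{lem:bound}, the $k$-jet $j^k_yf$ at any point $y$ in a fixed chart neighborhood $U$ of $e$ is controlled, locally uniformly in $y$, by the $k$-jet $j^k_ef$ at the identity, using the uniform bounds on $j^{k+1}_e\mu^y$ and $j^k_y\mu^{y\i}$ from \cref{lem:multiplication_bounded}. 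Thus convergence in $\Diff_{C^k}(G)^G$ (i.e.\ in the compact-open topology on $C(G,J^k(G,G))$) is, on the neighborhood $U$, equivalent to convergence of the $k$-jets at $e$; the same then propagates to all of $G$ by covering $G$ with the translates $\mu^y(U)$.

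First I would treat continuity of composition $(f,g)\mapsto f\o g$ at a pair $(f_0,g_0)$. Write $h=f\o g$; since all maps are right-invariant, $h(e)=f(g_0(e)\cdot g_0(e)\i g(e))$, so it suffices to understand $j^k_e(f\o g)=j^k_{g(e)}f\bullet j^k_eg$. I would compute $j^k_{g(e)}f$ using \cref{lem:right-invariance}: $j^k_{g(e)}f = (j^{k+1}_e\mu^{f_0(e)\i\cdot(\dots)}\odot\cdots)\bullet j^k_ef\bullet(\cdots)$ — more simply, $j^k_{g(e)}f = j^k_e f \bullet j^k_{g(e)}\mu^{g(e)}$ expressed via the right-translation jets at $e$ composed with jets at $g(e)$, which by \cref{lem:multiplication_bounded} stay bounded as $g(e)$ ranges over a small neighborhood of $g_0(e)$. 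Combining with the bilinear estimate for $\bullet$ in \cref{lem:jet_composition}, one gets $\|j^k_e(f\o g)-j^k_e(f_0\o g_0)\|$ bounded by a constant times $\|j^k_ef-j^k_ef_0\|+\|j^k_eg-j^k_eg_0\|$, locally. Since convergence at $e$ controls convergence on all of $U$ (by \cref{lem:bound}) and hence, by right translation, on all of $G$, this gives continuity of composition. Continuity of inversion $f\mapsto f\i$ is handled the same way: $f\i$ is right-invariant and $C^k$ (by the inverse function theorem argument in \cref{lem:CkGGG_manifold}), $j^k_{f(e)}(f\i)=(j^k_ef)^{-1}$ by \cref{lem:jet_inversion}, which is continuous; translating $(j^k_ef)^{-1}$ from the point $f(e)$ back to $e$ again uses \cref{lem:right-invariance} with the locally bounded right-translation jets, so $f\mapsto j^k_e(f\i)$ is continuous, and this propagates to $G$ as before.

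The main obstacle is the bookkeeping needed to go \emph{between basepoints}: the jets $j^k_ef$ and $j^k_{g(e)}f$ live over different points of $G$, and the formula relating them involves the right-translation jets $j^{k+1}_e\mu^{y}$, $j^k_y\mu^{y\i}$, whose uniform boundedness over $y$ in a neighborhood is exactly \cref{lem:multiplication_bounded}, while their \emph{continuous} dependence on $y$ (needed because $g(e)$ moves with $g$) must be extracted from the fact that $\mu:G\times G\to G$ is continuous and right translations are smooth, so $y\mapsto j^k_y\mu^{y\i}$ is continuous into $J^k(G,G)$ on the chart neighborhood. Once this is in place, all estimates are the routine polynomial bounds from \cref{sec:jets}. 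I would also remark, to close the lemma, that together with \cref{lem:CkGGG_manifold} this completes the proof of \cref{thm:CkGGG}: $\Diff_{C^k}(G)^G$ is a Banach manifold, a topological group, right translations are smooth (shown in \cref{lem:CkGGG_manifold}), so it is a Banach half-Lie group with right-invariant local addition.
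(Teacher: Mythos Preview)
Your reduction-to-the-identity strategy has a genuine gap at exactly the point you flagged as the ``main obstacle'': you need the map
\[
y \;\longmapsto\; j^k_y\mu^{y^{-1}}
\quad\text{and}\quad
y \;\longmapsto\; j^{k+1}_e\mu^{y}
\]
to depend \emph{continuously} on $y$, and you assert this ``must be extracted from the fact that $\mu:G\times G\to G$ is continuous and right translations are smooth''. But it does not follow. Continuity of $\mu$ together with smoothness of each $\mu^y$ separately does not force the $x$-derivatives of $\mu(x,y)$ to be continuous in $y$; already for $k=1$ this would say that $y\mapsto T_e\mu^y$ is continuous, i.e.\ that the right trivialization of $TG$ is continuous. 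The paper explicitly warns in the introduction that on a half-Lie group ``the canonical trivialization of the tangent bundle \dots\ may be non-smooth and even discontinuous''. \Cref{lem:multiplication_bounded} gives you uniform \emph{boundedness} of these jets, not continuity, and your bilinear estimate for~$\bullet$ from \cref{lem:jet_composition} needs the latter once the basepoint $g(e)$ moves.

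The paper avoids this issue entirely. For composition it does not reduce to the identity at all: it simply invokes \cref{lem:Ck_calculus}.\ref{lem:Ck_calculus:4}, sequential continuity of $(f,g)\mapsto g\circ f$ on general $C^k$ spaces, and uses that $\Diff_{C^k}(G)^G$ is metrizable (\cref{lem:banach}) so sequential continuity suffices. For inversion it first handles $k=0$ using only $f_n^{-1}(e)=f_n(e)^{-1}$, continuity of inversion in $G$, and the uniform \emph{Lipschitz} bound on right translations coming from \cref{lem:multiplication_bounded}; for $k\geq 1$ it then works on an arbitrary compact $K$, uses the $C^0$ case to get the compact set $L=f^{-1}(K)\cup\bigcup_n f_n^{-1}(K)$, observes that $j^k_{f_n^{-1}}f_n\to j^k_{f^{-1}}f$ uniformly on $K$ because $j^kf_n\to j^kf$ uniformly on $L$ and $j^kf$ is uniformly continuous there, and finally applies jet inversion (\cref{lem:jet_inversion}). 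At no point does this require continuity of $y\mapsto j^k_y\mu^{y^{-1}}$: right-invariance is used only to transport convergence from one chart neighborhood to all of $G$, which needs only the boundedness in \cref{lem:multiplication_bounded}.
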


\begin{proof}
Composition $\Diff_{C^k}(G)^G \times \Diff_{C^k}(G)^G \to \Diff_{C^k}(G)^G$ is sequentially continuous by \cref{lem:Ck_calculus}.\ref{lem:Ck_calculus:4}.
Sequential continuity implies continuity because $\Diff_{C^k}(G)^G$ is locally homeomorphic to the metrizable space $\mathfrak X_{C^k}(G)^G$, as shown in \cref{lem:banach}. 

We claim that inversion $\Diff_{C^0}(G)^G\to \Diff_{C^0}(G)^G$ is sequentially continuous (and thus continuous) in the compact-open topology.
Let $f_n\to f$ be a converging sequence in $\Diff_{C^0}(G)^G$.
Then, $f_n(e)\to f(e)$ and $f_n(e)\i\to f(e)\i$ thanks to the continuity of the inversion $G\to G$.
By right-invariance, $f\i(e)=f(e)\i$ and $f_n\i(e)=f_n(e)\i$.
Thus, $f_n\i(e)\to f\i(e)$.
By \cref{lem:multiplication_bounded}, right-translations are uniformly bounded on compacts in $U$, for some sufficiently small chart domain $U$ of $e \in G$.
Thus, $f_n\i\to f\i$ uniformly on compacts in $U$.
Equivalently, $f_n\i \to f\i$ in the compact-open topology on $U$.
By right translations, $f_n\i \to f\i$ in the compact-open topology on $\mu^y(U)$, for any $y \in G$.
As the sets $\mu^y(U)$ form an open cover of $G$, this implies convergence in the compact-open topology on all of $G$.
This proves the claim.

We claim that inversion $\Diff_{C^k}(G)^G\to \Diff_{C^k}(G)^G$ is sequentially continuous (and thus continuous).
Let $f_n\to f$ be a converging sequence in $\Diff_{C^k}(G)^G$, and let $K$ be a compact subset of $G$.
As $f_n\i\to f\i$ in $\Diff_{C^0}(G)^G$, the set $L=f\i(K)\cup\bigcup_n f_n\i(K)$ is compact.
We assume that $K$ and $L$ are contained in chart domains.
This assumption is without loss of generality because any given compact set is a finite union of compact sets such that the desired property holds for sufficiently large $n$.
In the charts, $f_n\i$ converges to $f\i$ uniformly on $K$, and $j^kf_n$ converges to $j^kf$ uniformly on $L$.
Then, $j^k_{f_n\i}f_n$ converges to $j^k_{f\i}f$ uniformly on $K$ because
\begin{equation}
\|j^k_{f_n\i(x)}f_n-j^k_{f\i(x)}f\|
\leq
\|j^k_{f_n\i(x)}f_n-j^k_{f_n\i(x)}f\|+\|j^k_{f_n\i(x)}f-j^k_{f\i(x)}f\|
\end{equation}
where the first summand tends to zero uniformly in $x \in K$ because $j^kf_n$ tends to $j^kf$ uniformly on $L$, and the second summand tends to zero uniformly in $x \in K$ because $j^kf$ is uniformly continuous on $L$.
By \cref{lem:jet_inversion} jet inversion is continuous, and thus uniformly continuous on compacts, and consequently $(j^k_{f_n\i}f_n)\i$ converges to $(j^k_{f\i}f)\i$ uniformly on $K$.
Equivalently, by \cref{def:jet_inversion} of jet inversion, $j^k(f_n\i)$ converges to $j^k(f\i)$ uniformly on $K$.
This proves the claim.
\end{proof}

\begin{lemma}[Banach manifold of right-invariant $k$-jets]
\label{lem:JkGGG}
Let $G$ carry a right-invariant local addition.
Then, the set
\begin{equation}
J^k_e(G,G)^G = \big\{j^k_ef: f \in \Diff_{C^k}(G)^G\big\}
\end{equation}
is a submanifold of $J^k_e(G,G)$, and $\Diff_{C^k}(G)^G$ is diffeomorphic to $J^k_e(G,G)^G$ via the map
\begin{equation}
j^k_e:\Diff_{C^k}(G)^G \ni f \mapsto j^k_e f \in J^k_e(G,G)^G.
\end{equation}
\end{lemma}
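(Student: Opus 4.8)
The plan is to show that $j^k_e$ restricted to right-invariant $C^k$ diffeomorphisms is a diffeomorphism onto its image, using the charts constructed in \cref{lem:CkGGG_manifold}. First I would recall that a right-invariant $C^k$ function $f:G\to G$ is uniquely determined by its value $f(e)\in G$, since $f(y)=f(\mu^y(e))=\mu^y(f(e))$ for all $y\in G$. More is true: by \cref{lem:right-invariance}, the full $k$-jet $j^k_yf$ at any point $y$ is determined by $j^k_ef$ together with the (smooth) jets of right translations, via the formula $j^k_yf = (j^{k+1}_e\mu^y\odot f(e))\bullet j^k_ef\bullet j^k_y\mu^{y\i}$. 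Since a right-invariant diffeomorphism is determined by its $k$-jet at $e$ — indeed already by its value $f(e)$ — the map $j^k_e$ is injective on $\Diff_{C^k}(G)^G$, and its image is exactly the set $J^k_e(G,G)^G$ by definition.

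Next I would identify the image $J^k_e(G,G)^G$ inside the Banach manifold $J^k_e(G,G)$. The manifold charts for $\Diff_{C^k}(G)^G$ are built from the Banach space $\mathfrak X_{C^k}(G)^G$ of right-invariant $C^k$ vector fields (\cref{lem:banach}), via $u_f\i(X)=\ta\o X\o f$. Now $j^k_e$ composed with the inverse chart $u_f\i$ sends $X\mapsto j^k_e(\ta\o X\o f)$. Using the evaluation map $\mathfrak X_{C^k}(G)^G\ni X\mapsto j^k_eX\in J^k_e(TG)$ (which is an isomorphism onto a closed linear subspace, as in the discussion of right-invariant vector fields), this composition is a smooth map between (open subsets of) Banach spaces, with image an open subset of a closed linear submanifold of $J^k_e(G,G)$. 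Concretely: $j^k_e(\ta\o X\o f)=j^k_{X(f(e))}\ta\bullet j^k_{f(e)}X\bullet j^k_ef$, where the dependence on $X$ enters only through $j^k_{f(e)}X$, which by right-invariance equals $(j^{k+1}_e\mu^{f(e)}\odot X(e))\bullet j^k_eX\bullet j^k_{f(e)}\mu^{f(e)\i}$, a smooth function of $j^k_eX\in J^k_e(TG)^G$. All the operations $\bullet$, $\odot$ and jet evaluation are smooth by \cref{lem:jet_composition,lem:jet_evaluation}, so this is a smooth map; its inverse, reconstructing $X$ from $j^k_e(\ta\o X\o f)$, is likewise assembled from smooth jet operations and the smooth local inverse of $(\pi,\ta)$. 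Hence in these charts $j^k_e$ is a diffeomorphism onto an open subset of the closed linear subspace $J^k_e(G,G)^G\subseteq J^k_e(G,G)$, which shows that $J^k_e(G,G)^G$ is a submanifold and that $j^k_e:\Diff_{C^k}(G)^G\to J^k_e(G,G)^G$ is a diffeomorphism.

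I expect the main obstacle to be bookkeeping rather than conceptual: one must verify carefully that the subspace of $J^k_e(G,G)$ cut out by the right-invariance condition is \emph{closed} and \emph{complemented} (so that it is genuinely a split Banach submanifold in the sense used throughout the paper), and that the chart changes of $J^k_e(G,G)^G$ inherited from $J^k_e(G,G)$ agree with those induced from $\Diff_{C^k}(G)^G$. The closedness and complementedness follow from the corresponding statement for $\mathfrak X_{C^k}(G)^G$ in \cref{lem:banach} together with the fact that the local addition $\ta$ identifies a neighborhood of the zero vector field with a neighborhood of $\Id\in\Diff_{C^k}(G)^G$ linearly at first order; the compatibility of chart changes is then automatic since both are expressed through the same jet-composition operations. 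The rest is an application of the smoothness of $\bullet$, $\odot$, and jet inversion established in \cref{sec:jets}.
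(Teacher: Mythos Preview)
Your proposal is correct and follows essentially the same route as the paper. The only organizational difference is that the paper writes down an explicit submanifold chart $u^k_f:\rho\mapsto j^k(\ta,\pi)\i\bullet(\rho,j^k_ef)\bullet j^k_{f(e)}f\i$ on an open set of the \emph{ambient} space $J^k_e(G,G)$, checks that it has smooth inverse $\eta\mapsto j^k\ta\bullet\eta\bullet j^k_ef$, and then observes that it carries right-invariant jets to the closed subspace $J^k_{f(e)}(TG)^G$; you instead work from the $\Diff_{C^k}(G)^G$ side via $j^k_e\o u_f\i$, which is exactly the restriction of $(u^k_f)\i$ to right-invariant jets. Both arguments rest on the same three ingredients: the charts of \cref{lem:CkGGG_manifold}, the identification $\mathfrak X_{C^k}(G)^G\cong J^k_e(TG)^G$ from \cref{lem:banach}, and the smoothness of the jet operations from \cref{sec:jets}. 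Your concern about complementedness is legitimate, and the paper's proof is equally silent on this point; it only invokes closedness via \cref{lem:banach}.
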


\begin{proof}
We will construct a submanifold chart around any given $\sigma=j^k_ef \in J^k_e(G,G)^G$.
Let $U$ be the domain of the right-invariant local addition $\tau:TG\supseteq U \to G$,
let $U^k_f$ be the open set of all $\rho=(e,x,p) \in J^k_e(G,G)$ such that $(x,f(e))\in(\ta,\pi)(U)$,
and let $V^k_f$ be the open set of all $\eta=(f(e),X,q) \in J^k_{f(e)}(TG)$ such that $X \in U$.
Then, the function
\begin{equation}
u^k_f:U^k_f \ni \rho \mapsto j^k(\ta,\pi)\i \bullet (\rho,j^k_ef) \bullet j^k_{f(e)} f\i \in V^k_f
\end{equation}
is smooth with smooth inverse
\begin{equation}
(u^k_f)\i: V^k_f \ni \eta \mapsto j^k\ta \bullet \eta \bullet j^k_ef \in U^k_f.
\end{equation}
Moreover, $u^k_f$ restricts to right-invariant jets as follows:
\begin{equation}
u^k_f:U^k_f\cap J^k_e(G,G)^G \to V^k_f\cap J^k_{f(e)}(TG)^G.
\end{equation}
By \cref{lem:banach}, $\mathfrak X_{C^k}(G)^G$ is a closed linear subspace of $\mathfrak X_{C^k}(G)$, and as these spaces are isomorphic to jet spaces, $J^k_{f(e)}(TG)^G$ is a closed linear subspace of $J^k_{f(e)}(TG)$.
Thus, $u^k_f$ is a submanifold chart around $j^k_ef\in J^k_e(G,G)^G$, and we have shown that $J^k_e(G,G)^G$ is a submanifold of $J^k_e(G,G)$.

It remains to show that this submanifold is diffeomorphic to $\Diff_{C^k}(G)^G$.
In the chart for $\Diff_{C^k}(G)^G$ constructed in \cref{lem:CkGGG_manifold},
\begin{align}
u_f:C^k(G,G)^G \supseteq U_f&\to V_f \subseteq \mathfrak X_{C^k}(G)^G,
\end{align}
the map $j^k_e$ is the identification of right-invariant vector fields with their $k$-jets at $e \in G$:
\begin{equation}
u^k_f\o j^k_e\o u_f^{-1}: V_f \ni X \mapsto j^k_{f(e)}X \in V^k_f \cap J^k_{f(e)}(TG)^G.
\end{equation}
This is a continuous linear map with continuous inverse by \cref{lem:banach}.
\end{proof}

\begin{lemma}[Push-forwards]
\label{lem:Ckl}
Let $G$ carry a right-invariant local addition.
For any $k,\ell \in \mathbb N$ and $f \in C^{k+\ell}(G,G)^G$, the push-forward along $f$ is a $C^\ell$ function
\begin{equation}
\Diff_{C^k}(G)^G  \ni g \mapsto f\o g \in \Diff_{C^k}(G)^G.
\end{equation}
\end{lemma}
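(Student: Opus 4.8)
The plan is to write down a local representative of the push-forward $P_f\colon g\mapsto f\o g$ in the atlas for $\Diff_{C^k}(G)^G$ constructed in \cref{lem:CkGGG_manifold}, and to recognise it as a composition of pre- and post-compositions whose regularity is pinned down by \cref{lem:Ck_calculus}. First note that, by right-invariance, $f=\mu_{f(e)}$ is a diffeomorphism of $G$ with right-invariant $C^{k+\ell}$ inverse $f\i=\mu_{f(e)\i}$; in particular $f\o g$ is a right-invariant $C^k$ diffeomorphism whenever $g$ is, so $P_f$ does map $\Diff_{C^k}(G)^G$ into itself, and it suffices to prove that $P_f$ is $C^\ell$ near an arbitrary $g_0\in\Diff_{C^k}(G)^G$.

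Recall from \cref{lem:CkGGG_manifold} that for $h\in\Diff_{C^k}(G)^G$ the chart $u_h\colon U_h\to V\subseteq\mathfrak X_{C^k}(G)^G$ satisfies $u_h\i(X)=\ta\o X\o h$ and $u_h(\ph)=(\ta,\pi)\i\o(\ph\o h\i,\Id_G)$, with $\ta$ the given right-invariant local addition and $\pi\colon TG\to G$ its projection; here composing right-invariant maps with the diffeomorphisms $h,h\i$ again produces right-invariant maps. Using the charts $u_{g_0}$ around $g_0$ and $u_{f\o g_0}$ around $f\o g_0$, together with the cancellation $g_0\o(f\o g_0)\i=g_0\o g_0\i\o f\i=f\i$, the local representative of $P_f$ collapses to the $g_0$-independent map
\[
u_{f\o g_0}\o P_f\o u_{g_0}\i\colon\ X\ \longmapsto\ (\ta,\pi)\i\o\bigl(f\o\ta\o X\o f\i,\ \Id_G\bigr),
\]
defined on the open set of $X\in V$ for which the right-hand side is defined. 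Reading this from the inside out, it is the composition of: precomposition $X\mapsto X\o f\i$ with the fixed $C^{k+\ell}$ (hence $C^k$) map $f\i$, which is smooth by \cref{lem:Ck_calculus}.\ref{lem:Ck_calculus:1}; post-composition with the smooth map $\ta$, smooth by \cref{lem:Ck_calculus}.\ref{lem:Ck_calculus:2}; post-composition with $f\in C^{k+\ell}(G,G)^G$, which is $C^\ell$ by \cref{lem:Ck_calculus}.\ref{lem:Ck_calculus:3}; and, finally, pairing with the constant $\Id_G$ followed by post-composition with the smooth map $(\ta,\pi)\i$, which is smooth by \cref{lem:Ck_calculus}.\ref{lem:Ck_calculus:2}. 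A composition of a $C^\ell$ map with smooth maps is $C^\ell$, so $P_f$ is $C^\ell$ in a chart around every $g_0$, and therefore $C^\ell$.

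I expect the only real work to be bookkeeping: verifying that the intermediate objects $X\o f\i$, $\ta\o X\o f\i$, $f\o\ta\o X\o f\i$ are again right-invariant and lie in the relevant chart domains, and — exactly as in the proof of \cref{lem:CkGGG_manifold} — that \cref{lem:Ck_calculus}, which is stated for spaces $C^k(U,V)$ of maps between open subsets of Banach spaces, applies verbatim to the Banach space $\mathfrak X_{C^k}(G)^G$ of right-invariant $C^k$ vector fields on $G$. No idea beyond \cref{lem:CkGGG_manifold,lem:Ck_calculus} is needed; the single sharp point is that the step $X\mapsto f\o(\cdots)$ is a push-forward along the merely $C^{k+\ell}$ map $f$, which is exactly why one obtains $C^\ell$ rather than smoothness.
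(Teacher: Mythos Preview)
Your approach is correct and takes a genuinely different, more elementary route than the paper. The paper does not work in the charts of \cref{lem:CkGGG_manifold}; instead it passes through the jet description of \cref{lem:JkGGG}, identifying $\Diff_{C^k}(G)^G$ with $J^k_e(G,G)^G$ so that the push-forward becomes $\si\mapsto j^k_{\be(\si)}f\bullet\si$. It then shows this is Gateaux-$C^\ell$ by commuting the functors $T^\ell$ and $J^k_e(G,\cdot)$ via the canonical flip, and upgrades to Fr\'echet-$C^\ell$ using the explicit uniform estimates on jet composition from \cref{lem:jet_composition}. Your argument bypasses \cref{lem:JkGGG} entirely: the cancellation $g_0\o(f\o g_0)\i=f\i$ makes the chart representative $g_0$-independent, and you read off the regularity directly from \cref{lem:Ck_calculus}.\ref{lem:Ck_calculus:3}, exactly parallel to how the chart changes in \cref{lem:CkGGG_manifold} were handled. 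What you gain is brevity and an argument that stays at the level of the manifold atlas; what the paper's route gains is making explicit where the Fr\'echet (rather than mere Gateaux) regularity actually comes from, namely the uniform boundedness of jet composition on sets with compact base. Both approaches rely on the same implicit point you flag---that the conclusions of \cref{lem:Ck_calculus}, stated for $C^k(U,V)$ with $U,V$ open in Banach spaces, transfer to the right-invariant function spaces on $G$---and both leave it at the level of ``as in \cref{lem:CkGGG_manifold}''.
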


\begin{proof}
By \cref{lem:JkGGG}, $\Diff_{C^k}(G)^G$ is diffeomorphic to $J^k_e(G,G)$ via the identification of right-invariant functions with their $k$-jets at $e \in G$.
Thus, one has to show that the function
\begin{equation}
J^k_e(G,f):J^k_e(G,G)^G\ni \si \mapsto j^k_{\be(\si)}f\bullet\si \in J^k_e(G,G)^G
\end{equation}
is $C^\ell$.
Here, $J^k_e(G,\cdot)$ is viewed as a covariant functor, and $J^k_e(G,f)$ is the functor applied to the morphism $f$.

We claim that $J^k_e(G,f)$ is Gateaux-$C^\ell$, i.e., that the map
\begin{equation}
\label{equ:TlJkGf}
T^\ell J^k_e(G,f):T^\ell J^k_e(G,G)^G\to T^\ell J^k_e(G,G)^G
\end{equation}
is continuous.
The functors $T^\ell$ and $J^k_e(G,\cdot)$ commute up to a natural isomorphism $\kappa$, which is called canonical flip.
Abstractly, at least in finite dimensions, this is a consequence of these functors being product-preserving \cite[Sections~36--37]{kolar2013natural}, but it can also be verified easily by hand.
Thus, the claim is equivalent to the continuity of
\begin{equation}
\label{equ:JkGTlf}
J^k_e(G,T^\ell f)=\kappa\i\o T^\ell J^k_e(G,f)\o\kappa: J^k_e(G,T^\ell G)^G\ni \sigma\mapsto j^k_{\be(\si)}T^\ell f\bullet \sigma \in J^k_e(G,T^\ell G)^G.
\end{equation}
This follows from the continuity of jet composition and the continuity of $j^kT^\ell f$.
Thus, we have shown that $J^k_e(G,f)$ is Gateaux-$C^\ell$, as claimed.

The Fr\'echet-$C^\ell$ property of $J^k_e(G,f)$ is equivalent to uniform continuity of the function  \eqref{equ:TlJkGf} on bounded subsets with compact base, i.e., on subsets $B$ of the bundle $\pi:T^\ell J^k_e(G,G)^G\to J^k(G,G)^G$ such that $\pi(B)$ is compact and such that the vertical component of $B$ is uniformly bounded in any bundle chart.
Similarly to before, by commutation of the functors $T^\ell$ and $J^k_e(G,\cdot)$, this is equivalent to the uniform continuity of the function \eqref{equ:JkGTlf} on bounded subsets with compact base.
This is a consequence of the following facts:
jet composition is uniformly continuous on bounded subsets with compact base by \cref{lem:jet_composition},
and the map $j^kT^\ell f$ is uniformly continuous on bounded subsets with compact base because $f$ is $C^{k+\ell}$ in the sense of Fr\'echet.
\end{proof}

This concludes our investigation of right-invariant $C^k$ diffeomorphisms on $G$.
We next transfer these results to the group of $C^k$ elements in $G$ by identifying any $C^k$ element $x \in G^k$ with the left translation $\mu_x \in \Diff_{C^k}(G)^G$.

\begin{proof}[Proof of \cref{thm:Gk}]
Left-multiplication is a bijection
\[
G^k \ni x \mapsto \mu_x \in \Diff_{C^k}(G)^G,
\]
whose inverse is the evaluation map
\[
\Diff_{C^k}(G)^G \ni f \mapsto f(e) \in G^k.
\]
Via this bijection, multiplication in $G^k$ corresponds to composition in $\Diff_{C^k}(G)^G$, inversion in $G^k$ corresponds to inversion in $\Diff_{C^k}(G)^G$, and the inclusion $G^k\to G$ corresponds to the evaluation map $\Diff_{C^k}(G)^G\to G$ at $e \in G$.
Thus, by declaring the bijection to be a diffeomorphism, we obtain from \cref{lem:CkGGG_manifold,lem:CkGGG_topological_group,lem:Ckl} that $G^k$ is a Banach half-Lie group, which is smoothly included in $G$, and that $G^{k+\ell}$ is contained in $(G^k)^\ell$.
Moreover, the tangent space $T_eG^k$ is identified with the tangent space $T_e\Diff_{C^k}(G)^G$, i.e., with the space of right-invariant $C^k$ vector fields on $G$.
Under this identification, the right-invariant local addition on $G^k$ is obtained by restricting the right-invariant local addition on $G$. 
In the locally defined charts $\tau^{-1}:G\to T_eG$ and $(\tau,\tau)^{-1}\o(\pi,\tau):TG\to T_eG\times T_eG$, the map $\tau:TG\to G$ is continuous linear and restricts to a locally defined linear map $\tau:TG^k\to G^k$. 
By the closed graph theorem, the restriction $\tau:TG^k\to G^k$ is continuous linear in the local charts, hence smooth.
\end{proof}

\begin{proof}[Proof of \cref{lem:smooth_elements}]
$\Diff_{C^\infty}(G)^G$ is an ILB manifold because the charts and chart changes for $\Diff_{C^0}(G)^G$ constructed in \cref{lem:CkGGG_manifold} restrict to charts and chart changes for $\Diff_{C^k}(G)^G$, $k \in \mathbb N$. Multiplication $G^\infty\x G^\infty\to G^\infty$ is smooth because composition $\Diff_{C^\infty}(G)^G\x \Diff_{C^\infty}(G)^G\to \Diff_{C^\infty}(G)^G$ is smooth by convenient calculus \cite[Corollary~3.13]{KrieglMichor97}.
\end{proof}

\newcommand\Zbl[1]{Zbl~\href{https://zbmath.org/#1}{#1}}

\end{document}